\documentclass [11pt,] {article}
\usepackage{amsmath,amssymb}
\usepackage{amsthm,amsfonts,amscd,epsfig,lineno}
\usepackage{xcolor}
\usepackage{multirow}
\usepackage{cite,epic}
\usepackage{adjustbox}
\usepackage{graphicx}
\usepackage[left=2cm]{geometry}
\usepackage{lscape}
\usepackage{amsthm}
\usepackage{makecell}
\usepackage{float}
\usepackage{booktabs}
\usepackage{amsmath, amssymb, amsthm}

\newtheorem{theorem}{Theorem}[section]
\newtheorem{lemma}[theorem]{Lemma}

\newtheorem{corollary}[theorem]{Corollary}
\newtheorem{definition}[theorem]{Definition}
\newtheorem{example}[theorem]{Example}
\newtheorem{construction}[theorem]{Construction}
\usepackage{booktabs}
\usepackage{multirow}
\usepackage{makecell}
\usepackage{array}
\usepackage{float}
\usepackage{caption}
\newcount\refno
\usepackage{cite}
\numberwithin{equation}{section}

\title { On the possible cardinalities of quantum Latin squares
}

\author {   \footnotesize    Yuyuan Zhang, Mingzhen Lv,   Haitao Cao\footnote{Corresponding author. \; E-mail address: caohaitao@njnu.edu.cn.  }\\
       \scriptsize    School of Mathematical Sciences, Ministry of Education Key Laboratory for NSLSCS,\\ \scriptsize Nanjing Normal University, Nanjing 210023, China. }
\date{}

\begin {document}
\parindent=0.5cm
\baselineskip=0.6cm
\maketitle
 \begin {abstract}
\baselineskip=0.6cm
 In this paper, we show that for any integer \(v \ge 8\) with \(v \ne 9,11,23\), a quantum Latin square of order $v$ exists for every cardinality \(c \in [v,\,v^{2}] \setminus \{\,v+1\,\}\).

\noindent {\bf Keywords:} quantum Latin square, cardinality\\
\end{abstract}

\section{Introduction}
\label{intro}
Quantum Latin squares, introduced by Musto and Vicary~\cite{Mu2} as a quantum-theoretic generalization of classical Latin squares, form a class of combinatorial objects with deep connections to various structures in quantum information theory. These include unitary error bases (UEBs)~\cite{Mu2}, mutually unbiased bases (MUBs)~\cite{Mu1}, maximally entangled states (AME)~\cite{Goyeneche}, and $k$-uniform states~\cite{Zang2}.

A quantum  Latin square  of order $v$  is a   $v\times v$  square, denoted as $\text{QLS}(v)$, whose entries are unit column vectors from $v$-dimensional  Hilbert space $\mathcal{H}_{v}$, and such that each row and column forms an orthonormal basis. A $\text{QLS}(v)$ can be obtained by replacing each entry $i\in\{ 0,1,\dots,v-1\}$ in a classical Latin square with the computational basis vector $|i \rangle \in \mathcal{H}_{v}$, where $|i \rangle$ is a unit column vector with its $i$ component equal to $1$.  A $\text{QLS}(v)$ is called  \emph{classical}  if all entries  are constrained to the computational basis \( \{|0\rangle, |1\rangle, \dots, |v-1\rangle\} \).

In quantum theory, two unit vectors \( |\phi\rangle, |\psi\rangle \in \mathcal{H}_v \) represent the {\it same quantum state} ($|\phi\rangle= |\psi\rangle$, regarded as {\it identical})  if there exists a real number \( \theta \in[0,2\pi)\)   such that $$|\phi\rangle = e^{\mathrm{i}\theta} |\psi\rangle,$$
where $\mathrm{i}$ is the imaginary unit. Otherwise, they are considered   {\it distinct} ($|\phi\rangle\not= |\psi\rangle$). Paczos et al. \cite{Paczos} introduced  the cardinality $c$ of a $\text{QLS}(v)$ is the number of distinct vectors in the array. It is obvious that   \( v\leq  c \leq v^2 \).

A $\text{QLS}(v)$  is called \emph{apparently quantum} if $c=v$, or \emph{genuinely quantum} if \( v < c \leq v^2 \). Since an $\text{LS}(v)$ always exists, there is a classical $\text{QLS}(v)$ with cardinality $c=v$. Genuinely quantum Latin squares of order $2$ or $3$ cannot exist \cite{Paczos}.  For the maximal case, Nechita and Pillet \cite{Nechita} introduced the concept of quantum Sudoku as a special case of quantum Latin squares, and Paczos et al. \cite{Paczos} subsequently proved the existence of quantum Sudoku of order $v^2$ with  maximum cardinality $c=v^4$.  Then, Zhang and Cao et al. \cite{Zhang1,Zhang3} almost solved the existence of a \(\text{QLS}(v)\) with $c=v^2$ with 11 possible exceptions. Finally, Zang et al. \cite{Zang6} gave a complete solution to this problem for $v\geq 4$.

For the existence of a \(\text{QLS}(v)\) with \( v \leq c \leq v^2 \),   the possible cardinalities of  quantum Latin squares of order $4$ are $4$, $6$, $8$, and $16$ \cite{Paczos,Zhang1}, while those of order $5$ are  $5$, $7$, $12$, $21$, $24$, and $25$\cite{Zhang4}.
 Recently, Zhang and Ji \cite{Zhang2,Zhang5} obtained two infinite classes of $\text{QLS}(v)$s attaining all the possible cardinalities.

\begin{theorem}  {\rm (\cite{Zhang2,Zhang5})}\label{th:1.1}
 {\rm (1)} For any integer $u$, there does not exist a $\text{QLS}(u)$ with  $c=u+1$.\\
 {\rm (2)} For any integers $u\geq2$, $m\in\{4,6\}$ and \(c \in [mu,\,(mu)^{2}] \setminus \{\,mu+1\,\}\), there exists a $\text{QLS}(mu)$ with cardinality $c$.
\end{theorem}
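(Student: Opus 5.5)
Part (1) can be proved directly by an orthogonal-complement argument, while part (2) needs an explicit block construction; we treat them in that order.

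\textbf{Part (1).} Suppose a $\text{QLS}(u)$ has exactly $u+1$ distinct vectors, forming a set $S$ of quantum states. Every row is an orthonormal basis of $\mathcal{H}_u$. Its $u$ entries are pairwise orthogonal, hence pairwise distinct as states, so every row equals $S\setminus\{x\}$ for a unique $x\in S$.

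If all rows omit the same state $x$, then $x$ never occurs in the square, contradicting $|S|=u+1$. So two rows omit distinct states $x\neq y$. These two rows share the $u-1$ orthonormal vectors of $S\setminus\{x,y\}$. Their orthogonal complement is one-dimensional, and it contains both $y$ (the extra vector of the first row) and $x$ (the extra vector of the second row). Hence $x$ and $y$ differ only by a phase, i.e.\ they are the same state, a contradiction. No topology or counting beyond this is needed.

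\textbf{Part (2).} The plan is a block (inflation) construction. Write $v=mu$ and identify $\mathcal{H}_{mu}=\mathcal{H}_u\otimes\mathcal{H}_m$. Take a Latin square $L$ of order $u$ and view the $mu\times mu$ array as a $u\times u$ array of $m\times m$ blocks. In block $(i,j)$ place the entries $|L(i,j)\rangle\otimes Q_{ij}(r,s)$, where each $Q_{ij}$ is a $\text{QLS}(m)$. Rows and columns are then orthonormal bases automatically, since vectors in different blocks along a row or column lie in orthogonal subspaces $|k\rangle\otimes\mathcal{H}_m$.

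Blocks carrying different symbols contribute disjoint sets of states. The $u$ blocks carrying a common symbol $k$ contribute the union of the state sets of the $Q_{ij}$, so the cardinality is
\[
c=\sum_{k}\Bigl|\bigcup_{L(i,j)=k}\mathrm{St}(Q_{ij})\Bigr| .
\]
To control the unions I would take each $Q_{ij}=U_{ij}Q$ for a fixed $\text{QLS}(m)$ $Q$ and unitaries $U_{ij}$. Choosing $U_{ij}$ equal to the identity, equal to a permutation of the computational basis, or generic lets me make the state sets of blocks coincide, partially overlap, or be disjoint. This turns the problem into an additive one: express every integer in $[mu,(mu)^2]\setminus\{mu+1\}$ as a sum over the $u$ symbol classes of achievable union sizes.

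First I would tabulate, for a single symbol class of $u$ blocks of order $m$, which union sizes are achievable. For $m=4$ the base cardinalities are $4,6,8,16$; for $m=6$ I would use the corresponding list, which must first be established, presumably by similar small constructions. I would also use a second layer of flexibility: genuinely quantum blocks that mix two symbols. That is, I would replace a $2\times2$ sub-array of blocks forming an intercalate of $L$ by a $\text{QLS}(2m)$ on $\mathcal{H}_2\otimes\mathcal{H}_m$, which fills the small increments the pure product misses.

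The main obstacle is this number-theoretic covering. The near-bottom values $mu+2,\dots,mu+5$ and the values just below $(mu)^2$ force very specific local modifications. These include a $\text{QLS}(4)$ with $c=6$ in one block and copies elsewhere, and for the top end the $c=(mu)^2$ construction with a few vectors forced equal. I would handle these ends by explicit ad hoc blocks. The middle range I would handle by induction on the number of symbol classes that have been made ``generic'', showing that consecutive achievable totals never differ by more than the smallest available increment.
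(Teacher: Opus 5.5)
Your proof of part (1) is correct and complete. Each row is $S$ minus one state. Two rows omitting $x\neq y$ share $u-1$ orthonormal states, so both $x$ and $y$ lie in the same one-dimensional orthogonal complement and hence are the same state. The paper only cites this part, so your argument is a self-contained addition.

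Part (2), however, is a plan rather than a proof, and it stops where the real work begins. The framework is right: it is the paper's direct product (Construction~\ref{con:2}). Your refinement of choosing $Q_{ij}$ block by block, with $c=\sum_k|\bigcup_{L(i,j)=k}Q_{ij}|$, is exactly what the ``$u$-fold sums'' in Lemma~\ref{lem:4.2} tacitly require. But consider the tools that work without an explicit $Q$:
\begin{itemize}
\item repeating a block;
\item a generic unitary to make blocks disjoint;
\item a single common vector as in Corollary~\ref{cor:2.11}.
\end{itemize}
Corollary~\ref{cor:2.12} needs order at least $5$. With these tools alone, two $\text{QLS}(4)$ blocks in one symbol class have union sizes only in $[4,32]\setminus\{5,17,18,25,\dots,30\}$. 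For $v=8$ this means $49$ and $57,\dots,61$ are not sums of two attainable values. Your intercalate trick does not help, since there it would require a $\text{QLS}(8)$, which is circular.

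Since a $\text{QLS}(4)$ only has cardinality $4,6,8$ or $16$, these values force specific overlaps of sizes $2$ to $7$. The paper builds them explicitly in Lemma~\ref{lem:4.1}:
\begin{itemize}
\item a $16$-element $B_0$ and a $4$-element $B_1$ meeting exactly in $\{|0\rangle,|1\rangle\}$;
\item unitaries $U_{25},U_{27},U_{28},U_{29},U_{30}$ with $|B_0\cap U_tB_0|=7,5,4,3,2$, where $U_{29},U_{30}$ are rotations rather than permutations;
\item $B_0\cup U_{28}B_0\cup U_{30}B_0$ of size $42$.
\end{itemize}
Only with these does the additive covering become the short induction giving $[4,16u]\setminus\{5\}$ and hence $[4u,16u^2]\setminus\{4u+1\}$. ``Explicit ad hoc blocks'' to be found later is not a substitute for them.

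For $m=6$ the gap is wider. When $u$ is even, $6u=4\cdot\frac{3u}{2}$ reduces to $m=4$, which you do not observe. When $u$ is odd, your route needs $\text{QLS}(6)$s with many cardinalities and controlled overlaps, and you supply none. The complete list of $\text{QLS}(6)$ cardinalities is not even known (Table~\ref{tab:5}).

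With only the cardinalities $6,8,9,36$ of Example~\ref{ex:1} and overlaps $0,1,2$, no symbol class of three blocks has union size in $[82,103]$. So for $v=18$ every $c\in[298,311]$ is missed, and an order-$3$ Latin square has no intercalates to repair this.

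The paper avoids $\text{QLS}(6)$ blocks entirely. Since $6u\equiv 2\pmod 4$ for $u$ odd, it uses Lemma~\ref{lem:4.6}. That lemma is a singular direct product built from $\text{QLS}(4)$ blocks, $\text{IQLS}(5,1)$s obtained from $\text{QLS}(5)$s, and a $\text{QLS}(2)$. A direct product of two maximal $\text{QLS}(3u)$s then handles the top values.
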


For further reading on quantum theory and quantum Latin squares, we recommend \cite{Claeys,Li,Mu3,Nielsen,Rather1,Rather2,Rather3,Reuttera,Zang1,Zang3,Zang4,Zang5,Zuo,Zyc}. In this paper, we investigate all possible cardinalities of quantum Latin squares. As the main result, we are to prove the following theorem.

\begin{theorem}\label{th:1.2}
For any integer \(v \ge 8\) with \(v \notin \{9,11,23\}\), and for every integer \(c \in [v, v^{2}] \setminus \{v+1\}\), there exists a \(\text{QLS}(v)\) with cardinality \(c\).
\end{theorem}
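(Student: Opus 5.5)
Theorem~\ref{th:1.1}(2) already covers every $v=mu$ with $m\in\{4,6\}$ and $u\ge 2$, that is, every $v\equiv 0\pmod 4$ with $v\ge 8$ and every $v\equiv 0 \pmod 6$ with $v\ge 12$. I would therefore use a \emph{recursive block construction} to handle the remaining orders, in particular all odd $v$ and $v\equiv 2\pmod 4$ not divisible by $6$.

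\textbf{Building block.} Take a classical Latin square $L$ of order $n$ whose cells are partitioned into subsquares, or more generally a Latin square with a suitable block structure. Typical choices are $n=k\cdot t$ with $L$ the direct product of an $\text{LS}(k)$ and an $\text{LS}(t)$, or a Latin square of order $n$ with disjoint holes or subsquares of orders $t_1,\dots,t_s$ summing to $n$. Replace each subsquare of order $t_i$ on symbol set $S_i$ by a $\text{QLS}(t_i)$ living in $\mathrm{span}\{|j\rangle : j\in S_i\}$. This preserves the row and column orthonormality: rows meeting different subsquares use orthogonal subspaces, and inside a subsquare the row is an orthonormal basis of that subspace. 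The key point is that subsquares with pairwise disjoint symbol sets contribute disjoint vector sets. Hence the cardinality of the result is
\[
\text{cardinality of the result} = \sum_i c_i + (\text{number of classical symbols outside the replaced subsquares}),
\]
where $c_i$ is the cardinality of the $i$-th inserted $\text{QLS}(t_i)$. A further source of flexibility is a unitary twist: apply a $2\times 2$ or larger unitary to an intercalate or subsquare region, which adds a small controlled number of new vectors.

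\textbf{Key steps.}
\begin{enumerate}
\item Write $v=kt$, or $v=\sum t_i$ via a Latin square with $k$ disjoint subsquares; for $v$ prime one may need $v = 4a+b$ type decompositions, using incomplete Latin squares or transversal designs.
\item Use the known cardinality spectra of small orders: order $4$ gives $\{4,6,8,16\}$, order $5$ gives $\{5,7,12,21,24,25\}$, and the orders from Theorem~\ref{th:1.1}. Show that the attainable sums cover $[v,v^2]\setminus\{v+1\}$.
\item Fill the gaps near the top of the interval, close to $v^2$, separately. There the product construction $\text{QLS}(k)\otimes\text{QLS}(t)$-type arguments, with entangled (non-product) bases in each cell, give cardinalities up to $k^2t^2$. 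Fine-tuning by keeping some cells in product form lowers the count by arbitrary amounts. The maximal case $c=v^2$ is the known result of Zang et al.
\item Handle small $v$ (such as $10,13,14,\dots$) by direct computer-aided or explicit constructions.
\end{enumerate}

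\textbf{Main obstacle.} I expect the hardest part to be the upper region $c\in(v^2-\text{small},\,v^2]$ for orders $v$ with no convenient factorization, especially primes. Block sums can only reach about $\sum t_i^2$, which falls well short of $v^2$. So one needs a genuinely global modification: start from a $\text{QLS}(v)$ with $c=v^2$ and locally "classicalize" sub-blocks to reduce the cardinality by each needed amount. I would first try to realise the $c=v^2$ squares of Zang et al.\ so that they contain a subsquare-like substructure that can be swapped out. The excluded orders $9,11,23$ presumably are exactly where such decompositions fail, and would be left open.
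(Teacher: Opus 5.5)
\textbf{There is a genuine gap.} It is the one you flag yourself as the ``main obstacle'': for orders $v$ that are not multiples of $4$ or $6$ (all odd $v$, in particular primes), you give no construction that reaches the upper part of $[v,v^2]$.

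Replacing subsquares with pairwise disjoint symbol sets tops out near $\sum t_i^2+v$. Your proposed remedy, starting from the cardinality-$v^2$ squares of Zang et al.\ and ``classicalizing'' sub-blocks, is not an argument. Nothing shows those squares contain a swappable substructure, let alone one whose modification lowers the count by every required amount.

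The idea you are missing is the paper's singular direct product (Construction~\ref{con:1}).
\begin{itemize}
\item Write $v=mn+h$ and take a classical $\text{QLS}(n)$ with $n$ disjoint transversals.
\item An occurrence of a symbol $e$ in one of $n-h$ transversals becomes a block $|e\rangle\otimes_+B_q$.
\item An occurrence in one of the remaining $h$ transversals becomes $|e\rangle\otimes_r C_r$, where $C_r$ is an $\text{IQLS}(m+1,1)$ whose border row and column go into the extra row and column $mn+r$.
\item A $\text{QLS}(h)$ fills the corner.
\end{itemize}
Blocks sharing a symbol subspace lie in distinct rows and columns, so they can carry \emph{different} squares. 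Taking the $B_q$ and $C_r$ pairwise disjoint of maximal size, Lemma~\ref{lem:3.6} gives $n\bigl((n-h)m^2+h(m^2+2m)\bigr)+h^2=(mn+h)^2$. So $v^2$ is reached with no factorization of $v$ and no appeal to Zang et al.

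\textbf{The second missing ingredient is exact control of the count.} Your formula $\sum_i c_i+\cdots$ holds only when the inserted squares live on disjoint symbol sets. Once several blocks share a symbol subspace, the cardinality is the size of a \emph{union} of several QLSs, and you need a way to prescribe their intersections. This already happens in the $\text{LS}(k)\times\text{LS}(t)$ product of your Step~1, and it is unavoidable if you want to exceed $\sum t_i^2$.

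The paper gets this control from two sources:
\begin{itemize}
\item the generic-unitary Lemmas~\ref{lem:2.3}, \ref{lem:2.6} and \ref{lem:2.9}, which use $u_n(x)$ with exponents $\tfrac{4^k-1}{3}$ and give squares that are disjoint or meet exactly in $\{|n\rangle\}$ or $\{|n\rangle,|n+1\rangle\}$;
\item explicit overlaps such as $U_tB_0$ in Lemma~\ref{lem:4.1}.
\end{itemize}
Together these yield the union spectra of Lemmas~\ref{lem:4.1}, \ref{lem:4.20}, \ref{lem:4.3}, \ref{lem:4.4} and \ref{lem:4.21}, from which the intervals are assembled arithmetically.

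Your phrases ``fine-tuning by keeping some cells in product form lowers the count by arbitrary amounts'' and ``show that the attainable sums cover $[v,v^2]\setminus\{v+1\}$'' stand in for exactly this content. Step~4 (orders $10,13,14,15,19,\dots$) is also left undone; the paper settles these by the specific parameter choices in Tables~\ref{tab:4u+1}--\ref{tab:4u+7}.
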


\section{Preliminaries}

\subsection{A Parameterized Vector}
\begin{definition}
For each positive integer $k$, define $f_k(x)=x^{\frac{4^k-1}{3}}$, $x \in \bigl(0,\tfrac{1}{2}\bigr)$. For  $n \ge 2$, define
\[
g_n(x)=\sqrt{\,1-\sum_{k=1}^{n-1} f_k^2(x)\,},
\]
and
\[
u_n(x)=\bigl(f_1(x),f_2(x),\dots,f_{n-1}(x),g_n(x)\bigr)^{\top}.
\]
\end{definition}
It is straightforward to verify that \(f_k(x)\in\mathbb{R}\) and \(g_n(x)\in\mathbb{R}\). Moreover, \(u_n(x)\) is a unit vector in \(\mathbb{R}^n\), and the functions \(\{f_k(x)\}_{k\ge 1}\) satisfies the following properties.

\begin{lemma}\label{lem:2.2}
(1) If $f_i(x)f_j(x)=f_p(x)f_q(x)$ holds for some positive integers $i,j,p,q$, then $\{i,j\}=\{p,q\}$.\\
(2) There  do not exist  positive integers \(k,i,j,p,q\) such that $f_k^2(x)=f_i(x)f_j(x)f_p(x)f_q(x)$.
\end{lemma}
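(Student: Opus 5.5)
Write $e_k=\frac{4^k-1}{3}$, so $f_k(x)=x^{e_k}$ and $e_1=1,\ e_2=5,\ e_3=21,\dots$. Since $x\in(0,\tfrac12)$ is positive and not equal to $1$, two monomials $x^a$ and $x^b$ agree if and only if $a=b$. So both parts reduce to statements about the integers $e_k$.

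Part (1) becomes: $e_i+e_j=e_p+e_q$ implies $\{i,j\}=\{p,q\}$. Part (2) becomes: $2e_k=e_i+e_j+e_p+e_q$ has no solution in positive integers. The plan is to clear denominators using $3e_k+1=4^k$, which turns each identity into one about powers of $4$, and then use uniqueness of base-$4$ representations.

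\textbf{Part (1).} Multiply by $3$ and add $2$ to get
\[
4^i+4^j=4^p+4^q .
\]
Assume $i\le j$ and $p\le q$.
\begin{itemize}
\item If $i<j$, the left side has exactly two nonzero base-$4$ digits, each equal to $1$, in positions $i$ and $j$.
\item If $i=j$, the left side is $2\cdot 4^i$, a single digit $2$.
\end{itemize}
Base-$4$ representation is unique, so the digit pattern on the right must be the same. This forces $(p,q)=(i,j)$, which gives $\{i,j\}=\{p,q\}$.

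\textbf{Part (2).} Multiply by $3$ to get $2(4^k-1)=4^i+4^j+4^p+4^q-4$, that is,
\[
2\cdot 4^k+2=4^i+4^j+4^p+4^q .
\]
The right side is a sum of four powers of $4$ with all exponents at least $1$, so it is divisible by $4$. The left side satisfies $2\cdot 4^k+2\equiv 2 \pmod 4$ because $k\ge1$. This is a contradiction, so no solution exists.

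This parity argument is quick and avoids any case analysis on carries. The only point that needs care is positivity of the indices. It gives $4\mid 4^i$ for every index, and for $k\ge1$ it also gives $4\mid 2\cdot4^k$, so the left side really is $\equiv 2 \pmod 4$.

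The main obstacle is in Part (1): the digit argument for the case $i=j$, where the sum carries no extra digits. It is handled directly because $2<4$, so $2\cdot4^i$ is still a single base-$4$ digit. Everything else is routine.
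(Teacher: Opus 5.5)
Your proof is correct and complete. The paper states this lemma without proof, so there is no argument of the authors to compare against; your write-up supplies the verification they leave implicit.

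The reduction to exponents is legitimate. Since $x\in(0,\tfrac12)$ gives $x>0$ and $x\neq 1$, the equality $x^a=x^b$ forces $a=b$ even at a single point. The identity $3e_k+1=4^k$ then turns part (1) into $4^i+4^j=4^p+4^q$. Uniqueness of base-$4$ expansions settles this, including the case $i=j$, where the sum is the single digit $2$. Part (2) becomes $2\cdot 4^k+2=4^i+4^j+4^p+4^q$, which fails modulo $4$.

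You can also phrase part (2) without clearing denominators. Since $e_k=1+4+\cdots+4^{k-1}\equiv 1 \pmod 4$ for every $k\ge 1$, you get $e_i+e_j+e_p+e_q\equiv 0 \pmod 4$ while $2e_k\equiv 2\pmod 4$.

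These two facts are exactly what the proof of Lemma~\ref{lem:2.3} needs to isolate the coefficient of $f_k^2(x)$ in Equation~\eqref{eq:2.4}. There the competing monomials are products of two or of four of the $f$'s.
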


\begin{lemma}\label{lem:2.3}
Let $n \ge 3$ and let $\alpha = (a_{1}+ b_{1}\mathrm{i},\, a_{2}+ b_{2}\mathrm{i},\,\dots,\, a_{n}+ b_{n}\mathrm{i})^{\top}\in\mathcal{H}_n$ be a nonzero vector, where $a_k,b_k\in\mathbb{R}$ for $k\in\{1,2,\dots,n\}$. For any $c\ge0$, the equation $|(u_n(x),\alpha)| = c$ has at most finitely many solutions in $x\in (0,\tfrac{1}{2})$.
\end{lemma}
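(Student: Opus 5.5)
The plan is to show that $h(x):=|(u_n(x),\alpha)|^2-c^2$ is a non-identically-zero real-analytic function on an open interval containing $[0,\tfrac12]$. Compactness then gives only finitely many zeros in $(0,\tfrac12)$. Write $\alpha_k=a_k+b_k\mathrm{i}$ and $e_k=\frac{4^k-1}{3}$, so that $f_k(x)=x^{e_k}$ with $e_k$ a positive integer whose base-$4$ expansion consists of exactly $k$ digits equal to $1$.

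\emph{Analyticity.} Set $S(x)=\sum_{k=1}^{n-1}x^{2e_k}$, a polynomial. At $x=\tfrac12$ we have $S(\tfrac12)\le \tfrac14+\sum_{k\ge2}2^{-2e_k}<1$, and $S$ is increasing on $[0,\infty)$. Hence $1-S>0$ on some interval $(-\varepsilon,\tfrac12+\varepsilon)$, and $g_n=\sqrt{1-S}$ is real-analytic there. Expanding the modulus gives
\[
|(u_n,\alpha)|^2=|\alpha_n|^2(1-S)+\Bigl|\sum_{k<n}\alpha_kf_k\Bigr|^2+2g_n\sum_{k<n}\mathrm{Re}(\bar\alpha_n\alpha_k)f_k .
\]
This is analytic on the same interval. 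So if $h\not\equiv0$, its zeros in the compact set $[0,\tfrac12]$ are finite, and the lemma follows. It remains to show $|(u_n,\alpha)|^2$ is not constant in $x$; this is the main obstacle.

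\emph{Case 1: some $\mathrm{Re}(\bar\alpha_n\alpha_k)\neq0$.} Let $k_0$ be the least such index and expand at $x=0$, using $g_n=1+(\text{power series in } S)$. The cross term contributes $2\mathrm{Re}(\bar\alpha_n\alpha_{k_0})x^{e_{k_0}}$. Every other monomial arising has one of the following exponents:
\begin{itemize}
\item $e_k$ with $k>k_0$, which exceeds $e_{k_0}$;
\item $e_k+$ (a nonempty sum of $2e_j$'s), which also exceeds $e_{k_0}$ when $k\ge k_0$;
\item $2e_j$ or $e_i+e_j$.
\end{itemize}
The last kind never equals $e_{k_0}$. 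Indeed, adding two base-$4$ numbers whose digits are $0$ or $1$ produces no carries, so such a sum contains a digit $2$, whereas $e_{k_0}$ has only digits $1$; this is the same mechanism as Lemma~\ref{lem:2.2}. Thus the coefficient of $x^{e_{k_0}}$ is nonzero, and $h$ is nonconstant.

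\emph{Case 2: all $\mathrm{Re}(\bar\alpha_n\alpha_k)=0$ (including $\alpha_n=0$).} Now $|(u_n,\alpha)|^2$ is the polynomial $|\alpha_n|^2(1-S)+|\sum_{k<n}\alpha_kx^{e_k}|^2$. By Lemma~\ref{lem:2.2}(1), the exponents $2e_j$ and $e_i+e_j$ $(i<j)$ are pairwise distinct, so each coefficient can be read off separately. The coefficient of $x^{2e_j}$ is $|\alpha_j|^2-|\alpha_n|^2$, and that of $x^{e_i+e_j}$ with $i\neq j$ is $2\mathrm{Re}(\bar\alpha_i\alpha_j)$. If the polynomial were constant, all $|\alpha_k|$ would be equal for $1\le k\le n$. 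Moreover, viewing each $\alpha_k$ as a vector in $\mathbb{R}^2$, these $n$ vectors would be pairwise orthogonal, since we also have $\mathrm{Re}(\bar\alpha_n\alpha_k)=0$. Because $n\ge3$, at most two of them can be nonzero, and their common norm forces all of them to vanish, contradicting $\alpha\neq0$. This is exactly where the hypothesis $n\ge3$ enters.
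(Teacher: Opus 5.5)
Your proof is correct, but it takes a different route from the paper's. The paper stays entirely within polynomials. It first rotates the phase so that $b_n=0$, isolates the $g_n$-term and squares, which gives \eqref{eq:2.3}, where $g_n$ enters only through $g_n^2=1-\sum f_k^2$. Infinitely many roots force this polynomial to vanish identically. Comparing coefficients then gives $a_n^2=c^2$ from the constant term, $a_n^2a_k^2=0$ from the coefficient of $f_k^2$ (using Lemma~\ref{lem:2.2}(2)), and $|\alpha_k|=|\alpha_n|$ and $\mathrm{Re}(\bar\alpha_1\alpha_2)=0$ from \eqref{eq:2.6}, whence $\alpha=0$. You instead keep the square root: you check that $1-S>0$ on a neighbourhood of $[0,\tfrac12]$, so $g_n$ is real-analytic there, and conclude via the identity theorem plus compactness, reading non-constancy off the Taylor coefficients at $0$. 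Your Case~1 is what replaces the squaring step. After its phase normalization, the paper's condition $a_na_k=0$ is exactly your $\mathrm{Re}(\bar\alpha_n\alpha_k)=0$; the paper extracts it from a degree-four identity, whereas you detect a nonzero cross term directly as the coefficient of $x^{e_{k_0}}$. Parity alone suffices there: every $e_k\equiv 1 \pmod 4$ is odd, while $0$, $2e_j$ and $e_i+e_j$ are even, so you need neither the base-$4$ carry argument nor Lemma~\ref{lem:2.2}(2). Your Case~2 then coincides with the end of the paper's argument: three pairwise orthogonal vectors of equal length in $\mathbb{R}^2$ must vanish, which is exactly where $n\ge 3$ is used. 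As for the trade-off, the paper needs only ``a nonzero polynomial has finitely many zeros'', at the cost of a WLOG phase rotation and untangling a quartic identity. Your argument needs real-analyticity and the small check at $x=\tfrac12$, but it is phase-invariant and yields the stronger, $c$-independent statement that $|(u_n(x),\alpha)|$ is never constant in $x$.
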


\begin{proof}  Since $|(u_n(x),\alpha)|=|(u_n(x),\frac{a_n - b_n \mathrm{i}}{\sqrt{a_n^2 + b_n^2}}\,\alpha)|$ when $(a_n,b_n)\not=(0,0)$,  we may assume that $b_n=0$. So
\[
(u_n(x),\alpha) = \Bigl(\sum_{k=1}^{n-1} a_k f_k(x) + a_n g_n(x)\Bigr) + \Bigl(\sum_{k=1}^{n-1} b_k f_k(x)\Bigr)\mathrm{i}.
\]
Hence
\begin{equation}\label{eq:2.1}
|(u_n(x),\alpha)|^2
=\Bigl(\sum_{k=1}^{n-1} a_k f_k(x) + a_n g_n(x)\Bigr)^2 + \Bigl(\sum_{k=1}^{n-1} b_k f_k(x)\Bigr)^2.
\end{equation}
Assume that the equation $|(u_n(x),\alpha)|^2 = c^2$ admits infinitely many solutions in  $x \in (0, \tfrac{1}{2})$.

By Equation~\eqref{eq:2.1}, we obtain
\begin{equation}\label{eq:2.2}
\Bigl(\sum_{k=1}^{n-1} a_k f_k(x)\Bigr)^2
+
\Bigl(\sum_{k=1}^{n-1} b_k f_k(x)\Bigr)^2
+
a_n^2 g_n^2(x)
- c^2
=
-2 a_n g_n(x)\sum_{k=1}^{n-1} a_k f_k(x).
\end{equation}
Squaring both sides of Equation~(\ref{eq:2.2}) and using \(g_n^2(x) =  1 - \sum_{k=1}^{n-1} f_k^2(x)  \), we get
\begin{equation}\label{eq:2.3}
\begin{aligned}
&\Biggl[
\Bigl(\sum_{k=1}^{n-1} a_k f_k(x)\Bigr)^2
+ \Bigl(\sum_{k=1}^{n-1} b_k f_k(x)\Bigr)^2
+ a_n^2 \Bigl(1 - \sum_{k=1}^{n-1} f_k^2(x)\Bigr) - c^2
\Biggr]^2 \\
&\qquad
- 4 a_n^2 \Bigl(1 - \sum_{k=1}^{n-1} f_k^2(x)\Bigr)
\Bigl(\sum_{k=1}^{n-1} a_k f_k(x)\Bigr)^2
= 0,
\end{aligned}
\end{equation}
which has infinitely many solutions for $x \in (0, \tfrac{1}{2})$.

After substituting \(f_k(x)=x^{\frac{4^k-1}{3}}\), the left-hand side of Equation~\eqref{eq:2.3} reduces to a finite linear combination of distinct powers of \(x\). Since a nonzero polynomial has only finitely many zeros on any interval, the existence of infinitely many solutions \(x\in(0,\tfrac12)\) implies that the resulting polynomial vanishes identically. Comparing the constant term in Equation~\eqref{eq:2.3}, we obtain \(a_n^2=c^2\). A further simplification of Equation~(\ref{eq:2.3}) yields
\begin{equation}\label{eq:2.4}
\begin{aligned}
&\Biggl[
\Bigl(\sum_{k=1}^{n-1} a_k f_k(x)\Bigr)^2
+ \Bigl(\sum_{k=1}^{n-1} b_k f_k(x)\Bigr)^2
- a_n^2 \sum_{k=1}^{n-1} f_k^2(x)
\Biggr]^2 \\
&\qquad
- 4 a_n^2 \Bigl(1 - \sum_{k=1}^{n-1} f_k^2(x)\Bigr)
\Bigl(\sum_{k=1}^{n-1} a_k f_k(x)\Bigr)^2
= 0.
\end{aligned}
\end{equation}

For any \(k \in \{1,2,\ldots,n-1\}\), by Lemma~\ref{lem:2.2}, we consider the coefficients of \(f_k^2(x)\) in Equation~\eqref{eq:2.4}, and obtain
\begin{equation}\label{eq:2.5}
 4 a_n^{2} a_k^{2} = 0.
\end{equation}
Then Equation~\eqref{eq:2.4} simplifies to
\begin{equation}\label{eq:2.6}
\Bigl(\sum_{k=1}^{n-1} a_k f_k(x)\Bigr)^2
+ \Bigl(\sum_{k=1}^{n-1} b_k f_k(x)\Bigr)^2
- a_n^2    \sum_{k=1}^{n-1} f_k^2(x)= 0.
\end{equation}
For \(n \ge 3\), by comparing the coefficients of the terms \(f_1(x)f_2(x)\) and \(f_k^2(x)\) in Equation~\eqref{eq:2.6}, we obtain
\begin{equation}\label{eq:2.7}
\begin{cases}
2(a_1a_2+b_1b_2)=0,\\[4pt]
a_k^{\,2}+b_k^{\,2}-a_n^{\,2}=0, \qquad k  \in \{1,2,\ldots,n-1\}.
\end{cases}
\end{equation}

Considering Equation~\eqref{eq:2.5},  it follows that either $a_n=0$ or $a_k=0$. If $a_n=0$, then the second equation in~\eqref{eq:2.7} implies $a_k=b_k=0$. If $a_k=0$, then the two equations in~\eqref{eq:2.7} reduce to $b_1b_2=0$ and $b_k^{\,2}-a_n^{\,2}=0$.
Then we conclude that $a_n=0$, and  $b_k=0$ for  $k \in \{1,2,\ldots,n-1\}$. Hence,  $\alpha$ is a zero vector, which leads to a contradiction. Therefore,  the equation $|(u_n(x),\alpha)| = c$ admits at most finitely many solutions in $x\in (0,\tfrac{1}{2})$.
\end{proof}

\subsection{Families of Quantum Latin Squares}

From now on, when we say that $A=(|a_{ij}\rangle)$ is a $\text{QLS}(n)$, then it implies that
$|a_{i,j}\rangle\in\operatorname{span}\{|0\rangle,|1\rangle,\ldots,|n-1\rangle\}$ for all $i,j\in\{0,1,\ldots,n-1\}$. For convenience, we identify \(A\) with the set of its distinct elements, and denote its cardinality by \(|A|\).

\begin{lemma}\label{lem:2.20}
Let \(A=(|a_{i,j}\rangle)\) be a \(\text{QLS}(n)\), and let \(U\) be an \(n\times n\) unitary matrix. Define \[ UA=\bigl(U|a_{i,j}\rangle\bigr). \] Then \(UA\) is also a \(\text{QLS}(n)\) and  $|UA|=|A|$. \end{lemma}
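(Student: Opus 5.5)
The plan is to check the two defining properties of a $\text{QLS}(n)$ for $UA$ directly from the fact that $U$ is unitary, and then to show that $U$ induces a bijection on distinct quantum states, so the cardinality is unchanged.

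\emph{Step 1: unit vectors.} Each entry $U|a_{i,j}\rangle$ lies in $\mathcal{H}_n=\operatorname{span}\{|0\rangle,\dots,|n-1\rangle\}$, because $U$ is an $n\times n$ matrix acting on $\mathcal{H}_n$. Since $U^\dagger U=I$, we have
\[
(U|\phi\rangle,U|\psi\rangle)=(|\phi\rangle,U^\dagger U|\psi\rangle)=(|\phi\rangle,|\psi\rangle)
\]
for all $|\phi\rangle,|\psi\rangle$. Taking $|\phi\rangle=|\psi\rangle=|a_{i,j}\rangle$ shows that every entry of $UA$ is again a unit vector.

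\emph{Step 2: orthonormal rows and columns.} Fix a row $i$. The vectors $|a_{i,0}\rangle,\dots,|a_{i,n-1}\rangle$ form an orthonormal basis of $\mathcal{H}_n$. Inner products are preserved by Step 1, so $U|a_{i,0}\rangle,\dots,U|a_{i,n-1}\rangle$ are $n$ orthonormal vectors in an $n$-dimensional space, and hence they form an orthonormal basis. The same argument applies to each column. Therefore $UA$ is a $\text{QLS}(n)$.

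\emph{Step 3: cardinality.} Recall that two vectors are identified exactly when they differ by a global phase. Suppose $|\phi\rangle=e^{\mathrm{i}\theta}|\psi\rangle$. By linearity, $U|\phi\rangle=e^{\mathrm{i}\theta}U|\psi\rangle$. Conversely, suppose $U|\phi\rangle=e^{\mathrm{i}\theta}U|\psi\rangle$. Applying $U^{-1}=U^\dagger$ gives $|\phi\rangle=e^{\mathrm{i}\theta}|\psi\rangle$. So $|a_{i,j}\rangle$ and $|a_{k,l}\rangle$ are the same state if and only if $U|a_{i,j}\rangle$ and $U|a_{k,l}\rangle$ are the same state. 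Hence the map $|a\rangle\mapsto U|a\rangle$ induces a bijection between the distinct states of $A$ and those of $UA$, and therefore $|UA|=|A|$.

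All three steps are routine. The only point needing care is Step 3: the argument must use the invertibility of $U$, not merely the preservation of inner products, to show that distinct states cannot merge into a single state.
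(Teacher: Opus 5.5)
Your proof is correct and complete. The paper states this lemma without proof, treating it as immediate, and your three checks (unit vectors, orthonormal rows and columns, and a bijection on phase classes) are the routine verification it relies on. One small correction to your closing remark: preservation of inner products alone already suffices for Step 3, because $U|\phi\rangle=e^{\mathrm{i}\theta}U|\psi\rangle$ gives $\bigl|(|\phi\rangle,|\psi\rangle)\bigr|=\bigl|(U|\phi\rangle,U|\psi\rangle)\bigr|=1$, and equality in Cauchy--Schwarz forces $|\phi\rangle$ and $|\psi\rangle$ to differ by a phase, so invertibility is not separately needed.
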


\begin{lemma}\label{lem:2.4}
Let $|a\rangle = (a_0, a_1, \ldots, a_{n-1})^{\top}, |b\rangle = (b_0, b_1, \ldots, b_{n-1})^{\top} \in \mathcal{H}_{n}$ be unit vectors. If there exists $i \in \{0,1,\ldots,n-1\}$ such that $|a_i| \neq |b_i|$, then $|a\rangle\not=|b\rangle$.
\end{lemma}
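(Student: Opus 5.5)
Lemma~\ref{lem:2.4} is the contrapositive of the definition of identical states, so the plan is to assume $|a\rangle=|b\rangle$ as quantum states and compare absolute values coordinatewise. By the definition in the Introduction, $|a\rangle=|b\rangle$ means there is a real number $\theta\in[0,2\pi)$ with $|a\rangle=e^{\mathrm{i}\theta}|b\rangle$. Comparing the $i$-th coordinates gives $a_i=e^{\mathrm{i}\theta}b_i$ for every $i\in\{0,1,\ldots,n-1\}$.

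Next, take moduli. Since $|e^{\mathrm{i}\theta}|=1$ and the modulus is multiplicative, we get
\[
|a_i|=|e^{\mathrm{i}\theta}|\,|b_i|=|b_i| \qquad \text{for all } i\in\{0,1,\ldots,n-1\}.
\]
This contradicts the hypothesis that $|a_i|\neq|b_i|$ for some index $i$. Hence no such $\theta$ exists, and $|a\rangle\neq|b\rangle$ in the sense of distinct quantum states.

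No step here is a real obstacle; the argument is a direct unwinding of the definition of identical states. The only point to be careful about is reading ``$\neq$'' as distinctness modulo a global phase, not just inequality as vectors. The unit-norm hypothesis is not needed for the argument; it only ensures that both vectors are states.
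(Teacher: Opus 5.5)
Your proof is correct and is essentially the same as the paper's: both assume $|a\rangle=e^{\mathrm{i}\theta}|b\rangle$ and take moduli coordinatewise to obtain $|a_i|=|b_i|$ for all $i$, which contradicts the hypothesis. Your remark that the unit-norm assumption plays no role in the argument is also accurate.
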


\begin{proof} Assume that   there exists  a  \( \theta \in[0,2\pi)\)  such that $|a\rangle = e^{\mathrm{i}\theta}\,|b\rangle$. Then for any $i \in \{0,1,\ldots,n-1\}$, we have $|a_i| = |e^{\mathrm{i}\theta} b_i|= |b_i|$, which is a contradiction.
\end{proof}

\begin{lemma}\label{lem:2.5}
Let \(A\) be a \(\text{QLS}(n)\) whose first row is the computational basis \(\{|0\rangle,|1\rangle,\ldots,|n-1\rangle\}\). Then there exist two distinct integers \(i,j\in\{0,1,\ldots,n-1\}\) such that  no entry of \(A\) is of the form \(\alpha|i\rangle+\beta|j\rangle\) with \(\alpha\beta\neq0\).
\end{lemma}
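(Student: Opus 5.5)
The plan is to argue by contradiction and extract a contradiction from the completeness relations of rows and columns. Suppose that for every pair $\{i,j\}$ with $i\neq j$ some entry of $A$ has the form $\alpha|i\rangle+\beta|j\rangle$ with $\alpha\beta\neq0$; call such an entry a \emph{$\{i,j\}$-entry}.

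\textbf{Step 1 (basic constraints).} Since row $0$ is the computational basis and column $k$ is orthonormal, column $k$ starts with $|k\rangle$. Hence every entry below row $0$ in column $k$ is orthogonal to $|k\rangle$. So an $\{i,j\}$-entry lies in row $r\ge1$ and in a column $k\notin\{i,j\}$. I would encode this with the matrices $M^{(t)}=(|\langle t|a_{r,k}\rangle|^2)_{r,k}$ for $t=0,\dots,n-1$. Each $M^{(t)}$ is doubly stochastic, because rows and columns of $A$ are orthonormal bases, so $\sum_k|a_{r,k}\rangle\langle a_{r,k}|=I$ and likewise for columns. The first row forces column $t$ of $M^{(t)}$ to be the unit vector $e_0$.

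\textbf{Step 2 (local structure at an $\{i,j\}$-entry).} Fix an $\{i,j\}$-entry $|v\rangle=\alpha|i\rangle+\beta|j\rangle$ at position $(r,k)$. The remaining entries of row $r$ form an orthonormal basis of $|v\rangle^{\perp}$, and the remaining entries of column $k$ form an orthonormal basis of $\{|v\rangle,|k\rangle\}^{\perp}$. Project onto $W=\operatorname{span}\{|i\rangle,|j\rangle\}$. In row $r$, the projections $P_W|a_{r,s}\rangle$ for $s\neq k$ satisfy $\sum_{s\neq k}P_W|a_{r,s}\rangle\langle a_{r,s}|P_W=|v^\perp\rangle\langle v^\perp|$, where $|v^\perp\rangle=\bar\beta|i\rangle-\bar\alpha|j\rangle$. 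So every other entry of row $r$ has its $W$-component parallel to $|v^\perp\rangle$.

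In particular, look at the entries $(r,i)$ and $(r,j)$. The entry $(r,i)$ is orthogonal to $|i\rangle$, so its $W$-component is a multiple of $|j\rangle$. But that component must also be parallel to $|v^\perp\rangle$, which has nonzero $|i\rangle$- and $|j\rangle$-coordinates since $\alpha\beta\neq0$. Hence the entry $(r,i)$ has zero $W$-component, and the same holds for $(r,j)$. So in row $r$ the mass $\sum_s|\langle i|a_{r,s}\rangle|^2=1$ and $\sum_s|\langle j|a_{r,s}\rangle|^2=1$ is carried by column $k$ together with columns outside $\{i,j,k\}$, all with $W$-parts aligned to $|v^\perp\rangle$.

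\textbf{Step 3 (global contradiction).} The aim is to turn this rigidity into a counting or extremal contradiction over all pairs. My first attempt is an extremal choice. Among all $\{i,j\}$-entries, take one whose row $r$ has the minimum possible number of entries with nonzero $W$-component (at least two: $|v\rangle$ and one carrying $|v^\perp\rangle$). Then transfer the alignment of Step 2 to the pairs $\{i,s\}$ and $\{j,s\}$, using the assumption that these pairs also occur. Alternatively, I would apply the column-version of Step 2 in column $k$: there the $W$-components must span $W$ minus $|v\rangle$ while avoiding $|k\rangle$. Combining row and column rigidity with the doubly stochastic structure of $M^{(i)}$ and $M^{(j)}$ should force some pair $\{i,j\}$ to have all its $W$-mass outside row $0$ sitting in a configuration that violates double stochasticity or orthogonality to the first row.

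I expect Step 3 to be the main obstacle. The local rigidity is clear, but turning the assumption ``every pair occurs'' into a contradiction needs the right extremal choice. For small $n$ (e.g.\ $n=3$, where all $\text{QLS}(3)$ are classical) the conclusion is immediate. For general $n$ I would first test the extremal argument on $n=4$ to see which choice of pair makes the alignment constraints incompatible.
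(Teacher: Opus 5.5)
\textbf{There is a genuine gap: Step 3 is not carried out.} Your Steps 1 and 2 are correct, but the proposal stops exactly where the proof has to happen. Step 3 describes an intended extremal choice and a hoped-for clash with the double stochasticity of $M^{(i)}$ and $M^{(j)}$, and no contradiction is ever derived. As written, nothing rules out the hypothesis that every pair $\{i,j\}$ occurs. The doubly stochastic matrices and the extremal choice are not actually needed; what is missing is a short counting step.

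\textbf{The missing counting argument.} Your Step 2 already contains the key fact, although you only apply it to the cells $(r,i)$ and $(r,j)$. If two entries in the same row, or in the same column, are each supported on exactly two basis vectors, their supports are either equal or disjoint. Indeed, supports $\{i,j\}$ and $\{i,l\}$ with $j\neq l$ give inner product $\bar\alpha\alpha'\neq 0$.

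\begin{enumerate}
\item Choose one $\{i,j\}$-entry for each of the $\binom n2$ pairs. These lie in distinct cells and have pairwise distinct supports, so any two of them in a common row or column have disjoint supports.
\item Row $0$ contains none of them, and every other row contains at most $\lfloor n/2\rfloor$ of them.
\item Every column contains at most $\lfloor (n-1)/2\rfloor$ of them. Below row $0$, the entries of a column are orthogonal to the basis vector at its top, so their supports lie in a set of only $n-1$ indices.
\item For odd $n$, the row bound gives at most $(n-1)^2/2<n(n-1)/2$ chosen entries. For even $n$, the column bound gives at most $n(n-2)/2<n(n-1)/2$. Either way this contradicts the $\binom n2$ entries chosen in the first step.
\end{enumerate}

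This is precisely the paper's pigeonhole argument, using rows for odd $n$ and columns for even $n$. Without it, or some substitute, your proposal does not establish the lemma.
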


\begin{proof}
Suppose that for every distinct pair \(i,j\in\{0,1,\ldots,n-1\}\), there exists an entry of \(A\) of the form \(\alpha|i\rangle+\beta|j\rangle\) with \(\alpha\beta\neq0\).  Then \(A\) contains at least \(\frac{n(n-1)}{2}\) such entries.

If \(n\) is odd, then by the pigeonhole principle some row other than the first row contains at least  \(\frac{n+1}{2}\) such entries. Since the entries in a row  are mutually orthogonal,  their supports on the computational basis must be pairwise disjoint. Each of these entries is supported on  two   basis vectors, so this row would require at least \(n+1\) distinct basis vectors, which is impossible.

If \(n\) is even, then some column contains at least \(\frac{n}{2}\) such entries. Assume that this is the \(i\)-th column. Since the entries in a column are mutually orthogonal, their supports are pairwise disjoint.  Hence these entries require at least \(n\) distinct   basis vectors. However, all their supports are contained in \(\{|0\rangle,|1\rangle,\ldots,|n-1\rangle\}\setminus\{|i\rangle\}\), which contains only \(n-1\) basis vectors, a contradiction.
\end{proof}

\begin{lemma}\label{lem:2.6}
Suppose  that  $n\ge 3$. Let   \(A=(|a_{i,j}\rangle)\) be a \(\text{QLS}(n)\) with \(|A|=c\), and let \(Y\) be a finite set of unit vectors in \(\mathcal{H}_n\). Then there exists a unitary matrix \(U\) of order \(n\) such that \(|UA|=c\) and \(UA\cap Y=\emptyset\).
\end{lemma}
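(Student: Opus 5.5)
By Lemma~\ref{lem:2.20}, $UA$ is a $\text{QLS}(n)$ with $|UA|=c$ for every unitary $U$. So it suffices to find one unitary $U$ with $U|a\rangle\ne|y\rangle$ for every entry $|a\rangle\in A$ and every $|y\rangle\in Y$. Both $A$ (as a set of distinct states) and $Y$ are finite. The plan is to take $U$ from a one-parameter family built from the vector $u_n(x)$ and to use Lemma~\ref{lem:2.3} to exclude all but finitely many values of $x$.

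For each $x\in(0,\tfrac12)$, let $U_x$ be any unitary whose first row is $u_n(x)^{\top}$. Such a matrix exists: $u_n(x)$ is a real unit vector, so it can be completed to an orthonormal basis of $\mathbb{R}^n$, and these basis vectors can be taken as the rows. Then the $0$-th coordinate of $U_x|a\rangle$ is $\sum_k (u_n(x))_k a_k=(u_n(x),\overline{|a\rangle})$, up to the conjugation convention of the inner product. Hence $|(U_x|a\rangle)_0|=|(u_n(x),\alpha_a)|$, where $\alpha_a$ is the entrywise conjugate of $|a\rangle$, which is nonzero.

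Suppose $U_x|a\rangle=|y\rangle$ as states. Then moduli of coordinates agree (the contrapositive of Lemma~\ref{lem:2.4}), so $|(u_n(x),\alpha_a)|=|y_0|$. For each fixed pair $(|a\rangle,|y\rangle)$ with $|a\rangle\in A$ and $|y\rangle\in Y$, Lemma~\ref{lem:2.3} (using $n\ge3$ and $\alpha_a\ne0$) shows that this equation in $x$ has only finitely many solutions in $(0,\tfrac12)$. There are finitely many pairs, so the set of bad $x$ is finite. Any $x$ outside this set gives $U=U_x$ with $U_xA\cap Y=\emptyset$, while $|U_xA|=c$ still holds by Lemma~\ref{lem:2.20}.

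The main step needing care is setting up the reduction to Lemma~\ref{lem:2.3}. The inner product in that lemma must be matched with the first coordinate of $U_x|a\rangle$, via conjugation if needed; the modulus equation is insensitive to this. Lemma~\ref{lem:2.3} also requires a nonzero vector, which holds because the entries are unit vectors. Note that it does not matter how the remaining rows of $U_x$ depend on $x$, since only the first coordinate is used to separate the states.
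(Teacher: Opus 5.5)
Your proposal is correct and follows the paper's proof: choose $x_0\in(0,\tfrac12)$ outside the finite set, given by Lemma~\ref{lem:2.3}, of solutions of $|(u_n(x),|a\rangle)|=|y_0|$ over all pairs $(|a\rangle,|y\rangle)$, take a unitary whose first row is $u_n(x_0)^{\top}$, and conclude via Lemmas~\ref{lem:2.4} and~\ref{lem:2.20}. Passing to the conjugate vector $\alpha_a$ does no harm but is unnecessary, since $u_n(x)$ is real and so $|(U_x|a\rangle)_0|=|(u_n(x),|a\rangle)|$ holds directly.
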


\begin{proof}
For \(|a_{i,j}\rangle\in A\) and \(|y\rangle= (y_0, y_1, \ldots, y_{n-1})^{\top}\in Y\), define
\[
I(|a_{i,j}\rangle,|y\rangle)= \Bigl\{x\in(0, \tfrac{1}{2}): \bigl|(u_n(x),|a_{i,j}\rangle)\bigr| =|y_0|\Bigr\},
\]
\[
I=\bigcup_{\substack{|a_{i,j}\rangle\in A\\ |y\rangle\in Y}}I(|a_{i,j}\rangle,|y\rangle).
\]

By Lemma~\ref{lem:2.3} we know that \(I(|a_{i,j}\rangle,|y\rangle)\) is a finite set. So we can choose $x_0\in(0,\tfrac{1}{2})\setminus I$, and extend $u_n(x_0)$ to an orthonormal basis $\{u_n(x_0),u_1,\ldots,u_{n-1}\}$ of $\mathcal H_n$. Define
\[
U=(u_n(x_0),u_1,\dots,u_{n-1})^{\dagger},
\]
where \({}^\dagger\) denotes the conjugate transpose. Then $U$ is a unitary matrix  of order $n$. By Lemma~\ref{lem:2.20},  \(UA\) is a \(\text{QLS}(n)\) with \(|UA|=c\). Denote $U|a_{i,j}\rangle=(w_0, w_1, \ldots, w_{n-1})^{\top}$. Since \(\bigl|w_0\bigr|=\bigl|(u_n(x_0),|a_{i,j}\rangle)\bigr|\neq |y_0|\) for every \(|y\rangle\in Y\), Lemma~\ref{lem:2.4} implies that \(U|a_{i,j}\rangle \notin Y\). Therefore, \(UA\cap Y=\emptyset\).
\end{proof}

\begin{corollary}\label{cor:2.7}
Suppose  that  $n\ge 3$. Let $A$ and $B$ be two $\text{QLS}(n)\text{s}$ with  $|A|=c_1$ and $|B|=c_2$, respectively.  Then for any positive integer $k$, there exist $k$ $\text{QLS}(n)\text{s}$ $A_1,\dots,A_k$ of cardinality $c_1$ and $k$ $\text{QLS}(n)\text{s}$ $B_1,\dots,B_k$ of cardinality $c_2$, such that \(S\cap T = \emptyset\) for any distinct $S,T\in\{A_1,\dots,A_k,B_1,\dots,B_k\}$.
\end{corollary}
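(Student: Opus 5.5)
The corollary follows by applying Lemma~\ref{lem:2.6} $2k$ times in sequence, each time choosing $Y$ to be the union of the squares already built. Lemma~\ref{lem:2.6} needs $Y$ to be a finite set of unit vectors. This always holds, because each QLS$(n)$ contains at most $n^2$ distinct vectors and a finite union of such sets is finite.

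\textbf{Construction.} Set $A_1=A$. For $t=2,\dots,k$, suppose $A_1,\dots,A_{t-1}$ have been built, and let
\[
Y=A_1\cup\cdots\cup A_{t-1}.
\]
Lemma~\ref{lem:2.6} (valid since $n\ge 3$) gives a unitary $U_t$ with $|U_tA|=c_1$ and $U_tA\cap Y=\emptyset$. Put $A_t=U_tA$; it is a QLS$(n)$ by Lemma~\ref{lem:2.20}. We then continue in the same way with $B$. At each step we take $Y=A_1\cup\cdots\cup A_k\cup B_1\cup\cdots\cup B_{s-1}$, apply Lemma~\ref{lem:2.6} to $B$, and obtain a unitary $V_s$. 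We set $B_s=V_sB$, which has cardinality $c_2$ and is disjoint from every square constructed before it.

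\textbf{Verification.} Take two distinct squares $S,T$ from the list, and say $T$ was constructed after $S$. Then $S$ was included in the set $Y$ used when constructing $T$, so $S\cap T=\emptyset$. Here disjointness is meant up to global phase, which is how Lemma~\ref{lem:2.6} is stated: its proof compares only moduli of coordinates, and by Lemma~\ref{lem:2.4} this suffices to show vectors are distinct as quantum states.

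There is no real obstacle here. The only points to check are that $Y$ stays finite at every step and that the hypothesis $n\ge3$ of Lemma~\ref{lem:2.6} holds; both are immediate from the assumptions.
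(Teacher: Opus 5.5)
Your proposal is correct and follows essentially the paper's own argument: apply Lemma~\ref{lem:2.6} repeatedly, each time with $Y$ equal to the (finite) union of the squares already built. The differences are only in bookkeeping. The paper keeps $B_1=B$, rotates $A_{k-1}$ and $B_{k-1}$ rather than the originals, and interleaves the $A$'s and $B$'s, whereas you keep $A_1=A$ and build all $A_t$ before the $B_s$; neither choice affects the argument.
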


\begin{proof}
We construct  $A_1,\dots,A_k$ and  $B_1,\dots,B_k$  inductively. By Lemma~\ref{lem:2.6} there exists a unitary matrix $U$ of order $n$ such that \(UA\cap B=\emptyset\). Let $A_1=UA$ and $B_1=B$. Then \(A_1\cap B_1=\emptyset\).

Suppose that $A_1,\dots,A_{k-1}$ and $B_1,\dots,B_{k-1}$ have been constructed such that any two distinct  \(\text{QLS}(n)\text{s}\) of them are disjoint.
Let
\[
Y_k = \Bigl(\bigcup_{i=1}^{k-1} A_i\Bigr) \cup \Bigl(\bigcup_{j=1}^{k-1} B_j\Bigr).
\]
By Lemma~\ref{lem:2.6}  with \(A=A_{k-1}\) and \(Y=Y_k\), there exists a \(\text{QLS}(n)\) \(A_k\) with \(|A_k|=c_1\) such that \(A_k \cap Y_k = \emptyset\). Next, define
\[
X_k= \Bigl(\bigcup_{i=1}^{k} A_i\Bigr) \cup \Bigl(\bigcup_{j=1}^{k-1} B_j\Bigr).
\]
Applying Lemma~\ref{lem:2.6} with $A=B_{k-1}$ and  $Y=X_k$, we obtain a \(\text{QLS}(n)\) \(B_k\) with  \(|B_k|=c_2\) such that \(B_k \cap X_k = \emptyset\).
\end{proof}

\begin{definition}
Let \(h \ge 1\). For  $|a\rangle=(a_0,\ldots,a_{n-1},\ldots,a_{n+h-1})^{\top}\in\mathcal H_{n+h}$, define its projection onto the first $n$ components by $|a\rangle_{[n]}=(a_0,\ldots,a_{n-1})^{\top}\in\mathcal H_{n}$.
\end{definition}

If  \(|a\rangle_{[n]}=\mathbf{0}_n\),  where \(\mathbf{0}_n\) is the zero vector in \(\mathcal{H}_n\), then \(|a\rangle \in \operatorname{span}\{|n\rangle,\ldots,|n+h-1\rangle\}\).

\begin{lemma}\label{lem:2.9}
Suppose that \(n\ge 3\) and \(h\ge 1\). Let $A=(|a_{i,j}\rangle)$ be a $\text{QLS}(n+h)$ with $|A|=c$. Define $A^h=A \cap  \operatorname{span}\{|n\rangle,\ldots,|n+h-1\rangle\}$, and let $Y$ be a finite set of unit vectors in $\mathcal H_{n+h}$ such that $A^h\subseteq Y$. Then there exists a unitary matrix $U$ of order $n+h$ such that  $|UA|=c$ and \(UA\cap Y =A^h\).
\end{lemma}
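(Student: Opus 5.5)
Modeled on Lemma~\ref{lem:2.6}, but the unitary will act only on the first $n$ coordinates, so it fixes every vector of $A^h$. We look for
\[
U=\begin{pmatrix} V & 0\\ 0 & I_h\end{pmatrix},
\]
with $V$ an $n\times n$ unitary whose first row is $u_n(x_0)^{\dagger}$ for a well-chosen $x_0\in(0,\tfrac12)$. By Lemma~\ref{lem:2.20}, $UA$ is a $\text{QLS}(n+h)$ with $|UA|=c$.

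First, $U$ fixes $\operatorname{span}\{|n\rangle,\ldots,|n+h-1\rangle\}$ pointwise, so $U|a\rangle=|a\rangle$ for every $|a\rangle\in A^h$. Hence $A^h\subseteq UA\cap Y$, since $A^h\subseteq Y$. Also, $U$ preserves this span and its orthogonal complement, so $U|a\rangle$ lies in the span exactly when $|a\rangle$ does. It therefore remains to show that no $|a\rangle\in A\setminus A^h$ satisfies $U|a\rangle\in Y$.

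For such $|a\rangle$ the projection $|a\rangle_{[n]}$ is a nonzero vector of $\mathcal H_n$. The $0$-th coordinate of $U|a\rangle$ equals $(u_n(x_0),|a\rangle_{[n]})$, since the top-left block of $U$ has first row $u_n(x_0)^\dagger$. For each such $|a\rangle$ and each $|y\rangle\in Y$, define
\[
I(|a\rangle,|y\rangle)=\{x\in(0,\tfrac12): |(u_n(x),|a\rangle_{[n]})|=|y_0|\}.
\]
Lemma~\ref{lem:2.3} applies because $n\ge3$ and $|a\rangle_{[n]}\neq\mathbf 0_n$, so each $I(|a\rangle,|y\rangle)$ is finite. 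There are finitely many distinct $|a\rangle$ (at most $c$) and $Y$ is finite, so the union $I$ of all these sets is finite.

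Choose $x_0\in(0,\tfrac12)\setminus I$ and extend $u_n(x_0)$ to an orthonormal basis of $\mathcal H_n$ to obtain $V$. For each $|a\rangle\in A\setminus A^h$ and $|y\rangle\in Y$, the $0$-th coordinates of $U|a\rangle$ and $|y\rangle$ then have different moduli. Lemma~\ref{lem:2.4} gives $U|a\rangle\neq|y\rangle$ as quantum states, so $UA\cap Y=A^h$.

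One point needs a brief check. $A$ is identified with its set of distinct states, and $U|a\rangle$ is defined only up to phase. This is harmless, because the condition $|(u_n(x),|a\rangle_{[n]})|=|y_0|$ is phase-invariant in both $|a\rangle$ and $|y\rangle$. Beyond that, the only real requirement is that $|a\rangle_{[n]}$ be nonzero in order to invoke Lemma~\ref{lem:2.3}, and this is exactly what excluding $A^h$ guarantees.
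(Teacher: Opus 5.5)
Your proposal is correct and follows the paper's proof essentially step for step. It uses the same block-diagonal unitary with top-left block having first row $u_n(x_0)^{\dagger}$ and $I_h$ in the lower-right corner, and the same finite exceptional set obtained from Lemma~\ref{lem:2.3} applied to the nonzero projections $|a\rangle_{[n]}$ for $|a\rangle\notin A^h$. It then appeals to Lemma~\ref{lem:2.4} in the same way, and your remarks on phase invariance and on why $A^h\subseteq UA\cap Y$ only spell out points the paper leaves implicit.
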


\begin{proof}
For \(|a_{i,j}\rangle\notin A^h\) and \(|y\rangle= (y_0, y_1, \ldots, y_{n+h-1})^{\top}\in Y\), define
\[
I(|a_{i,j}\rangle,|y\rangle)= \left\{x\in(0,\tfrac12):\bigl|(u_n(x),|a_{i,j}\rangle_{[n]})\bigr|=|y_0|\right\},
\]
\[
I=
\bigcup_{\substack{|a_{i,j}\rangle\notin A^h\\ |y\rangle\in Y}}
I(|a_{i,j}\rangle,|y\rangle).
\]

Since \(|a_{i,j}\rangle \notin A^h\), it follows that \(|a_{i,j}\rangle_{[n]} \neq \mathbf{0}_n\). By Lemma~\ref{lem:2.3}, \(I(|a_{i,j}\rangle,|y\rangle)\) is a finite set. Then we can choose \(x_0\in(0,\tfrac12)\setminus I\), and extend \(u_n(x_0)\) to an orthonormal basis \(\{u_n(x_0),u_1,\ldots,u_{n-1}\}\) of \(\mathcal H_n\). Define
\[
U=
\begin{pmatrix}
\bigl(u_n(x_0),u_1,\ldots,u_{n-1}\bigr)^\dagger & 0 \\[1mm]
0  & I_h
\end{pmatrix}.
\]
where \(I_h\) is the identity matrix of order \(h\).  Then $U$ is a unitary matrix  of order $n+h$.   By Lemma~\ref{lem:2.20}, \(UA\) is a \(\text{QLS}(n+h)\) with \(|UA|=c\). Denote $U|a_{i,j}\rangle=(w_0, w_1, \ldots, w_{n+h-1})^{\top}$. Since
\(|w_0|=\bigl|\bigl(u_n(x_0),|a_{i,j}\rangle_{[n]}\bigr)\bigr|\neq|y^{(0)}|\) for every $|y\rangle\in Y$. Thus, for any $|a_{i,j}\rangle\notin  A^h$, we have $U|a_{i,j}\rangle\notin Y$. This implies that $UA\cap Y\subseteq U A^h$.

On the other hand, for any \(|a_{i,j}\rangle\in A^h\), we have \(U|a_{i,j}\rangle=|a_{i,j}\rangle \). Thus $U A^h =A^h$. Then $UA\cap Y\subseteq A^h$. Since \(A^h\subseteq Y\), it follows that $A^h\subseteq UA\cap Y$.  Then \(UA\cap Y=A^h\).
\end{proof}

\begin{lemma}\label{lem:2.10}
Suppose that \(n\ge 3\) and \(h\ge 1\).  Let $A$ and $B$ be two $\text{QLS}(n+h)\text{s}$ with $|A|=c_1$, $|B|=c_2$, and $A^{h} \subseteq B$. Then for any positive integer $k$, there exist $k$ $\text{QLS}(n+h)\text{s}$ $A_1,\dots,A_k$ of cardinality $c_1$ and $k$ $\text{QLS}(n+h)\text{s}$ $B_1,\dots,B_k$ of cardinality $c_2$,  such that \(S\cap T = A^{h}\) for any distinct $S,T\in\{A_1,\dots,A_k,B_1,\dots,B_k\}$.
\end{lemma}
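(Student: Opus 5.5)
The plan follows the inductive argument of Corollary~\ref{cor:2.7}, using Lemma~\ref{lem:2.9} in place of Lemma~\ref{lem:2.6}. One preliminary fact is needed. If $U$ is any unitary of the block form used in the proof of Lemma~\ref{lem:2.9}, namely $U=\mathrm{diag}(V,I_h)$, then $U$ fixes every vector in $\operatorname{span}\{|n\rangle,\ldots,|n+h-1\rangle\}$ and maps the complement of this span to itself. Hence $(UA)^h=A^h$ for every such $U$. Every QLS produced by Lemma~\ref{lem:2.9} therefore has the same $h$-part as the one we started from.

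Before starting the induction we must ensure $B^h=A^h$, since we need every member of the family to contain $A^h$ and every pairwise intersection to be exactly $A^h$. We have $A^h\subseteq B$ and $A^h\subseteq \operatorname{span}\{|n\rangle,\ldots\}$, so $A^h\subseteq B^h$. If $B^h$ is strictly larger than $A^h$, then the $B_i$ would share the extra vectors, and two of them could not intersect in exactly $A^h$. The same problem arises if $A^h\subsetneq B^h$ and we compare some $A_i$ with some $B_j$. I would handle this by reading the hypothesis as guaranteeing $B^h=A^h$, which is presumably how it is applied later in the paper. If the lemma is meant literally, I would note that the argument gives $S\cap T=A^h$ for $S$ or $T$ among the $A_i$, and $B_i\cap B_j=B^h$. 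This bookkeeping is the main obstacle; the analytic content is already contained in Lemma~\ref{lem:2.9}.

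Assuming $B^h=A^h$, the induction runs as follows. Apply Lemma~\ref{lem:2.9} to $A$ with $Y=B$; this is valid because $A^h\subseteq B$. This gives $A_1=UA$ with $|A_1|=c_1$ and $A_1\cap B=A^h$. Set $B_1=B$. Now suppose $A_1,\dots,A_{k-1}$ and $B_1,\dots,B_{k-1}$ have been built, with pairwise intersections equal to $A^h$ and each containing $A^h$. Let
\[
Y_k=\Bigl(\bigcup_{i<k}A_i\Bigr)\cup\Bigl(\bigcup_{j<k}B_j\Bigr).
\]
This is a finite set containing $A^h$. Lemma~\ref{lem:2.9}, applied to $A_{k-1}$ (whose $h$-part is $A^h$), gives $A_k$ with $|A_k|=c_1$ and $A_k\cap Y_k=A^h$, so $A_k\cap S=A^h$ for every earlier $S$. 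Next let $X_k=Y_k\cup A_k$ and apply Lemma~\ref{lem:2.9} to $B_{k-1}$, whose $h$-part is $B^h=A^h$. This gives $B_k$ with $|B_k|=c_2$ and $B_k\cap X_k=A^h$. Each intersection $S\cap T$ contains $A^h$, since both sets contain $A^h$, and is contained in $A^h$ by construction, so equality holds.
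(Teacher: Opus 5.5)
This is essentially the paper's proof: start from $A_1=UA$, $B_1=B$, then alternately apply Lemma~\ref{lem:2.9} to $A_{k-1}$ with $Y_k$ and to $B_{k-1}$ with $X_k$.

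The caveat you raise is genuine, and it applies to the paper's write-up as well. The unitaries have the form $\mathrm{diag}(V,I_h)$, so $B_{k-1}^h=B^h$. Hence Lemma~\ref{lem:2.9} yields only $B_k\cap X_k=B^h$, whereas the paper writes $A^h$. Both arguments therefore establish the lemma under the extra condition $B^h=A^h$. Both also rely on the invariance $(UA)^h=A^h$, which you state explicitly and the paper uses tacitly when it writes $A^h_{k-1}\subseteq Y_k$.

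The condition $B^h=A^h$ holds wherever the lemma is used. In Corollary~\ref{cor:2.11} it is automatic, since both squares contain $|n\rangle$. In Corollary~\ref{cor:2.12} it holds once Lemma~\ref{lem:2.5} is applied to $B$ as well, with its own relabelling. For $h=1$ the literal statement holds anyway, because the only other case is $A^1=\emptyset$, which is Corollary~\ref{cor:2.7}.

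One slip: the problem does not arise when comparing an $A_i$ with a $B_j$. The construction gives $A_i\cap B_j\subseteq A^h$ or $A_i\cap B_j\subseteq B^h$, and $A_i\cap B^h\subseteq A_i^h=A^h$. So these intersections equal $A^h$ even when $B^h\supsetneq A^h$, exactly as your closing remark says.
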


\begin{proof}
By Lemma~\ref{lem:2.9}, there exists a unitary matrix $U$ of order $n+h$ such that \(UA\cap B=A^{h}\). Let $A_1=UA$ and $B_1=B$. Then \(A_1\cap B_1=A^{h}\). Suppose that $A_1,\dots,A_{k-1}$ and $B_1,\dots,B_{k-1}$ have been constructed such that  \(S\cap T=A^{h}\) for any distinct \(S,T\in \{A_1,\dots,A_{k-1},B_1,\dots,B_{k-1}\}\). Let
\[
Y_k = \Bigl(\bigcup_{i=1}^{k-1} A_i\Bigr) \cup \Bigl(\bigcup_{j=1}^{k-1} B_j\Bigr).
\]
Since  $A^h_{k-1} \subseteq Y_k$. Apply Lemma~\ref{lem:2.9} with $A=A_{k-1}$ and $Y=Y_k$, we   obtain a \(\text{QLS}(n)\) \(A_k\) with  \(|A_k|=c_1\) such that \(A_k \cap Y_k = A^{h}\). Next let
\[
X_k= \Bigl(\bigcup_{i=1}^{k} A_i\Bigr) \cup \Bigl(\bigcup_{j=1}^{k-1} B_j\Bigr).
\]
Since  $B^h_{k-1} \subseteq X_k$.  Applying Lemma~\ref{lem:2.9} with $A=B_{k-1}$ and $Y=X_k$, we obtain a \(\text{QLS}(n)\) \(B_k\) with  \(|B_k|=c_2\) such that \(B_k \cap X_k = A^{h}\).
\end{proof}

\begin{corollary}\label{cor:2.11}
Suppose that $n\ge3$. Let $A$ and $B$ be two $\text{QLS}(n+1)\text{s}$  with $|A|=c_1$ and $|B|=c_2$, respectively. Then for any positive integer $k$,  there exist $k$ $\text{QLS}(n+1)\text{s}$ $A_1,\dots,A_k$ of cardinality $c_1$ and $k$ $\text{QLS}(n+1)\text{s}$ $B_1,\dots,B_k$ of cardinality $c_2$,   such that \(S\cap T= \{|n\rangle \}\) for any distinct $S,T\in\{A_1,\dots,A_k,B_1,\dots,B_k\}$.
\end{corollary}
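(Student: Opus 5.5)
We derive this from Lemma~\ref{lem:2.10} with $h=1$. The work is to arrange that $A^{1}=A\cap\operatorname{span}\{|n\rangle\}$ equals $\{|n\rangle\}$ and that $|n\rangle\in B$. Lemma~\ref{lem:2.20} lets us do this by unitary changes of basis without altering cardinalities.

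\emph{Normalising $A$ and $B$.} Pick any entry $|a\rangle$ of $A$ and extend it to an orthonormal basis of $\mathcal H_{n+1}$ in which $|a\rangle$ is the last vector. Let $U_A$ be the unitary sending this basis to the computational basis, so that $U_A|a\rangle=|n\rangle$. By Lemma~\ref{lem:2.20}, $A':=U_AA$ is a $\text{QLS}(n+1)$ with $|A'|=c_1$, and it contains $|n\rangle$. Do the same for $B$ to get $B'$ with $|B'|=c_2$ and $|n\rangle\in B'$.

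\emph{Computing $A'^{1}$ and checking the hypothesis.} Every unit vector in $\operatorname{span}\{|n\rangle\}$ equals $e^{\mathrm{i}\theta}|n\rangle$, which is the same quantum state as $|n\rangle$. Since $|n\rangle\in A'$, this gives $A'^{1}=\{|n\rangle\}$, and likewise $B'^{1}=\{|n\rangle\}$. In particular $A'^{1}=\{|n\rangle\}\subseteq B'$, which is the hypothesis of Lemma~\ref{lem:2.10}.

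\emph{Applying Lemma~\ref{lem:2.10}.} With $h=1$ (and $n\ge 3$), Lemma~\ref{lem:2.10} applied to $A',B'$ gives $A_1,\dots,A_k$ of cardinality $c_1$ and $B_1,\dots,B_k$ of cardinality $c_2$. Any two distinct members of this family intersect in exactly $A'^{1}=\{|n\rangle\}$, which is the claim.

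\emph{Main obstacle.} The only delicate point is inside the induction of Lemma~\ref{lem:2.10}, which applies Lemma~\ref{lem:2.9} to $A_{k-1}$ and $B_{k-1}$. This requires $A_{k-1}^{1}$ and $B_{k-1}^{1}$ to lie in the current forbidden set, and requires them to equal $\{|n\rangle\}$ so that the intersections come out right. Both hold because the unitaries in Lemma~\ref{lem:2.9} have block form with $I_h$ in the last block, so they fix $|n\rangle$. They are also unitary, so they map no vector outside $\operatorname{span}\{|n\rangle\}$ into it. Hence every $A_i$ and $B_j$ satisfies $A_i^{1}=B_j^{1}=\{|n\rangle\}$, and $|n\rangle$ lies in every $Y_k$ and $X_k$. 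I would note this invariant explicitly when invoking Lemma~\ref{lem:2.10}.
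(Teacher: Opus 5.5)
Your proposal is correct and follows the paper's proof. The paper also uses a unitary change of basis (Lemma~\ref{lem:2.20}) to put $|n\rangle$ into both $A$ and $B$, notes that then $A^{1}=\{|n\rangle\}\subseteq B$, and applies Lemma~\ref{lem:2.10} with $h=1$. Your explicit checks that $B'^{1}=\{|n\rangle\}$ and that the block-diagonal unitaries of Lemma~\ref{lem:2.9} preserve $A_i^{1}=B_j^{1}=\{|n\rangle\}$ are a worthwhile addition: the $B_k$-step of Lemma~\ref{lem:2.10} actually yields intersections equal to $B^{h}$, which coincides with $A^{h}$ here precisely because $|n\rangle\in B'$.
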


\begin{proof}
After suitable changes of basis, we may assume that both \(A\) and \(B\) contain \(|n\rangle\). Hence \(A^{1} =\{|n\rangle\}\) and $A^{1} \subseteq B$. The conclusion follows from Lemma~\ref{lem:2.10}  with $h=1$.
\end{proof}

\begin{corollary}\label{cor:2.12}
Suppose that $n\ge3$. Let $A$ and $B$ be two $\text{QLS}(n+2)\text{s}$  with $|A|=c_1$ and $|B|=c_2$, respectively. Then for any positive integer $k$, there exist $k$ $\text{QLS}(n+2)\text{s}$ $A_1,\dots,A_k$ of cardinality $c_1$ and $k$ $\text{QLS}(n+2)\text{s}$ $B_1,\dots,B_k$ of cardinality $c_2$,    such that \(S\cap T= \{|n\rangle,|n+1\rangle \}\) for any distinct $S,T\in\{A_1,\dots,A_k,B_1,\dots,B_k\}$.
\end{corollary}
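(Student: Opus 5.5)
The plan mirrors the proof of Corollary~\ref{cor:2.11}: first normalize $A$ and $B$ by unitary changes of basis so that both contain $|n\rangle$ and $|n+1\rangle$, and then apply Lemma~\ref{lem:2.10} with $h=2$. Once $A\cap\operatorname{span}\{|n\rangle,|n+1\rangle\}=\{|n\rangle,|n+1\rangle\}$ and $\{|n\rangle,|n+1\rangle\}\subseteq B$, we have $A^{2}=\{|n\rangle,|n+1\rangle\}\subseteq B$. Lemma~\ref{lem:2.10} then gives the families $A_1,\dots,A_k$ and $B_1,\dots,B_k$ with pairwise intersections exactly $A^{2}$. By Lemma~\ref{lem:2.20}, each unitary change of basis preserves the QLS property and the cardinality.

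\textbf{Step 1: placing $|n\rangle$ and $|n+1\rangle$ in $A$.} Pick two orthonormal entries of $A$, for instance two distinct entries $|a_{0,0}\rangle$ and $|a_{0,1}\rangle$ from the first row. Choose a unitary $U$ with $U|a_{0,0}\rangle=|n\rangle$ and $U|a_{0,1}\rangle=|n+1\rangle$, which exists because the two vectors are orthonormal. Replace $A$ by $UA$, and do the same for $B$.

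\textbf{Step 2: making $A^{2}$ exactly $\{|n\rangle,|n+1\rangle\}$.} This is the main obstacle. After Step 1, $A$ may still contain other entries of the form $\alpha|n\rangle+\beta|n+1\rangle$ with $\alpha\beta\neq0$. I would handle this with Lemma~\ref{lem:2.5}. By Step 1 applied to the whole first row, we may assume the first row of $A$ is the computational basis. Lemma~\ref{lem:2.5} then gives distinct indices $i,j$ such that no entry of $A$ is a nontrivial superposition of $|i\rangle$ and $|j\rangle$. Apply the permutation matrix sending $|i\rangle\mapsto|n\rangle$ and $|j\rangle\mapsto|n+1\rangle$. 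The only entries of the new $A$ lying in $\operatorname{span}\{|n\rangle,|n+1\rangle\}$ are, up to phase, $|n\rangle$ and $|n+1\rangle$, both of which occur in the first row. Hence $A^{2}=\{|n\rangle,|n+1\rangle\}$.

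For $B$ we only need $\{|n\rangle,|n+1\rangle\}\subseteq B$, which Step 1 already gives. Applying Lemma~\ref{lem:2.10} with $h=2$ (valid since $n\ge3$) then yields the corollary with $S\cap T=\{|n\rangle,|n+1\rangle\}$.

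One point needs checking inside Lemma~\ref{lem:2.10}. The inductive step applies Lemma~\ref{lem:2.9} to $A_{k-1}$ and $B_{k-1}$, so it needs $A_{k-1}^{h}=B_{k-1}^{h}=A^{h}$. This holds because $UA^h=A^h$ in Lemma~\ref{lem:2.9}, and the unitaries there act as the identity on $\operatorname{span}\{|n\rangle,|n+1\rangle\}$. For $B_1=B$, however, $B^{h}$ might contain extra superposition vectors. In that case I would normalize $B$ by the same Lemma~\ref{lem:2.5} argument so that $B^{2}=\{|n\rangle,|n+1\rangle\}$ as well. With that normalization, all members of the two families have the same $h$-part, and every pairwise intersection is exactly $\{|n\rangle,|n+1\rangle\}$.
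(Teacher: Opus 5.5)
Your proposal is correct and follows the paper's route: you make the first row of $A$ the computational basis, use Lemma~\ref{lem:2.5} and a permutation to get $A^{2}=\{|n\rangle,|n+1\rangle\}$, and then invoke Lemma~\ref{lem:2.10} with $h=2$. Your extra step of normalizing $B$ in the same way is a genuine improvement, because the paper normalizes only $A$, while Lemma~\ref{lem:2.9} applied to $B_{k-1}$ actually yields $B_k\cap X_k=B^{2}$; so if $B$ contained, say, $\frac{1}{\sqrt2}(|n\rangle+|n+1\rangle)$, one would get $B_1\cap B_2=B^{2}\supsetneq\{|n\rangle,|n+1\rangle\}$ for $k\ge 2$.
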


\begin{proof}
After suitable changes of basis, the first row of both \(A\) and \(B\) is the computational basis  \(\{|0\rangle,\ldots, |n+1\rangle\}\). By Lemma~\ref{lem:2.5}, and relabelling the computational basis if necessary, we may assume that no entry of  \(A\) is of the form \(\alpha|n\rangle+\beta|n+1\rangle\) with \(\alpha\beta\neq0\).  Hence  \(A^{2}= \{|n\rangle,|n+1\rangle\}\) and $A^{2} \subseteq B$.
The conclusion follows from Lemma~\ref{lem:2.10}  with $h=2$.
\end{proof}

\subsection{Examples}
\begin{example}\label{ex:1}
The cardinalities \(6\), \(8\), and \(9\) are attainable for \(\text{QLS}(6)\text{s}\). The case \(c=6\) is realized by a classical \(\text{QLS}(6)\). We give explicit examples for \(c=8\) and \(c=9\).

Let \(|+\rangle=\frac{1}{\sqrt{2}}(|0\rangle+|1\rangle)\), \(|-\rangle=\frac{1}{\sqrt{2}}(|0\rangle-|1\rangle)\).  Let \(\omega=e^\frac{2\pi \mathrm{i}}{3}\), and define
\(|a_0\rangle=\frac{1}{\sqrt{3}}(|0\rangle+|2\rangle+|4\rangle)\),
\(|a_1\rangle=\frac{1}{\sqrt{3}}(|0\rangle+\omega|2\rangle+\omega^2|4\rangle)\), and
\(|a_2\rangle=\frac{1}{\sqrt{3}}(|0\rangle+\omega^2|2\rangle+\omega|4\rangle)\).  Then the following arrays \(A\) and \(B\) are \(\text{QLS}(6)\text{s}\) with cardinalities \(8\) and \(9\), respectively:
\[
{

\setlength{\arraycolsep}{3.0pt}
\begin{array}{c@{\quad}c}
A=
\begin{array}{|c|c|c|c|c|c|}
\hline
|+\rangle&|-\rangle&|2\rangle&|3\rangle&|4\rangle&|5\rangle\\
\hline
|-\rangle&|+\rangle&|3\rangle&|2\rangle&|5\rangle&|4\rangle\\
\hline
|2\rangle&|3\rangle&|4\rangle&|5\rangle&|0\rangle&|1\rangle\\
\hline
|3\rangle&|2\rangle&|5\rangle&|4\rangle&|1\rangle&|0\rangle\\
\hline
|4\rangle&|5\rangle&|0\rangle&|1\rangle&|2\rangle&|3\rangle\\
\hline
|5\rangle&|4\rangle&|1\rangle&|0\rangle&|3\rangle&|2\rangle\\
\hline
\end{array},
&
B=
\begin{array}{|c|c|c|c|c|c|}
\hline
|a_0\rangle&|1\rangle&|a_1\rangle&|3\rangle&|a_2\rangle&|5\rangle\\
\hline
|1\rangle&|0\rangle&|3\rangle&|2\rangle&|5\rangle&|4\rangle\\
\hline
|a_1\rangle&|3\rangle&|a_2\rangle&|5\rangle&|a_0\rangle&|1\rangle\\
\hline
|3\rangle&|2\rangle&|5\rangle&|4\rangle&|1\rangle&|0\rangle\\
\hline
|a_2\rangle&|5\rangle&|a_0\rangle&|1\rangle&|a_1\rangle&|3\rangle\\
\hline
|5\rangle&|4\rangle&|1\rangle&|0\rangle&|3\rangle&|2\rangle\\
\hline
\end{array}
\end{array}.
}
\]
\end{example}

\begin{example}\label{ex:2}
The cardinalities \(7\), \(9\), and \(10\)  are attainable for  \(\text{QLS}(7)\text{s}\). The case \(c=7\) is realized by a  classical \(\text{QLS}(7)\). We give explicit examples for \(c=9\) and \(c=10\).

Let \(|+\rangle=\frac{1}{\sqrt{2}}(|0\rangle+|1\rangle)\), \(|-\rangle=\frac{1}{\sqrt{2}}(|0\rangle-|1\rangle)\). Let \(\omega=e^\frac{2\pi \mathrm{i}}{3}\), and define
\(|b_0\rangle=\frac{1}{\sqrt{3}}(|0\rangle+|1\rangle+|2\rangle)\), \(|b_1\rangle=\frac{1}{\sqrt{3}}(|0\rangle+\omega|1\rangle+\omega^2|2\rangle)\), and
\(|b_2\rangle=\frac{1}{\sqrt{3}}(|0\rangle+\omega^2|1\rangle+\omega|2\rangle)\). Then the following arrays \(A\) and \(B\) are \(\text{QLS}(7)\text{s}\) with cardinalities \(9\) and \(10\), respectively:

\[
{

\setlength{\arraycolsep}{3.0pt}
\begin{array}{c@{\hspace{12pt}}c}
A=
\begin{array}{|c|c|c|c|c|c|c|}
\hline
|+\rangle&|-\rangle&|6\rangle&|4\rangle&|3\rangle&|2\rangle&|5\rangle\\
\hline
|-\rangle&|+\rangle&|2\rangle&|3\rangle&|5\rangle&|6\rangle&|4\rangle\\
\hline
|2\rangle&|5\rangle&|3\rangle&|6\rangle&|0\rangle&|4\rangle&|1\rangle\\
\hline
|5\rangle&|3\rangle&|4\rangle&|2\rangle&|6\rangle&|1\rangle&|0\rangle\\
\hline
|4\rangle&|2\rangle&|0\rangle&|5\rangle&|1\rangle&|3\rangle&|6\rangle\\
\hline
|6\rangle&|4\rangle&|5\rangle&|1\rangle&|2\rangle&|0\rangle&|3\rangle\\
\hline
|3\rangle&|6\rangle&|1\rangle&|0\rangle&|4\rangle&|5\rangle&|2\rangle\\
\hline
\end{array},
&
B=
\begin{array}{|c|c|c|c|c|c|c|}
\hline
|b_0\rangle&|b_1\rangle&|b_2\rangle&|5\rangle&|6\rangle&|3\rangle&|4\rangle\\
\hline
|b_1\rangle&|b_2\rangle&|b_0\rangle&|3\rangle&|4\rangle&|6\rangle&|5\rangle\\
\hline
|b_2\rangle&|b_0\rangle&|b_1\rangle&|4\rangle&|3\rangle&|5\rangle&|6\rangle\\
\hline
|4\rangle&|3\rangle&|5\rangle&|6\rangle&|0\rangle&|2\rangle&|1\rangle\\
\hline
|6\rangle&|5\rangle&|3\rangle&|1\rangle&|2\rangle&|4\rangle&|0\rangle\\
\hline
|5\rangle&|6\rangle&|4\rangle&|2\rangle&|1\rangle&|0\rangle&|3\rangle\\
\hline
|3\rangle&|4\rangle&|6\rangle&|0\rangle&|5\rangle&|1\rangle&|2\rangle\\
\hline
\end{array}
\end{array}.
}
\]
\end{example}

\section{Main constructions}
In this section, we extend the singular direct product construction   \cite{Heinrich,Stinson} and direct product construction  \cite{Donald} of Latin squares to the quantum setting, and investigate the possible cardinalities of quantum Latin squares. For our constructions, we need the following definitions.

\begin{definition}
A transversal in  a \( \text{QLS}(n) \) refers to a set of $n$ elements, each located in a distinct row and a distinct column, forming an orthonormal basis of $\mathcal{H}_{n}$.
\end{definition}

Two transversals in a \( \text{QLS}(n) \) are called {\it disjoint} if they have no overlapping cells. It has been shown in \cite{Colbourn} that for any positive integer \( n \ne 2, 6 \), there exists an \( \text{LS}(n) \) with \( n \) pairwise disjoint transversals. By replacing each entry $i\in\{ 0,1,\dots,n-1\}$ with the corresponding  ket vector \( |i\rangle \), we obtain a \( \text{QLS}(n) \) with \( n \) pairwise disjoint transversals.

\begin{definition}
An incomplete quantum Latin square $\text{IQLS}(n, h)$ is an $n \times n$ matrix  with an $h\times h$ hole in the lower-right corner,  whose entries satisfy the following conditions:
\begin{itemize}
\item[(i)] Each entry is a unit column vector from $n$-dimensional Hilbert space $\mathcal{H}_n$;
\item[(ii)] For $i \in \{0,1,\dots,n-h-1\}$, the elements in the $i$-th row and column form an orthonormal basis of $\mathcal{H}_{n}$;
\item[(iii)] For $j \in \{n-h,\dots,n-1\}$, the elements in the $j$-th row and column are linear combinations of the computational basis vectors $\{|0\rangle, |1\rangle, \dots, |n-h-1\rangle$\}.
\end{itemize}
\end{definition}

The cardinality \( c \) of an $\text{IQLS}(n, h)$ to be the number of distinct vectors  in the array.  Clearly, the cardinality  satisfies \( n \leq c \leq n^2-h^2 \).

\begin{definition}
Let \(Y\subseteq\mathcal H_n\) be a finite set.  Define the extension of \(Y\) by
\[
Y^{+}=\left\{\begin{pmatrix}|y\rangle\\0\end{pmatrix}:|y\rangle\in Y\right\}\subseteq\mathcal H_{n+1}.
\]
\end{definition}

Consider the following quantum state vectors:
$$
|a\rangle = \begin{pmatrix} a_0 \\ a_1 \\ \vdots \\ a_{n-1} \end{pmatrix} \in \mathcal{H}_n, \quad
|b\rangle = \begin{pmatrix} b_0 \\ b_1 \\ \vdots \\ b_{m-1} \end{pmatrix} \in \mathcal{H}_m, \quad
|c\rangle = \begin{pmatrix} c_0 \\ c_1 \\ \vdots \\ c_{m} \end{pmatrix} \in \mathcal{H}_{m+1}.
$$

\begin{definition}[Extended Tensor Product $\otimes_+$]
Let $h$ be a positive integer , we define the extended tensor product operation $\otimes_+$ as:
$$
|a\rangle \otimes_+ |b\rangle= \begin{pmatrix} |a\rangle \otimes |b\rangle \\ \mathbf{0}_h  \end{pmatrix} \in \mathcal{H}_{mn+h}.
$$
\end{definition}

\begin{definition}[Parameterized Tensor Product $\otimes_r$]
For each index $r \in \{0,1,\ldots,h-1\}$, we define the operation $\otimes_r$ as:
$$
|a\rangle \otimes_r |c\rangle= \begin{pmatrix} |a\rangle \otimes |c\rangle_{[m]} \\ c_m|r\rangle \end{pmatrix} \in \mathcal{H}_{mn+h},
$$
where  $|r\rangle \in \mathcal{H}_{h}$ is the $r$-th standard basis vector ($|r\rangle = (0,\ldots,0,1,0,\ldots,0)^\top$  with  $r$-th component equals to  $1$).
\end{definition}

\begin{construction}[Singular direct product construction]\label{con:1}
Let \(n,m,h\) be positive integers with \(n \ge h\).  Assume that:
\begin{itemize}
\item[(i)] There exists a classical \(\text{QLS}(n)\) \(A\) containing \(n\) pairwise disjoint transversals.

\item[(ii)] There exist \(n-h\) \(\text{QLS}(m)\text{s}\) \(B_{q}, q=0,\ldots,n-h-1\).

\item[(iii)] There exist \(h\) \(\text{IQLS}(m+1,1)\text{s}\) \(C_{r}, r = 0,\ldots,h-1\).

\item[(iv)] There exists a \(\text{QLS}(h)\) \(D\).
\end{itemize}
Then there exists a \(\text{QLS}(mn+h)\) \(M\).
\end{construction}

\begin{proof} Let \( A=(|a_{i,j}\rangle) \) be a classical \( \text{QLS}(n) \). If \( |a_{i,j} \rangle \) is in the \( p \)-th transversal, it is  denoted as \( |a_{p,i,j} \rangle \), where  \(|a_{p,i,j}\rangle \in \left\{ |e\rangle : e=0,1,\ldots,n-1 \right\}\). For the construction, we partition \(C_{r}\), for \( r=0,\ldots,h-1 \), into the following submatrices: an \(m\times m\) upper-left block \(C_{r,1}\), a \(1\times m\) lower-left block \(C_{r,2}\), and an \(m\times 1\) upper-right block \(C_{r,3}\). Then we construct a matrix \(M\) of order \(mn+h\).

The upper-left \(mn\times mn\) submatrix of \(M\) is partitioned into \(n^{2}\) blocks of size \(m\times m\). For \(i,j=0,\ldots,n-1\), the \((i,j)\)-th block \(M_{i,j}\) is defined by
\[
M_{i,j}=
\begin{cases}
|a_{q,i,j}\rangle \otimes_{+} B_{q}, &
\text{if } |a_{i,j}\rangle \text{ is in the \(q\)-th transversal},\\[2mm]
|a_{n-h+r,i,j}\rangle \otimes_{r} C_{r,1}, &
\text{if } |a_{i,j}\rangle \text{ is in the \((n-h+r)\)-th transversal}.
\end{cases}
\]
The lower-left \(h\times mn\) submatrix of \(M\) is partitioned into \(hn\) blocks of size \(1\times m\).
The $j$-th block in the \( (mn+r) \)-th row row is defined as
$
M_{mn+r,j}=|a_{n-h+r,i,j}\rangle \otimes_{r} C_{r,2}.
$
Similarly, the upper-right \(mn\times h\) submatrix is partitioned into \(hn\) blocks of size \(m\times 1\).
The $i$-th block in the \( (mn+r) \)-th column is defined as
$
M_{i,mn+r}=|a_{n-h+r,i,j}\rangle \otimes_{r} C_{r,3}.
$
And the  submatrix  of order $h$ in the lower-right corner of $M$ is denoted as $M_h$. The element in the $u$-th row and $v$-th column of $M_h$, for $u,v = 0,\dots,h-1$, is given by $ \begin{pmatrix} \mathbf{0}_{mn} \\ |d_{u,v}\rangle \end{pmatrix}$,
which corresponds to the $(mn + u)$-th row and $(mn + v)$-th column of $M$.

Based on the above construction, it has been proven that \(M\) is a $\text{QLS}(mn+h)$~\cite{Zhang3}.
\end{proof}

\begin{lemma}\label{lem:3.5}
Assume that the objects in Construction~\ref{con:1} satisfy:
\begin{itemize}
\item[(i)] \(\left|\bigcup_{q=0}^{n-h-1}B_q\right|=c_0\);

\item[(ii)] \(\left|\bigcup_{r=0}^{h-1}C_r\right|=c_1\), and \(|m\rangle\in C_{r}\) for all \(r = 0,\ldots,h-1\);

\item[(iii)] \(\left|\left(\bigcup_{r=0}^{h-1} C_r\right)\cap\left(\bigcup_{q=0}^{n-h-1} B_q^{+}\right)\right|=c_2\);

\item[(iv)] \(|D|=c_3\), and \(\{|r\rangle:r=0,\ldots,h-1\}\subseteq D\).
\end{itemize}
Then  \(M\)  is a \(\text{QLS}(mn+h)\)  with \(|M|=n(c_0+c_1-c_2-1)+c_3\).
\end{lemma}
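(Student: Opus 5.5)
The plan is to read off $M$'s entries explicitly from the proof of Construction~\ref{con:1}, split them into $n+1$ classes according to where their support lies, and count each class separately. For $e\in\{0,\ldots,n-1\}$ let $M^{(e)}$ be the set of entries of $M$ whose projection onto the first $mn$ coordinates is nonzero and lies in $|e\rangle\otimes\mathcal H_m$. Let $M^{(\infty)}$ be the set of entries supported on the last $h$ coordinates. Every entry of the form $|e\rangle\otimes_+|b\rangle$, and every $|e\rangle\otimes_r|c\rangle$ with $|c\rangle_{[m]}\neq\mathbf 0_m$, lies in exactly one $M^{(e)}$. The remaining entries, namely $|e\rangle\otimes_r|m\rangle=\binom{\mathbf 0_{mn}}{|r\rangle}$ and the entries $\binom{\mathbf 0_{mn}}{|d_{u,v}\rangle}$ of $M_h$, lie in $M^{(\infty)}$. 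Vectors in different classes have different supports on the computational basis, so by Lemma~\ref{lem:2.4} they are distinct. Hence
\[
|M| = \sum_{e}|M^{(e)}| + |M^{(\infty)}|.
\]

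For $M^{(\infty)}$: the entries $\binom{\mathbf 0_{mn}}{|r\rangle}$ coming from $|m\rangle\in C_r$ coincide with entries of $M_h$, because $\{|r\rangle\}\subseteq D$ by (iv). So $|M^{(\infty)}|=|D|=c_3$.

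For a fixed $e$, I first check which vectors actually occur. Since $A$ is classical and each transversal contains every symbol exactly once, the symbol $e$ appears in exactly one cell of the $q$-th transversal for every $q$. Thus $|e\rangle\otimes_+ B_q$ occurs in full for each $q$, contributing $\{|e\rangle\otimes_+|b\rangle : |b\rangle\in\bigcup_q B_q\}$. Likewise, for each $r$, the $e$-cell of the $(n-h+r)$-th transversal lies in some row $i$ and column $j$. That cell supplies $C_{r,1}$, and the border blocks $M_{mn+r,j}$ and $M_{i,mn+r}$ supply $C_{r,2}$ and $C_{r,3}$, all tensored with the same $|e\rangle$. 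So every $|e\rangle\otimes_r|c\rangle$ with $|c\rangle\in C_r$ occurs. The map $|b\rangle\mapsto|e\rangle\otimes_+|b\rangle$ is injective up to phase, giving $c_0$ vectors.

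A vector $|e\rangle\otimes_r|c\rangle$ with $c_m=0$ equals $|e\rangle\otimes_+|c\rangle_{[m]}$, independently of $r$. It coincides with an already counted vector exactly when $|c\rangle\in\bigcup_q B_q^{+}$, and there are $c_2$ such $|c\rangle$. A vector with $c_m\neq 0$ has a nonzero coordinate in the $|r\rangle$ slot, so by Lemma~\ref{lem:2.4} it is new. Finally $|c\rangle=|m\rangle$ has been moved to $M^{(\infty)}$. This gives $|M^{(e)}|=c_0+(c_1-1)-c_2$. Summing over the $n$ values of $e$ and adding $c_3$ yields the formula.

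The delicate step is the identification across different $r$. Two vectors $|e\rangle\otimes_r|c\rangle$ and $|e\rangle\otimes_{r'}|c\rangle$ with $r\neq r'$ and $c_m\neq 0$ are distinct in $M$, yet they are counted once in $|\bigcup_r C_r|=c_1$. So the count is exact only if no vector with nonzero last coordinate lies in two different $C_r$. I would make this explicit, since it is guaranteed in the intended applications by choosing the $C_r$ via Lemma~\ref{lem:2.10}/Corollary~\ref{cor:2.11} to meet only in $|m\rangle$. I would also verify phase-equivalence carefully, since identity is only up to a global phase; here Lemma~\ref{lem:2.4} applied coordinatewise suffices.
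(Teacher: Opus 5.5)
Your count is the paper's count, organized more cleanly. The paper sets $X_e$ to be all the vectors $|e\rangle\otimes_+B_q$ and $|e\rangle\otimes_rC_r$ and claims $|X_e|=c_0+c_1+h-c_2-1$. It then does inclusion--exclusion using $X_e\cap X_{e'}=H$ and $\bigl(\bigcup_eX_e\bigr)\cap M_h=H$. You instead absorb $H$ into the $M_h$ class from the start, which (iv) justifies. You also verify, via the transversal structure, that every vector of $|e\rangle\otimes_rC_r$ (including the border blocks) actually occurs, which the paper takes for granted.

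Your caveat is correct, and it is a genuine gap in the paper's own proof, not only in yours. The paper's step $\bigl|\bigcup_r\{|e\rangle\otimes_rC_r\}\bigr|=c_1+h-1$ holds exactly when, for $r=0,\ldots,h-1$, the sets of vectors in $C_r\setminus\{|m\rangle\}$ with nonzero last coordinate are pairwise disjoint. The paper only observes that the shared $|m\rangle$ splits into $h$ images. It overlooks that every other shared vector with nonzero last coordinate splits as well. Without this hypothesis the lemma as stated is false: for $h=2$ and $C_0=C_1$ containing $k>0$ such vectors, one gets $|M|=n(c_0+c_1-c_2-1+k)+c_3$. So the disjointness condition belongs in the statement, and your proof establishes the corrected lemma.

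Your reassurance that the intended applications guarantee the condition is, however, not accurate. Lemma~\ref{lem:4.3} explicitly allows a repeated $C_r$ (``a repeated one contributes $0$''). For instance, $24\in S_2$ arises only as $C_0=C_1$, built from a $\text{QLS}(5)$ of cardinality $24$ in which $|4\rangle$ occurs exactly twice, so $|4\rangle$ occurs once in $C_0$. Each of rows $0,\ldots,3$ of an $\text{IQLS}(5,1)$ is an orthonormal basis of $\mathcal H_5$ whose column-$4$ entry has zero last coordinate. Hence at least three of these rows contain a vector other than $|4\rangle$ with nonzero last coordinate. Case~2 of Lemma~\ref{lem:4.6} therefore applies the formula exactly where it fails. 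Those downstream uses would have to be redone, for example with genuinely distinct $C_r$ meeting only in $|m\rangle$.
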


\begin{proof}
In the classical \(\text{QLS}(n)\), each vector \(|e\rangle\in\{|0\rangle,|1\rangle,\ldots,|n-1\rangle\}\) appears exactly \(n\) times. By Construction~\ref{con:1}, the \(n-h\) occurrences of \(|e\rangle\) give rise to the blocks \(|e\rangle\otimes_{+}B_{q}\), \(q=0,\ldots,n-h-1\), via the tensor product \(\otimes_{+}\).  The remaining \(h\) occurrences correspond to the blocks \(|e\rangle\otimes_{r}C_{r}\) by \(\otimes_{r}\), \(r=0,\ldots,h-1\).   Define
\[
X_e
=
\left(
\bigcup_{q=0}^{n-h-1}
\{\,|e\rangle\otimes_{+}B_{q}\,\}
\right)
\cup
\left(
\bigcup_{r=0}^{h-1}
\{\,|e\rangle\otimes_{r}C_{r}\,\}
\right).
\]
Then the entries of \(M\) consist of the elements in \(\bigcup_{e=0}^{n-1} X_e\) and \(M_h= \left\{\begin{pmatrix}\mathbf{0}_{mn}\\|d_{u,v}\rangle\end{pmatrix}:|d_{u,v}\rangle\in D\right\}\).

First, we determine the cardinality of \(X_e\). Since distinct elements in \(\bigcup_{q=0}^{n-h-1} B_{q}\)  correspond to distinct elements in \(\bigcup_{q=0}^{n-h-1}\{\,|e\rangle\otimes_{+} B_{q}\,\}\), we have
\[
\left|
\bigcup_{q=0}^{n-h-1}\{\,|e\rangle\otimes_{+}B_{q}\,\}
\right|
=
\left|
\bigcup_{q=0}^{n-h-1} B_{q}
\right|
=c_0.
\]
Similarly, distinct elements in \(\bigcup_{r=0}^{h-1} C_{r}\) give  distinct elements in \(\bigcup_{r=0}^{h-1}\{\,|e\rangle\otimes_{r} C_{r}\,\}\),
except that the common element \(|m\rangle\in C_{r}\) is mapped to \(\begin{pmatrix}\mathbf 0_{mn}\\|r\rangle\end{pmatrix}\),
which is different for  different \(r=0,\ldots,h-1\). Denote $H=\left\{\begin{pmatrix}\mathbf 0_{mn}\\|r\rangle\end{pmatrix}:r=0,\ldots,h-1\right\}$. Hence
\[
\left|
\bigcup_{r=0}^{h-1}\{\,|e\rangle\otimes_{r} C_{r}\,\}
\right|
=
c_1+h-1.
\]
Moreover, since \(\left|\left(\bigcup_{r=0}^{h-1} C_{r}\right)\cap\left(\bigcup_{q=0}^{n-h-1}  B_{q}^{+}\right)\right|=c_2\), we have
\[
\left|
\left(
\bigcup_{r=0}^{h-1}\{\,|e\rangle\otimes_{r} C_{r}\,\}
\right)
\cap
\left(
\bigcup_{q=0}^{n-h-1}\{\,|e\rangle\otimes_{+} B_{q}\,\}
\right)
\right|
=c_2.
\]
Therefore, $|X_e|=c_0+c_1+h-c_2-1$.

Next, we consider the cardinality of \(\bigcup_{e=0}^{n-1} X_e\).  For \(|e\rangle\neq |e'\rangle\), we  have
\[
\left(
\bigcup_{q=0}^{n-h-1}\{\,|e\rangle\otimes_{+} B_{q}\,\}
\right)
\cap
\left(
\bigcup_{q=0}^{n-h-1}\{\,|e'\rangle\otimes_{+} B_{q}\,\}
\right)
=\emptyset,
\]
\[
\left(
\bigcup_{q=0}^{n-h-1}\{\,|e\rangle\otimes_{+} B_{q}\,\}
\right)
\cap
\left(
\bigcup_{r=0}^{h-1}\{\,|e'\rangle\otimes_{r} C_{r}\,\}
\right)
=\emptyset,
\]
and
\[
\left(
\bigcup_{r=0}^{h-1}\{\,|e\rangle\otimes_{r}C_{r}\,\}
\right)
\cap
\left(
\bigcup_{r=0}^{h-1}\{\,|e'\rangle\otimes_{r} C_{r}\,\}
\right)
=
H.
\]
Hence, \(X_e\cap X_{e'}=H\) for \(|e\rangle\neq |e'\rangle\).
Therefore, \[\left|\bigcup_{e=0}^{n-1} X_e\right|=n(c_0+c_1+h-c_2-1)-h(n-1)=n(c_0+c_1-c_2-1)+h.\]

Finally, we compute  \(\left|\left(\bigcup_{e=0}^{n-1} X_e\right)\cup M_h\right|\).  Since \(\{|r\rangle:r=0, \ldots,h-1\}\subseteq D\), we have  \(H\subseteq M_h\). Clearly,
\[
\left(
\bigcup_{q=0}^{n-h-1}\{\,|e\rangle\otimes_{+} B_{q}\,\}
\right)
\cap M_h
=
\emptyset,
\]
and
\[
\left(
\bigcup_{r=0}^{h-1}\{\,|e\rangle\otimes_{r} C_{r}\,\}
\right)
\cap M_h
=
H.
\]
Then
\[
\left(
\bigcup_{e=0}^{n-1} X_e
\right)
\cap M_h
=
H.
\]
Moreover, \(|M_h|=|D|=c_3\). Therefore, \(|M|=n(c_0+c_1-c_2-1)+c_3\).
\end{proof}

\begin{lemma}\label{lem:3.6}
Assume that the objects in Construction~\ref{con:1} satisfy:
\begin{itemize}
\item[(i)] \(\left|\bigcup_{q=0}^{n-h-1}B_q\right|=c_0\);

\item[(ii)] \(\left|\bigcup_{r=0}^{h-1}C_r\right|=c_1\), and \(|m\rangle\notin C_{r}\) for all \(r = 0,\ldots,h-1\);

\item[(iii)] \(\left|\left(\bigcup_{r=0}^{h-1} C_r\right)\cap\left(\bigcup_{q=0}^{n-h-1} B_q^{+}\right)\right|=c_2\);

\item[(iv)] \(|D|=c_3\).
\end{itemize}
Then  \(M\)  is a \(\text{QLS}(mn+h)\)  with \(|M|=n(c_0+c_1-c_2)+c_3\).
\end{lemma}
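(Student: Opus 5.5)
The argument repeats the proof of Lemma~\ref{lem:3.5}, now without a shared vector $|m\rangle$. The entries of $M$ again split into the sets $X_e$, $e=0,\ldots,n-1$, defined exactly as before, together with the corner set
\[
M_h=\left\{\begin{pmatrix}\mathbf 0_{mn}\\|d_{u,v}\rangle\end{pmatrix}:|d_{u,v}\rangle\in D\right\}.
\]
I will compute $|X_e|$, then show that the $X_e$ are pairwise disjoint, and finally show that every $X_e$ is disjoint from $M_h$.

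\emph{Step 1: $|X_e|$.} Exactly as in Lemma~\ref{lem:3.5}, the map $|b\rangle\mapsto|e\rangle\otimes_+|b\rangle$ is injective on states, so $\bigcup_q\{|e\rangle\otimes_+B_q\}$ has $c_0$ elements. For the $C_r$ part, $|e\rangle\otimes_r|c\rangle$ has first block $|e\rangle\otimes|c\rangle_{[m]}$ and tail $c_m|r\rangle$.
\begin{itemize}
\item If two vectors $|c\rangle\in C_r$ and $|c'\rangle\in C_{r'}$ give the same state, their first blocks agree up to a phase. Their tails are then both zero or both nonzero.
\item When both tails are nonzero, equality of states forces $r=r'$, and $|c\rangle=|c'\rangle$ up to a phase.
\item When both tails are zero, the two vectors are $\begin{pmatrix}|c\rangle_{[m]}\\0\end{pmatrix}$ and $\begin{pmatrix}|c'\rangle_{[m]}\\0\end{pmatrix}$. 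These are genuinely equal states in $\mathcal H_{m+1}$, independent of $r$.
\end{itemize}
Hence distinct states in $\bigcup_r C_r$ give distinct states. The only collapse in Lemma~\ref{lem:3.5} came from $|m\rangle$, whose $[m]$-part is zero, and this is now excluded by hypothesis (ii). So this part has exactly $c_1$ elements.

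Its overlap with the $\otimes_+$ part consists of the $\otimes_r$-images with $c_m=0$, i.e.\ the images of $C_r$-entries lying in $\bigcup_q B_q^+$. There are $c_2$ of these, so $|X_e|=c_0+c_1-c_2$.

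\emph{Step 2: the $X_e$ are pairwise disjoint.} Take $|e\rangle\ne|e'\rangle$ computational vectors. Since $|c\rangle_{[m]}\ne\mathbf 0_m$ for every entry of every $C_r$, each element of $X_e$ has a nonzero component in the block $|e\rangle\otimes\mathcal H_m$. Each element of $X_{e'}$ is supported on $(|e'\rangle\otimes\mathcal H_m)\oplus\mathcal H_h$ and so vanishes in that block. Therefore $X_e\cap X_{e'}=\emptyset$, and $\bigl|\bigcup_e X_e\bigr|=n(c_0+c_1-c_2)$.

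\emph{Step 3: disjointness from the corner.} Every element of $M_h$ has zero first $mn$ coordinates. Every element of $\bigcup_e X_e$ has nonzero first $mn$ coordinates, again because no $C_r$ entry has vanishing $[m]$-part. So the union is disjoint from $M_h$, and since $|M_h|=|D|=c_3$ we get $|M|=n(c_0+c_1-c_2)+c_3$.

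The step needing care is Step 1. There one must check that the $\otimes_r$-images of entries of different $C_r$ coincide exactly when the entries have last coordinate zero and are equal as states, so that this case is counted by $c_2$ and is not double counted. This is the same bookkeeping as in Lemma~\ref{lem:3.5}, and I expect it to follow directly from comparing the tail components.
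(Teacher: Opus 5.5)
Your outline (count $|X_e|$, show the $X_e$ are pairwise disjoint, show they avoid $M_h$) is the paper's. Steps 2 and 3 are fine, since (ii) does force $|c\rangle_{[m]}\neq\mathbf 0_m$ for every entry of every $C_r$. The gap is the last inference of Step 1. Your bullets show that if two $\otimes_r$-images coincide, the underlying entries are equal as states. That makes ``image $\mapsto$ underlying state'' a well-defined surjection onto $\bigcup_r C_r$, which only yields $\bigl|\bigcup_r\{|e\rangle\otimes_r C_r\}\bigr|\ge c_1$. Equality needs each state of $\bigcup_r C_r$ to have just one image, and your own analysis shows this can fail. If $|c\rangle\in C_r\cap C_{r'}$ with $r\neq r'$ and $c_m\neq 0$, then $|e\rangle\otimes_r|c\rangle$ and $|e\rangle\otimes_{r'}|c\rangle$ have tails $c_m|r\rangle$ and $c_m|r'\rangle$, so they are distinct states. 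The correct count is
\[
\Bigl|\bigcup_{r=0}^{h-1}\{|e\rangle\otimes_r C_r\}\Bigr|
=\bigl|\{|c\rangle\in{\textstyle\bigcup_{r}C_r}:\ c_m=0\}\bigr|
+\sum_{r=0}^{h-1}\bigl|\{|c\rangle\in C_r:\ c_m\neq 0\}\bigr|,
\]
which equals $c_1$ exactly when no state with nonzero last coordinate lies in two different $C_r$. So the danger is not that coinciding images get double counted, as your closing remark suggests. It is that one shared entry produces several distinct images.

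This case is not vacuous. Take $h\ge 2$ and $C_0=C_1$, which Construction~\ref{con:1} permits. Row $0$ of $C_0$ is an orthonormal basis of $\mathcal H_{m+1}$, so it contains an entry with $c_m\neq 0$, and by (ii) this entry is not $|m\rangle$. Hence $|X_e|>c_0+c_1-c_2$ and the stated formula undercounts $|M|$. The lemma therefore needs the extra hypothesis $C_r\cap C_{r'}\subseteq\{|c\rangle: c_m=0\}$ for $r\neq r'$, and under it your proof goes through as written. The paper's one-line proof asserts $|X_e|=c_0+c_1-c_2$ under the same unstated assumption. So your argument matches the paper's, but neither justifies this step. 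Note also that the paper later applies the lemma with repeated $C_r$ (e.g.\ the values $20,23,24\in T_2$ of Lemma~\ref{lem:4.4}), which is exactly the failing case.
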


\begin{proof} The proof is similar to that of Lemma~\ref{lem:3.5}. Since \(|m\rangle\notin C_{r}\) for all \(r = 0,\ldots,h-1\),   we have \(|X_e|=c_0+c_1-c_2\) and \(X_e \cap X_{e'} =\emptyset\) for \(e \ne e'\).  Hence \(\left|\bigcup_{e=0}^{n-1}X_e\right|=n(c_0+c_1-c_2)\). Furthermore, \(\left(\bigcup_{e=0}^{n-1} X_e\right)\cap M_h=\emptyset \) and \(|M_h|=c_3\). Therefore, \(|M|=n(c_0+c_1-c_2)+c_3\).
\end{proof}

\begin{construction}[Direct product construction] \label{con:2}
Suppose that there exists a classical \(\text{QLS}(m)\) and \(m\) \(\text{QLS}(n)\text{s}\) with \(c\)  distinct elements. Then there exists a \(\text{QLS}(mn)\) with cardinality \(mc\).
\end{construction}

\begin{proof} Let \( A=(|a_{i,j} \rangle ) \) be a classical \( \text{QLS}(m) \), \(|a_{i,j}\rangle\in\{|0\rangle,\ldots,|m-1\rangle\}\).  Let \(B_{i} = (|b^{i}_{k,l}\rangle)\), \(i=0,\ldots,m-1\), be \(\text{QLS}(n)\text{s}\)   with \(\left|\bigcup_{i=0}^{m-1}B_i\right|=c\). We construct a matrix \(M\) of order \(mn\), partitioned into \({m}^{2}\) blocks of size \(n \times n\). For \(i,j=0,\ldots,m-1\), the \((i,j)\)-th block \(M_{i,j}\) is defined by
\(M_{i,j}=|a_{i,j}\rangle \otimes B_{i}\). Consider any two elements \( |a_{i,j}\rangle \otimes |b^{i}_{k,l}\rangle \) and \(|a_{i,j'}\rangle \otimes |b^{i}_{k,l'}\rangle \) in the \((in+k)\)-th row of \( M \),
\[
\begin{aligned}
\left(|a_{i,j}\rangle \otimes |b^{i}_{k,l}\rangle,|a_{i,j'}\rangle \otimes |b^{i}_{k,l'}\rangle\right)
&=\left(|a_{i,j}\rangle,|a_{i,j'}\rangle\right)\left(|b^{i}_{k,l}\rangle, |b^{i}_{k,l'}\rangle\right)\\
&=\delta_{jj'}\delta_{ll'}.
\end{aligned}
\]
Hence the row vectors of \(M\) form an orthonormal basis of \(\mathcal H_{mn}\). Similarly, the column vectors of \(M\) also form an orthonormal basis. Therefore, \(M\) is a \(\text{QLS}(mn)\).

For \(|e\rangle\in\{|0\rangle,\ldots,|m-1\rangle\}\), define \(X_e=\bigcup_{i=0}^{m-1}\{\,|e\rangle\otimes B_i\,\}\). Then \(|X_e|=c\), and \(X_e\cap X_{e'}=\emptyset\) for \(e\ne e'\). Hence \(|M|=\left|\bigcup_{e=0}^{m-1}X_e\right|=mc\).
\end{proof}

\section{Proof of Theorem~\ref{th:1.2}}

In this section, we prove Theorem~\ref{th:1.2}. We begin with several necessary definitions and preliminary lemmas.

\begin{definition}
Let \(I\) and \(J\) be two sets of integers. Define
\[
I+J=\{i+j:\ i\in I,\ j\in J\}.
\]
\end{definition}

\begin{lemma}\label{lem:4.1}
For \(u\ge 3\) and every integer \(c\in [4,16u]\setminus\{5\}\), there exist \(u\) \(\text{QLS}(4)\text{s}\) \(B_0,\ldots,B_{u-1}\) such that
\(\left|\bigcup_{q=0}^{u-1} B_q\right|=c\).
\end{lemma}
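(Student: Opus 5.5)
We need, for $u\ge3$ and each $c\in[4,16u]\setminus\{5\}$, $u$ QLS(4)s whose union has exactly $c$ distinct vectors. The available building blocks are the single-square cardinalities $\{4,6,8,16\}$ (from \cite{Paczos,Zhang1}), the disjointness tool Corollary~\ref{cor:2.7}, and the fact that several copies of the same square add nothing to the union. The union of $u$ squares can be assembled additively, so the plan is to realise $c$ as a sum $c=s_1+\dots+s_t$ with $t\le u$ and each $s_i\in\{4,6,8,16\}$, taking the squares pairwise disjoint. The remaining $u-t$ slots are filled with repeated copies of one of the squares already used, which does not change the union. Corollary~\ref{cor:2.7} only handles two cardinalities at a time, but the inductive proof of Lemma~\ref{lem:2.6} applies verbatim to any finite list of QLS(4)s: choose each successive unitary to avoid the finite union $Y$ of all previous squares. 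This gives disjoint squares with any prescribed list of cardinalities.

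Sums alone do not reach every $c$, since all four values are even. Odd values and small gaps need overlapping squares. The key extra ingredient is a collection of \emph{overlap gadgets}: pairs or small families of QLS(4)s sharing a controlled number of vectors. The natural choice is to fix the classical square $L$ on $\{|0\rangle,\dots,|3\rangle\}$ and use squares of cardinality $6$ or $8$ that contain some of the basis vectors. For example, the order-4 analogue of Example~\ref{ex:1}, with $|\pm\rangle$ in a $2\times2$ subsquare, contains $|2\rangle,|3\rangle$ and has cardinality 6 (or 8 with a second mixed pair). Applying permutation matrices or unitaries acting only on $\operatorname{span}\{|0\rangle,|1\rangle\}$ to such squares, as in Lemma~\ref{lem:2.9}, yields families whose pairwise intersections are exactly $\{|2\rangle,|3\rangle\}$ or a single basis vector. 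From these we obtain, with two or three squares, unions of small odd sizes such as $7,9,10,11,13,\dots$. For instance, $L$ together with a 6-square sharing $|2\rangle,|3\rangle$ gives $8$, and a 6-square sharing three of the basis vectors would give $7$.

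The bookkeeping then proceeds by cases. First, for $c\le 16$ or so, we exhibit explicit configurations of at most 3 squares for each $c\in[4,\dots]\setminus\{5\}$; this requires checking the small odd values $7,9,11,13,15$. Second, for larger $c$ we write $c=16k+r$ with a base configuration realising $r$, or $r+16$, in at most 3 squares, and add $k$ disjoint 16-squares. The constraint to verify is that the total number of squares is at most $u$. Near the top, $c$ close to $16u$ forces almost all squares to be full 16-squares, so $c=16u-d$ for small $d$ must come from one or two squares of smaller cardinality or with overlaps. For example, $16(u-1)+s$ with $s\in\{4,6,8\}$, and two 16-squares sharing one vector give $31$ mod 16 adjustments. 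This is why $u\ge3$ is assumed.

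The main obstacle is the range just below $16u$: values $16u-1,16u-2,\dots,16u-11$ and their odd cases, where two slots must absorb the deficit. I would first try overlapping full-cardinality squares, i.e.\ QLS(4)s with $c=16$ sharing exactly $j$ vectors, obtained by taking one such square and rotating it by a unitary fixing a chosen $j$-subset (via the genericity argument of Lemma~\ref{lem:2.9}). This gives unions $32-j$ for pairs and fills the deficits. Whether a 16-square can share an arbitrary prescribed number of vectors with another is the delicate point. If that fails, I would fall back on combining a 16-square with 6- and 8-squares having specified common basis vectors.
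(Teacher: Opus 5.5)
Your additive skeleton matches the paper's: disjoint squares via Lemma~\ref{lem:2.6}, single-vector overlaps via Corollary~\ref{cor:2.11}, repeated copies, and induction by adjoining a disjoint cardinality-$16$ square. However, the step you leave open as ``the delicate point'' is the real content of the lemma, so there is a genuine gap. Take $u=3$, which the statement covers. Any three squares that include one of cardinality at most $8$ have union at most $16+16+8=40$. The only overlaps your tools actually provide between cardinality-$16$ squares have size $0$ or $1$ (Corollaries~\ref{cor:2.7} and~\ref{cor:2.11}), so three such squares give only $46,47,48$. Hence $41,\dots,45$ are out of reach. They require two cardinality-$16$ squares meeting in exactly $j$ vectors for several $j\ge 2$. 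The same obstruction hits every value in $[16u-7,\,16u-u]$ for $3\le u\le 7$, so taking $u$ large does not remove it.

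Your proposed remedy does not supply these pairs. Lemma~\ref{lem:2.9} needs the generically rotated block to have dimension at least $3$, because it rests on Lemma~\ref{lem:2.3}, which requires $n\ge 3$. So in $\mathcal H_4$ the fixed block is one-dimensional and contains at most one entry of a cardinality-$16$ square. Moreover, a unitary cannot ``fix a chosen $j$-subset'' at will. It must act as a scalar on subspaces containing those entries, so $j$ is dictated by how the entries of a particular square sit in those subspaces. Any further coincidences $U|a\rangle=e^{\mathrm{i}\theta}|b\rangle$ must then be excluded by direct computation. Your fallback with $6$- and $8$-squares is ruled out by the bound $40$ above.

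The paper fills the gap with explicit constructions. It takes a specific rational cardinality-$16$ $\text{QLS}(4)$ $B_0$ whose first row is the computational basis, so the rest of its first column lies in $|0\rangle^{\perp}$. It applies $\mathrm{diag}(-1,1,1,1)$, the swap of $|2\rangle$ and $|3\rangle$, $\mathrm{diag}(-1,-1,1,1)$, and two orthogonal maps with entries $\pm\frac35,\pm\frac45$ on a coordinate plane. These give $|B_0\cap U_tB_0|=7,5,4,3,2$, i.e.\ pair unions $25,27,28,29,30$. It also exhibits a cardinality-$4$ square meeting $B_0$ in exactly $\{|0\rangle,|1\rangle\}$ (union $18$). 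Finally it uses the triple $B_0,U_{28}B_0,U_{30}B_0$ (union $42$), needed separately because $26$ is not a pair union. With these, pairs cover $[4,32]\setminus\{5,17,26\}$, three squares cover $[4,48]\setminus\{5\}$, and your inductive step goes through.

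Incidentally, $c=7$ needs no $6$-square meeting $L$ in three vectors: two classical squares meeting in one vector via Corollary~\ref{cor:2.11} suffice.
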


\begin{proof}
It is known that the possible cardinalities of \(\text{QLS}(4)\text{s}\) are \( \{4,6,8,16\}\). By Corollaries~\ref{cor:2.7}  and~\ref{cor:2.11},  for any \(c_0,c_1\in \{4,6,8,16\}\), there exist two \(\text{QLS}(4)\text{s}\) \(B_0,B_1\) with \(|B_0|=c_0\), \(|B_1|=c_1\), and  \(|B_0\cap B_1|\in\{0,1\}\).

Define \(d=|B_1|-|B_0\cap B_1|\), then \(|B_0\cup B_1|=|B_0|+d\).  Since \(|B_0\cap B_1|\in\{0,1\}\),   we obtain
\(d\in \{c_1,\,c_1-1:\ c_1\in\{4,6,8,16\}\}\cup\{0\}\), and the case \(B_0=B_1\) yields \(d=0\). Hence \(d\in \{0,3,4,5,6,7,8,15,16\}\). Therefore, the attainable cardinalities for two \(\text{QLS}(4)\text{s}\) are
\[
\{4,6,8,16\} +\{0,3,4,5,6,7,8,15,16\}=[4,32]\setminus\{5,17,18,25,26,27,28,29,30\}.
\]

It remains to show that the values in \(\{18,25,27,28,29,30\}\) are attainable. Let
\[
\scriptsize
B_0=
\begin{array}{|c|c|c|c|}
\hline
|0\rangle
&
|1\rangle
&
|2\rangle
&
|3\rangle
\\
\hline
\frac13|1\rangle+\frac23|2\rangle-\frac23|3\rangle
&
\frac13|0\rangle+\frac23|2\rangle+\frac23|3\rangle
&
\frac23|0\rangle-\frac23|1\rangle-\frac13|3\rangle
&
\frac23|0\rangle+\frac23|1\rangle-\frac13|2\rangle
\\
\hline
\frac{2}{15}|1\rangle-\frac{11}{15}|2\rangle-\frac23|3\rangle
&
\frac{2}{15}|0\rangle+\frac23|2\rangle-\frac{11}{15}|3\rangle
&
-\frac{11}{15}|0\rangle-\frac23|1\rangle-\frac{2}{15}|3\rangle
&
\frac23|0\rangle-\frac{11}{15}|1\rangle-\frac{2}{15}|2\rangle
\\
\hline
\frac{14}{15}|1\rangle-\frac{2}{15}|2\rangle+\frac13|3\rangle
&
\frac{14}{15}|0\rangle-\frac13|2\rangle-\frac{2}{15}|3\rangle
&
-\frac{2}{15}|0\rangle+\frac13|1\rangle-\frac{14}{15}|3\rangle
&
-\frac13|0\rangle-\frac{2}{15}|1\rangle-\frac{14}{15}|2\rangle
\\
\hline
\end{array},
\]

\[
\scriptsize
B_1=
\begin{array}{|c|c|c|c|}
\hline
|0\rangle
&
|1\rangle
&
\frac{1}{\sqrt2}(|2\rangle+|3\rangle)
&
\frac{1}{\sqrt2}(|2\rangle-|3\rangle)
\\
\hline
|1\rangle
&
\frac{1}{\sqrt2}(|2\rangle+|3\rangle)
&
\frac{1}{\sqrt2}(|2\rangle-|3\rangle)
&
|0\rangle
\\
\hline
\frac{1}{\sqrt2}(|2\rangle+|3\rangle)
&
\frac{1}{\sqrt2}(|2\rangle-|3\rangle)
&
|0\rangle
&
|1\rangle
\\
\hline
\frac{1}{\sqrt2}(|2\rangle-|3\rangle)
&
|0\rangle
&
|1\rangle
&
\frac{1}{\sqrt2}(|2\rangle+|3\rangle)
\\
\hline
\end{array}.
\]
Then  \(B_0\) and \(B_1\) are two \(\text{QLS}(4)\text{s}\) with \(|B_0|=16\), \(|B_1|=4\), and \(B_0\cap B_1=\{|0\rangle,|1\rangle\}\).  Hence \(|B_0\cup B_1|=18\). For \(t\in\{25,27,28,29,30\}\), define \(B_1=U_tB_0\),  where the unitary matrices \(U_t\) are given as follows:
\[
\footnotesize

\begin{array}{ccc}
U_{25}=
\begin{pmatrix}
-1&0&0&0\\
0&1&0&0\\
0&0&1&0\\
0&0&0&1
\end{pmatrix},
&
\quad
U_{27}=
\begin{pmatrix}
1&0&0&0\\
0&1&0&0\\
0&0&0&1\\
0&0&1&0
\end{pmatrix},
&
\quad
U_{28}=
\begin{pmatrix}
-1&0&0&0\\
0&-1&0&0\\
0&0&1&0\\
0&0&0&1
\end{pmatrix},
\\[5mm]
U_{29}=
\begin{pmatrix}
-\frac35&0&0&\frac45\\
0&1&0&0\\
0&0&1&0\\
\frac45&0&0&\frac35
\end{pmatrix},
&
\quad
U_{30}=
\begin{pmatrix}
-\frac35&-\frac45&0&0\\
\frac45&-\frac35&0&0\\
0&0&1&0\\
0&0&0&1
\end{pmatrix}.
&
\end{array}
\]
Since each \(U_t\) is unitary, \(U_t B_0\) is also a \(\text{QLS}(4)\) with \(|U_t B_0|=16\). Moreover,
\[
\begin{aligned}
|B_0\cap U_{25}B_0|&=7,  & |B_0\cup U_{25}B_0|&=25,\\
|B_0\cap U_{27}B_0|&=5,  & |B_0\cup U_{27}B_0|&=27,\\
|B_0\cap U_{28}B_0|&=4,  & |B_0\cup U_{28}B_0|&=28,\\
|B_0\cap U_{29}B_0|&=3,  & |B_0\cup U_{29}B_0|&=29,\\
|B_0\cap U_{30}B_0|&=2,  & |B_0\cup U_{30}B_0|&=30.
\end{aligned}
\]
Consequently, every integer in \([4,32]\setminus\{5,17,26\}\) is  attainable for two \(\text{QLS}(4)\text{s}\).

For three \(\text{QLS}(4)\text{s}\), either with repetition allowed or by adjoining a disjoint \(\text{QLS}(4)\) with cardinality in \(\{4,6,8,16\}\) by Lemma~\ref{lem:2.6}, we obtain
\[
\bigl([4,32]\setminus\{5,17,26\}\bigr)+\{0,4,6,8,16\}= [4,48]\setminus\{5,42\}.
\]
The remaining value \(42\) is obtained by taking \(B_1=U_{28}B_0\) and \(B_2=U_{30}B_0\), which yields \(|B_0\cup B_1\cup B_2|=42\). Hence every integer in \([4,48]\setminus\{5\}\) is attainable by three \(\text{QLS}(4)\text{s}\).

We proceed by induction on \(u\ge 3\). Assume that every integer in \([4,16u]\setminus\{5\}\) is attainable by \(u\) \(\text{QLS}(4)\text{s}\). For \(u+1\), all values in \([4,16u]\setminus\{5\}\) remain attainable by taking a repeated \(\text{QLS}(4)\). Moreover, adjoining a disjoint \(\text{QLS}(4)\) of cardinality \(16\) yields additional attainable values in \([20,16u+16]\setminus\{21\}\). Therefore, the attainable cardinalities for \(u+1\) \(\text{QLS}(4)\text{s}\) are
\[
([4,16u]\setminus\{5\})\cup([20,16u+16]\setminus\{21\})=[4,16(u+1)]\setminus\{5\}.
\]
This completes the induction.
\end{proof}

Recently, Zhang et al.~\cite{Zhang2} proved that, for any integers \(u\geq 2\) and   \(c \in [4u,16u^2]\setminus\{4u+1\}\),
there exists a \(\text{QLS}(4u)\) of cardinality \(c\). For self-contained we provide an alternative proof of this result.

\begin{lemma}\label{lem:4.2}
For any integers $u \ge 2$ and  \(c \in [4u,\,16u^2] \setminus \{\,4u+1\,\}\), there exists a $\text{QLS}(4u)$ with cardinality $c$.
\end{lemma}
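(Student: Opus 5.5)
The natural route is the direct product construction (Construction~\ref{con:2}) with $m=u$ and $n=4$. For every $u\ge 2$ there is a classical $\text{QLS}(u)$, which we take as $A$. Its $i$-th block row then uses the block $|a_{i,j}\rangle\otimes B_i$, where $B_0,\dots,B_{u-1}$ are $u$ $\text{QLS}(4)$s with $\left|\bigcup_i B_i\right|=c'$, and the resulting $\text{QLS}(4u)$ has cardinality $uc'$. This alone gives only the multiples of $u$, so the construction must be modified to realise every residue.

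The fix is to let the inner squares depend on the full block position rather than only on the block row. Put $M_{i,j}=|a_{i,j}\rangle\otimes B_{i,j}$, where the $B_{i,j}$ are arbitrary $\text{QLS}(4)$s.
\begin{itemize}
\item Orthogonality is unchanged. Two entries in the same row of $M$ lying in different blocks $j\ne j'$ are orthogonal because $|a_{i,j}\rangle\perp|a_{i,j'}\rangle$. Entries in the same block are orthogonal because $B_{i,j}$ is a $\text{QLS}(4)$. Columns are handled the same way, so $M$ is again a $\text{QLS}(4u)$.
\item Distinct symbols $e\ne e'$ of $A$ give disjoint sets $X_e=\bigcup_{(i,j):a_{i,j}=e}\{|e\rangle\otimes B_{i,j}\}$.
\item Hence $|M|=\sum_{e=0}^{u-1}c_e$, where $c_e=\left|\bigcup_{a_{i,j}=e}B_{i,j}\right|$ is the cardinality of a union of $u$ $\text{QLS}(4)$s.
\end{itemize}

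For $u\ge 3$, Lemma~\ref{lem:4.1} says each $c_e$ can be any value in $[4,16u]\setminus\{5\}$, independently for each symbol $e$. So it suffices to show that every $c\in[4u,16u^2]\setminus\{4u+1\}$ is a sum of $u$ terms from $S=[4,16u]\setminus\{5\}$. Write $c=4u+t$ with $t\in[0,16u^2-4u]$. If $t=0$, take all terms equal to $4$. If $t\ge 2$, greedily raise terms from $4$: first use as many terms $16u$ as possible, then a single remainder term. The only obstruction is a remainder term equal to $5$. In that case modify two terms, replacing a $16u$ and a $5$ by $16u-1$ and $6$, or $4,5$ by $3+$\dots. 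Precisely, use $4+1=5$ split as $6$ and $3$ taken from a neighbouring $16u$ turned into $16u-2$, which is legitimate since $16u-2\ge 6$. When there is no $16u$ term at all, we have $t=1$, which is exactly the excluded case $c=4u+1$. This case check is routine.

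The case $u=2$ does not fall under Lemma~\ref{lem:4.1}, which requires $u\ge3$. Here we need pairs of $\text{QLS}(4)$s, and the proof of Lemma~\ref{lem:4.1} already shows that the attainable union sizes are $T=[4,32]\setminus\{5,17,26\}$. We then need $c\in[8,64]\setminus\{9\}$ to be a sum of two elements of $T$, which holds by a direct check: odd values avoid forbidden pairs, for example $c=10=4+6$, and values near $64$ such as $63=32+31$ and $61=32+29$ are fine. Since $T+T$ covers $[8,64]\setminus\{9\}$, the case $u=2$ follows.

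The main obstacle is justifying the block-dependent direct product rigorously, in particular that the cardinality is additive over symbols. This holds because $|e\rangle\otimes|b\rangle=|e'\rangle\otimes|b'\rangle$ up to phase forces $e=e'$ and $|b\rangle=|b'\rangle$. After that, the remaining work is the additive-combinatorics bookkeeping above, which should be routine.
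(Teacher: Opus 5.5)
Your proof is correct and follows the paper's route: the direct product construction with $m=u$, $n=4$, fed by Lemma~\ref{lem:4.1} (and the two-square values $[4,32]\setminus\{5,17,26\}$ when $u=2$), followed by a $u$-fold sumset computation. Your block-dependent version $M_{i,j}=|a_{i,j}\rangle\otimes B_{i,j}$ is exactly what the paper's one-line proof tacitly needs, since Construction~\ref{con:2} as stated (with $B_i$ depending only on the row) gives only $uc$ rather than $u$-fold sums; in the sumset step, state the repair simply as $16u+5=(16u-1)+6$ and drop the later remark about $3$ and $16u-2$, whose arithmetic does not balance.
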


\begin{proof} By Lemma~\ref{lem:4.1}, the union of \(u\) \(\text{QLS}(4)\text{s}\) can attain every integer in \([4,32]\setminus\{5,17,26\}\) for \(u=2\), and in \([4,16u]\setminus\{5\}\) for \(u\ge 3\). Applying Construction~\ref{con:2} with \(m=u\) and \(n=4\), we obtain that the \(u\)-fold sums of these sets yield \([4u,16u^2]\setminus\{4u+1\}\). Hence every \(c\in[4u,16u^2]\setminus\{4u+1\}\) is attainable for \(\text{QLS}(4u)\).
\end{proof}

\begin{lemma}\label{lem:4.20}
For \(u\ge 3\) and every integer  \(c\in [5,25u]\setminus\{5\}\), there exist \(u\) \(\text{QLS}(5)\text{s}\) \(B_0,\ldots,B_{u-1}\) such that
\(\left|\bigcup_{q=0}^{u-1} B_q\right|=c\).
\end{lemma}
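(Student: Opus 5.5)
Since \(5=4+1\) with \(4\ge 3\), and \(5=3+2\) with \(3\ge 3\), all of Corollaries~\ref{cor:2.7}, \ref{cor:2.11} and~\ref{cor:2.12} apply to \(\text{QLS}(5)\text{s}\). I would follow the same scheme as in the proof of Lemma~\ref{lem:4.1}.

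\textbf{A caveat on the value \(6\).} Before starting, note that the union of any family of \(\text{QLS}(5)\text{s}\) can never have exactly \(6\) elements. Every \(\text{QLS}(5)\) has cardinality in \(\{5,7,12,21,24,25\}\), so a union of size \(6\) would consist only of squares of cardinality \(5\). A \(\text{QLS}(5)\) of cardinality \(5\) is a single orthonormal basis. Two orthonormal bases of \(\mathcal H_5\) lying inside a common \(6\)-element set share at least \(4\) vectors, and then their fifth vectors coincide as well, since each is the unit vector orthogonal to the same four. So the two bases are equal and the union has size \(5\), not \(6\). The excluded value in the statement should therefore presumably read \(6\), and I will aim to prove the existence for every \(c\in[5,25u]\setminus\{6\}\).

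\textbf{Two squares.} Take \(c_0,c_1\in\{5,7,12,21,24,25\}\) (the known cardinalities of \(\text{QLS}(5)\text{s}\)). The three corollaries give two \(\text{QLS}(5)\text{s}\) \(B_0,B_1\) with \(|B_0|=c_0\), \(|B_1|=c_1\) and \(|B_0\cap B_1|\in\{0,1,2\}\). Taking \(B_1=B_0\) gives \(|B_0\cap B_1|=c_0\). Setting \(d=|B_1|-|B_0\cap B_1|\), the achievable increments are
\[
d\in\{0\}\cup\{c_1,\,c_1-1,\,c_1-2:\ c_1\in\{5,7,12,21,24,25\}\}=\{0,3,4,5,6,7,10,11,12,19,20,21,22,23,24,25\}.
\]
I would then compute \(\{5,7,12,21,24,25\}+\{0,3,4,\dots,25\}\) inside \([5,50]\) and list the missing values other than \(6\). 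These will be a handful, such as \(16,18,37,38,39,40,41,42\).

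\textbf{Filling the gaps: the main obstacle.} For the missing values I would fix an explicit \(\text{QLS}(5)\) \(B_0\) of cardinality \(25\) whose first row is the computational basis. One candidate is the \(5\times5\) analogue of the displayed \(B_0\), built from an orthogonal design or Fourier-type vectors. I would then take \(B_1=U_tB_0\), where \(U_t\) is a sign change, a coordinate transposition, or a plane rotation by a \(3\)-\(4\)-\(5\) angle, in the style of \(U_{25},\dots,U_{30}\). Each such \(U_t\) fixes a prescribed number of entries, so \(|B_0\cap U_tB_0|\) takes the values needed to produce \(50-|B_0\cap U_tB_0|\) for the high gaps. Gaps of the form \(25+k\) or \(12+k\) with small \(k\) would use a low-cardinality square together with a \(25\)-square that shares some of its vectors, just as \(18\) was produced in Lemma~\ref{lem:4.1}. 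Finding these explicit squares and unitaries and verifying the intersection counts is the genuinely computational step, and I expect it to be the hardest part.

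\textbf{Three squares and induction.} For three squares, I would form the sumset of the two-square set with \(\{0,5,7,12,21,24,25\}\), using a repeated square or a disjoint one via Lemma~\ref{lem:2.6}. I would patch any remaining value near \(75\) with two unitary images \(U_tB_0\) and \(U_{t'}B_0\), as was done for \(42\). This gives \([5,75]\setminus\{6\}\).

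Then I would induct on \(u\ge3\). Assuming \([5,25u]\setminus\{6\}\) is attainable with \(u\) squares, it stays attainable with \(u+1\) squares by repeating one of them. Adjoining a disjoint square of cardinality \(25\) gives \([30,25u+25]\setminus\{31\}\). The value \(31\) is already in the first range because \(25u\ge75\), so the union of the two ranges is \([5,25(u+1)]\setminus\{6\}\).
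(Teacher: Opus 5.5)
Your outline matches the paper's: intersections of size $0,1,2$ from Corollaries~\ref{cor:2.7}, \ref{cor:2.11}, \ref{cor:2.12}, the increment set $\{0\}\cup[3,7]\cup[10,12]\cup[19,25]$, sumsets, then induction. Your caveat is correct. The excluded value must be $6$; the paper's own proof ends with $[5,75]\setminus\{6\}$, so the $\{5\}$ in the statement is a typo. Your argument that a union of size $6$ is impossible is sound. Your induction step is also fine, and more explicit than the paper's.

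The gap is that you never carry out the finite check on which everything rests. The step you single out as the main obstacle is both misdiagnosed and unnecessary.
\begin{itemize}
\item The two-square sumset is exactly $[5,50]\setminus\{6,20,38,39\}$. Most values you expect to be missing are attained: $16=5+11$, $18=7+11$, $37=12+25$, $40=21+19$, $41=21+20$, $42=21+21$. Meanwhile $20$ is missing and you did not list it.
\item Since the lemma only concerns $u\ge 3$, nothing has to be filled at the two-square level. Adjoining a third square via Lemma~\ref{lem:2.6} gives $20=13+7$, $38=31+7$ and $39=32+7$. Every $c\in[51,75]\setminus\{63,64\}$ is a two-square value plus $25$, and $63=42+21$, $64=40+24$.
\item Hence $\bigl([5,50]\setminus\{6,20,38,39\}\bigr)+\{0,5,7,12,21,24,25\}=[5,75]\setminus\{6\}$, with no explicit squares or unitaries. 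This arithmetic is the paper's whole proof.
\end{itemize}

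As written, your argument instead rests on unspecified squares and unitaries whose intersection counts are never verified. For $20$ your proposed device cannot work at all. A two-square union of size $20$ would require two squares of cardinality $12$ meeting in exactly $4$ vectors, so no square of cardinality $25$ can be involved.

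Replace the gap-filling paragraph by the computation above and the proof is complete.
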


\begin{proof}
It is known from \cite{Zhang4} that the possible cardinalities of \(\text{QLS}(5)\text{s}\) are \(\{5,7,12,21,24,25\}\).  By Corollaries~\ref{cor:2.7},   \ref{cor:2.11} and~\ref{cor:2.12},  for any \(c_0,c_1\in \{5,7,12,21,24,25\}\), there exist two \(\text{QLS}(5)\text{s}\) \(B_0,B_1\) with \(|B_0|=c_0\), \(|B_1|=c_1\), and  \(|B_0\cap B_1|\in\{0,1,2\}\).

Define \(d=|B_1|-|B_0\cap B_1|\). The possible values of \(d\) include \(\{c_1,c_1-1,c_1-2:c_1\in \{5,7,12,21,24,25\}\cup\{0\} \}\).  Then \( d \in \{0\}\cup [3,7]\cup [10,12] \cup[19,25] \). Hence the attainable cardinalities for two \(\text{QLS}(5)\text{s}\) include
\[
\{5,7,12,21,24,25\} +\bigl(\{0\}\cup [3,7]\cup [10,12] \cup[19,25] \bigr)=[5,50]\setminus\{6, 20,38,39\}.
\]

For three \(\text{QLS}(5)\text{s}\), by taking a repeated \(\text{QLS}(5)\), or by adjoining a disjoint \(\text{QLS}(5)\) with cardinality in \(\{5,7,12,21,24,25\}\) by Lemma~\ref{lem:2.6}, we obtain
\[
\bigl([5,50]\setminus\{6, 20,38,39\} \bigr) + \{0,5,7,12,21,24,25\}= [5,75]\setminus\{6\}.
\]
Hence every integer in \([5,75]\setminus\{6\}\) is attainable by three \(\text{QLS}(5)\text{s}\). Then the result follows for all \(u\ge 3\) by induction.
\end{proof}

\begin{lemma}\label{lem:4.3}
Let \(Y\subseteq\mathcal H_4\) be a finite set, and  \(Y^{+}\subseteq\mathcal H_{5}\). For \(u\in\{1,2,3,7\}\), there exist \(u\) \(\text{IQLS}(5,1)\text{s}\) \(C_0,\ldots,C_{u-1}\) such that \(|4\rangle\in C_r\) for all \(r=0,\ldots,u-1\), and \(\left(\bigcup_{r=0}^{u-1} C_r\right)\cap Y^{+}=\emptyset\). Moreover, the attainable cardinalities of \(\left|\bigcup_{r=0}^{u-1} C_r\right|\) are listed in Table~\ref{tab:1}.
\end{lemma}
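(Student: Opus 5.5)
The plan is to first build a small library of single \(\text{IQLS}(5,1)\)s that contain \(|4\rangle\), and then combine several of them using the perturbation machinery of Section~2.

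\textbf{Step 1: base examples.} An \(\text{IQLS}(5,1)\) is a \(5\times5\) array with the lower-right cell removed. Its first four rows and columns are orthonormal bases of \(\mathcal H_5\), and its last row and column are orthonormal bases of \(\mathcal H_4\) embedded via \(Y\mapsto Y^{+}\). Since \(|4\rangle\) must occur exactly once in each of the first four rows, it fills a "transversal" of the upper \(4\times4\) block, so \(|4\rangle\in C_r\) always holds after permuting rows. I will take a classical \(\text{LS}(5)\) with an idempotent-like structure and delete the cell holding \(|4\rangle\) in the corner. I then replace subarrays supported on \(\{|0\rangle,\dots,|3\rangle\}\) by genuinely quantum pieces, much as in Examples~\ref{ex:1} and~\ref{ex:2}: for instance \(2\times2\) intercalates become \(|\pm\rangle\), and \(3\times3\) subsquares become Fourier vectors. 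This produces a list of attainable single cardinalities, which is the \(u=1\) row of Table~\ref{tab:1}. Every modification acts only on coordinates \(0,\dots,3\) and fixes \(|4\rangle\).

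\textbf{Step 2: avoiding \(Y^{+}\) and controlling overlaps.} I will mimic Lemma~\ref{lem:2.9} with \(n=4\), \(h=1\) and \(U=\operatorname{diag}(V,1)\), where \(V\) is built from \(u_4(x_0)\). Such a \(U\) preserves the \(\text{IQLS}(5,1)\) conditions, because the last row and column stay inside \(\mathcal H_4^{+}\). It also fixes \(|4\rangle\) and preserves the cardinality by Lemma~\ref{lem:2.20}. Lemma~\ref{lem:2.3} (valid since \(n=4\ge3\)) excludes only finitely many values of \(x_0\). Choosing \(x_0\) outside that finite set, every entry other than \(|4\rangle\) avoids \(Y^{+}\) and also avoids any prescribed finite set, such as the other \(C_{r'}\). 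The entry \(|4\rangle\) itself is never in \(Y^{+}\), since \(Y^{+}\) has last coordinate \(0\).

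Iterating this exactly as in Lemma~\ref{lem:2.10} gives \(u\) arrays with pairwise intersection exactly \(\{|4\rangle\}\). The union then has cardinality \(\sum_r (c_r-1)+1\). Alternatively, I can take repeated copies, which contributes \(0\) new elements, or use specific explicit unitaries as in Lemma~\ref{lem:4.1} to realise intermediate overlaps. The resulting sumsets give the rows of Table~\ref{tab:1} for \(u=2,3,7\).

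\textbf{Main obstacle.} I expect the hardest part to be Step~1 together with filling the gaps in the sumsets. I need enough base cardinalities, and enough explicit overlapping pairs, to cover every value claimed in the table. As in Lemma~\ref{lem:4.1}, I would first compute the sumset of disjoint-plus-\(|4\rangle\) unions, identify the missing values, and then handle them one at a time. For each missing value I would try diagonal or permutation unitaries \(U=\operatorname{diag}(V,1)\) applied to a maximal-cardinality example, for example sign flips on coordinates or a \(3\)-\(4\)-\(5\) rotation. I would count \(|C\cap UC|\) directly from the entries, exactly as for \(U_{25},\dots,U_{30}\). Here the avoidance of \(Y^{+}\) must be rechecked, since these unitaries are not generic. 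For this I would apply one final generic \(\operatorname{diag}(V,1)\) to all the \(C_r\) simultaneously. This preserves all mutual intersections, because a unitary is injective, and it also moves the whole family off \(Y^{+}\).
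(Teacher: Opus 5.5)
\textbf{Step 2 is fine.} It is essentially the paper's mechanism. Corollary~\ref{cor:2.11} (that is, Lemmas~\ref{lem:2.9} and~\ref{lem:2.10} with $h=1$ and $U=\operatorname{diag}(V,1)$) gives members that pairwise meet only in $\{|4\rangle\}$. You avoid $Y^{+}$ by a generic choice of $x_0$; the paper instead discards the finitely many members that meet $Y^{+}$. Either works. The explicit-unitary gap filling you worry about is unnecessary. Each set in Table~\ref{tab:1} consists exactly of the numbers $1+\sum_{r}(c_r-1)$, where the sum runs over at most $u$ such members with $c_r\in S_1$ and the remaining members are repetitions.

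\textbf{The genuine gap is Step 1.} It is supposed to produce $S_1=\{5,7,12,21,24\}$, and it cannot. Your set-up starts from a classical $\text{LS}(5)$ with $4$ in the corner and modifies only coordinates $0,\dots,3$, so $|4\rangle$ is fixed. The four cells of the upper $4\times4$ block carrying symbol $4$ therefore remain $|4\rangle$. Only $20$ other cells are left, so $|C_r|\le 21$. The value $24$ is out of reach, and with it values such as $47=1+23+23\in S_2$. Nothing in the scheme is shown to give $12$ or $21$ either. A $|\pm\rangle$ substitution into an intercalate adds only two new vectors, and an $\text{LS}(5)$ has no $3\times3$ subsquares to replace by Fourier vectors. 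Since every later row of the table is built from $S_1$, the table is unsupported.

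\textbf{The missing idea.} Take the known $\text{QLS}(5)$s of cardinalities $5,7,12,21,24$ from \cite{Zhang4} instead of trying to build them.
\begin{itemize}
\item Since $c<25$, each of these squares has a repeated vector.
\item A unitary sends that vector to $|4\rangle$, and row and column permutations put one occurrence in cell $(4,4)$.
\item Orthogonality then forces the other entries of the last row and column into $\operatorname{span}\{|0\rangle,\dots,|3\rangle\}$.
\item Deleting the corner cell gives an $\text{IQLS}(5,1)$ with the same cardinality that still contains $|4\rangle$.
\end{itemize}
This is how the paper obtains $S_1$.

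\textbf{A false claim.} You say $|4\rangle$ must occur in each of the first four rows of every $\text{IQLS}(5,1)$. It need not. The other four entries of such a row need only span the orthogonal complement of its last-column entry. That complement contains $|4\rangle$ without $|4\rangle$ being one of the entries. Lemma~\ref{lem:4.4} treats exactly the $\text{IQLS}(5,1)$s with $|4\rangle\notin C_r$. So $|4\rangle\in C_r$ is a condition that has to be arranged, and the repeated-element deletion above is what arranges it.
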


\begin{proof}
For \(c\in \{5,7,12,21,24\}\), elements in a \(\text{QLS}(5)\) of cardinality \(c\) may repeat. Without loss of generality, assume that \(|4\rangle\) is a repeated element. By Corollary~\ref{cor:2.11}, there exist sufficiently many \(\text{QLS}(5)\text{s}\) with cardinalities in \(\{5,7,12,21,24\}\), each containing \(|4\rangle\), and any two of them intersect exactly in \(\{|4\rangle\}\). Since \(Y^{+}\) is finite and \(|4\rangle\notin Y^{+}\), each element of \(Y^{+}\) appears in at most one such \(\text{QLS}(5)\). Hence only finitely many of these \(\text{QLS}(5)\text{s}\) contain elements of \(Y^{+}\). Therefore, by selecting sufficiently many such \(\text{QLS}(5)\text{s}\), we may choose \(u\) of them whose entries avoid \(Y^{+}\).

Since \(|4\rangle\) is a repeated element, removing \(|4\rangle\) from each chosen \(\text{QLS}(5)\) does not change its cardinality. We thus obtain \(u\) \(\text{IQLS}(5,1)\text{s}\) \(C_0,\ldots,C_{u-1}\) satisfying \(C_r\cap C_{r'}=\{|4\rangle\}\) for \(r\neq r'\), and \(\left(\bigcup_{r=0}^{u-1} C_r\right)\cap Y^{+}=\emptyset\). A disjoint \(\text{IQLS}(5,1)\) of cardinality \(c\in\{5,7,12,21,24\}\) containing \(|4\rangle\) contributes \(c-1\) elements, while a repeated one contributes \(0\). Hence, the possible increments are given by \(\{0,4,6,11,20,23\}\). Starting from \(\{5,7,12,21,24\}\), the attainable cardinalities obtained by additions are listed in Table~\ref{tab:1}.
\end{proof}

\begin{table}[H]
\centering
\small

\setlength{\tabcolsep}{9pt}
\caption{Attainable cardinalities of \(\left|\bigcup_{r=0}^{u-1} C_r\right|\),    \(|4\rangle\in C_r\) }
\label{tab:1}
\begin{tabular}{c p{12.5cm}}
\toprule
\(u\) & Attainable cardinalities \\
\midrule
1 &
\(S_1=\{5,7,12,21,24\}\)
\\[2mm]
2 &
\(S_2=\{5,7,9,11,12,13,16,18,21,23,24,25,27,28,30,32,35,41,44,47\}\)
\\[2mm]
3 &
\(S_3=\left[5,70\right] \setminus \{6,8,10,14,26,37,40,42,49,54,56,57,59,60,62,63,65,66,68,69\}\)
\\[2mm]
7 &
\(S_7=\left[5,162\right] \setminus  \{6,8,10,14,146,148,149,151,152,154,155,157,158,160,161\}\)
\\
\bottomrule
\end{tabular}
\end{table}

\begin{lemma}\label{lem:4.4}
Let \(Y\subseteq\mathcal H_4\) be a finite set, and  \(Y^{+}\subseteq\mathcal H_{5}\). For \(u\in\{1,2,3,7\}\), there exist \(u\) \(\text{IQLS}(5,1)\text{s}\) \(C_0,\ldots,C_{u-1}\) such that \(|4\rangle\notin C_r\) for all \(r=0,\ldots,u-1\), and \(\left(\bigcup_{r=0}^{u-1} C_r\right)\cap Y^{+}=\emptyset\). Moreover, the attainable cardinalities of \(\left|\bigcup_{r=0}^{u-1} C_r\right|\) are listed in Table~\ref{tab:2}.
\end{lemma}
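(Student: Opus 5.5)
The plan mirrors the proof of Lemma~\ref{lem:4.3}, with the difference that now the cell of the removed hole must not contain \(|4\rangle\). We begin with a \(\text{QLS}(5)\) \(A\) of cardinality \(c\in\{5,7,12,21,24,25\}\). We assume, after a change of basis if needed, that the lower-right cell of \(A\) equals \(|4\rangle\) and that \(|4\rangle\) appears in \(A\) only once; this is automatic when \(c=25\). Deleting that cell produces an \(\text{IQLS}(5,1)\). By orthonormality, row \(4\) and column \(4\) then lie in \(\operatorname{span}\{|0\rangle,\dots,|3\rangle\}\), and no remaining entry equals \(|4\rangle\). The resulting cardinality is \(c-1\), which gives the base values \(\{4,6,11,20,23,24\}\). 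For \(c=25\) every element is distinct, so this step is clean. For the smaller values, one needs a \(\text{QLS}(5)\) in which some element occurs exactly once. If every element of such a square repeats, that value of \(c\) simply drops out of the base set. For instance, a classical \(\text{QLS}(5)\) has no singleton element, so the value \(5-1=4\) should be omitted unless we use a different design; the table will record exactly which bases occur.

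Next we obtain many such incomplete squares, pairwise disjoint and avoiding \(Y^{+}\). To do this we apply Lemma~\ref{lem:2.9} with \(n=4\) and \(h=1\). The unitary there has the form \(U=\mathrm{diag}(V,1)\), so it fixes \(|4\rangle\) and preserves the structure of the hole: row and column \(4\) stay in the span of \(|0\rangle,\dots,|3\rangle\). Lemma~\ref{lem:2.10} and the pattern of Corollary~\ref{cor:2.11} then yield, for the full squares, arbitrarily many copies \(A_1,A_2,\dots\) whose pairwise intersections are \(\{|4\rangle\}\). After the cell \(|4\rangle\) is deleted, the resulting \(C_r\) are pairwise disjoint. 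Next we handle \(Y^{+}\). No element of \(Y^{+}\) equals \(|4\rangle\), and any other vector belongs to at most one of the disjoint \(C_r\). Hence only finitely many copies meet \(Y^{+}\), and we discard those. Alternatively, we can include \(Y^{+}\) in the avoided set directly when invoking Lemma~\ref{lem:2.9}.

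Finally we count. Taking \(u\) incomplete squares, where copies may repeat or be pairwise disjoint, gives a union of cardinality \(s_0+s_1+\dots\). Here \(s_0\) is a base value and each later term is either \(0\) (a repeated copy) or a fresh base value (a disjoint copy). The sets for \(u=1,2,3,7\) are then iterated sumsets of the base set \(\{4,6,11,20,23,24\}\) (adjusted as above), each with \(0\) adjoined. Table~\ref{tab:2} is produced by evaluating these sums. The main obstacle is the first step. We must check, for each \(c\in\{5,7,12,21,24\}\), whether a \(\text{QLS}(5)\) of that cardinality has an element occurring exactly once. For this I would use the explicit squares from \cite{Zhang4}: find a singleton entry in each, then move it to the lower-right cell by row and column permutations and map it to \(|4\rangle\) by a unitary, via Lemma~\ref{lem:2.20}.
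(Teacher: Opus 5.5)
\textbf{Verdict: correct approach, same as the paper's, but with one unfinished step.} Your construction is the paper's: put a singleton entry in the corner, map it to \(|4\rangle\), and delete it to get an \(\text{IQLS}(5,1)\) of cardinality \(c-1\). Then produce arbitrarily many copies that meet pairwise only in \(|4\rangle\) and avoid \(Y^{+}\), and add increments from \(\{0\}\) together with the base values. Your direct iteration of Lemma~\ref{lem:2.9} is slightly cleaner than the paper's appeal to Corollary~\ref{cor:2.11}. You use \(U=\mathrm{diag}(V,1)\), with \(Y^{+}\cup\{|4\rangle\}\) and the earlier squares in the avoided set. Corollary~\ref{cor:2.11} as stated handles only two base cardinalities, while mixing all three of \(21,24,25\) is needed, e.g.\ for \(67=20+23+24\).

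\textbf{The gap.} It is the step you yourself call the main obstacle, which you leave to an inspection of the explicit squares in \cite{Zhang4}. As written, your argument does not establish that \(20\) and \(23\) are base values. Every entry of Table~\ref{tab:2} that is not a multiple of \(24\) depends on them.

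\textbf{How to close it.} A count settles this with no inspection. If every distinct entry of a \(\text{QLS}(5)\) occurred at least twice, then \(2c\le 25\), so \(c\le 12\). Hence for every \(c\in\{21,24,25\}\), any \(\text{QLS}(5)\) of cardinality \(c\) has a singleton; indeed it has at least \(2c-25\) of them. This is exactly how the paper obtains \(T_1=\{20,23,24\}\).

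\textbf{The other cases.} The values \(c\in\{5,7,12\}\) play no role in Table~\ref{tab:2} and should be dropped. Suppose some \(\text{QLS}(5)\) of cardinality \(7\) or \(12\) did have a singleton. Then your sumsets would contain \(6\) or \(11\) and strictly exceed the table. That does not hurt attainability, but it contradicts your claim that the table is what these sums produce. Saying ``the table will record exactly which bases occur'' is circular: the base set must be fixed, and justified, before the table is computed.
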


\begin{proof}
For \(c \in \{21,24,25\}\), every \(\text{QLS}(5)\) of cardinality \(c\) contains an element appearing exactly once. Without loss of generality,    assume that \(|4\rangle\) occurs exactly once. By   Corollary~\ref{cor:2.11}, there exist sufficiently many \(\text{QLS}(5)\text{s}\)  with cardinalities in \(\{ 21,24,25\}\), each containing \(|4\rangle\), and any two of them intersect exactly in \(\{|4\rangle\}\).  Hence, we may select \(u\) \(\text{QLS}(5)\text{s}\) whose entries avoid   elements of \(Y^+\).

Since \(|4\rangle\) occurs exactly once, removing \(|4\rangle\)  reduces the cardinality by one. Hence, we obtain \(u\) \(\text{IQLS}(5,1)\text{s}\)  \(C_0,\ldots,C_{u-1}\) with cardinalities in \(\{20,23,24\}\), satisfying \(|4\rangle \notin C_r\),  \(C_r \cap C_{r'}=\emptyset\) for \(r\neq r'\), and \(\left(\bigcup_{r=0}^{u-1} C_r\right)\cap Y^{+}=\emptyset\).   A disjoint \(\text{IQLS}(5,1)\) of cardinality \(c\in\{20,23,24\}\) contributes \(c\) elements, while a repeated one contributes \(0\). Hence, the possible increments are \(\{0,20,23,24\}\). Starting from \(\{20,23,24\}\), the attainable cardinalities obtained by additions are listed in Table~\ref{tab:2}.
\end{proof}

\begin{table}[H]
\centering
\small

\setlength{\tabcolsep}{9pt}
\caption{Attainable cardinalities of \(\left|\bigcup_{r=0}^{u-1} C_r\right|\),   \(|4\rangle\notin C_r\) }
\label{tab:2}
\begin{tabular}{c p{12.2cm}}
\toprule
\(u\) & Attainable cardinalities \\
\midrule
1 &
\(T_1=\{20,23,24\}\)
\\[2mm]
2 &
\(T_2=\{20, 23, 24, 40, 43, 44, 46, 47, 48\}\)
\\[2mm]
3 &
\( T_3=\{20, 23, 24, 40, 43, 44, 46, 47, 48, 60, 63, 64, 66, 67, 68, 69, 70, 71, 72\}\)
\\[2mm]

7 &
\(T_7=\begin{aligned}
&\{20,23,24,40,43,44,46,47,48,60,63,64,  80,83,84,   100,103,104,   123,124\}
  \\
& \cup [66,72] \cup[86,96]
 \cup[106,120]  \cup[126,144]\cup[146,168].
\end{aligned}\)
\\
\bottomrule
\end{tabular}
\end{table}

\begin{lemma}\label{lem:4.21}
Let \(m\ge 3\). Let \(Y\subseteq \mathcal H_m\) be a finite set, and   \(Y^{+}\subseteq \mathcal H_{m+1}\). For any positive integer  \(u\) and any  integer \(c\in \{u_0((m+1)^2-1):  u_0=1,\ldots,u  \} \), there exist \(u\) \(\text{IQLS}(m+1,1)\text{s}\) \(C_0,\ldots,C_{u-1}\) such that \(|m\rangle\notin C_r\)  for  \(r=0,\ldots,u-1\), \(\left(\bigcup_{r=0}^{u-1}C_r\right)\cap Y^{+}=\emptyset\), and \(\left|\bigcup_{r=0}^{u-1}C_r\right|=c\).
\end{lemma}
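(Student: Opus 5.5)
We follow the template of Lemma~\ref{lem:4.4}, using a \(\text{QLS}(m+1)\) of maximal cardinality \((m+1)^2\) (which exists for \(m+1\ge 4\) by \cite{Zang6}). In such a square every entry is distinct, so each vector occurs exactly once. After a change of basis (Lemma~\ref{lem:2.20}, moving one entry to \(|m\rangle\)) and a row and column permutation, we may assume \(|m\rangle\) occupies the lower-right cell. Deleting this cell leaves an \(\text{IQLS}(m+1,1)\). To check this, note that the other entries of the last row and last column are orthogonal to \(|m\rangle\), so they lie in \(\operatorname{span}\{|0\rangle,\ldots,|m-1\rangle\}\), as condition (iii) requires. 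Since \(|m\rangle\) appeared only once, the resulting \(\text{IQLS}(m+1,1)\) has cardinality \((m+1)^2-1\) and does not contain \(|m\rangle\).

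Next we produce many such squares that pairwise meet only in \(|m\rangle\). Apply Corollary~\ref{cor:2.11} with \(n=m\ge 3\) and \(A=B\) equal to the maximal-cardinality \(\text{QLS}(m+1)\), with \(k\) as large as needed. This gives arbitrarily many \(\text{QLS}(m+1)\)s \(A_1,A_2,\ldots\), each of cardinality \((m+1)^2\), each containing \(|m\rangle\), and with \(A_s\cap A_t=\{|m\rangle\}\) for \(s\ne t\). Because each \(A_s\) has all entries distinct, \(|m\rangle\) sits in a single cell, which we can move to the lower-right corner by permuting rows and columns. Removing it yields \(\text{IQLS}(m+1,1)\)s \(C'_s\), each of cardinality \((m+1)^2-1\), which are pairwise disjoint and do not contain \(|m\rangle\).

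To avoid \(Y^{+}\), observe that \(|m\rangle\notin Y^{+}\), since every vector of \(Y^{+}\) has last coordinate \(0\). Each element of the finite set \(Y^{+}\) therefore lies in at most one \(C'_s\). Taking \(k>|Y^{+}|+u\), at least \(u\) of the \(C'_s\) avoid \(Y^{+}\) entirely; relabel these \(D_0,\ldots,D_{u-1}\). For a target \(c=u_0((m+1)^2-1)\) with \(1\le u_0\le u\), set \(C_r=D_r\) for \(r<u_0\) and \(C_r=D_0\) for \(u_0\le r\le u-1\). The repeats add nothing new, so \(\bigl|\bigcup_r C_r\bigr|=u_0((m+1)^2-1)\), and all the required conditions hold.

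The main point needing care is the first step: we must confirm that a \(\text{QLS}(m+1)\) of cardinality \((m+1)^2\) exists for all \(m\ge 3\), which is exactly the result of \cite{Zang6} cited in the introduction. We must also check that the changes of basis in Corollary~\ref{cor:2.11} keep \(|m\rangle\) as a single entry, which holds because cardinality is preserved and all entries remain distinct. The rest is bookkeeping.
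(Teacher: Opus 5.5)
Your proposal is correct and follows the paper's proof essentially step for step. Both arguments take a maximal-cardinality \(\text{QLS}(m+1)\) and use Corollary~\ref{cor:2.11} to get many copies that pairwise meet only in \(|m\rangle\). Both then use that \(Y^{+}\) is finite with \(|m\rangle\notin Y^{+}\) to select \(u\) copies avoiding it, delete the unique \(|m\rangle\), and pad with repeats to reach \(u_0((m+1)^2-1)\). You also spell out two points the paper leaves implicit: permuting rows and columns so that \(|m\rangle\) sits in the hole, and checking condition (iii) via orthogonality to \(|m\rangle\).
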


\begin{proof}
Since \(m\ge 3\), there exists a \(\text{QLS}(m+1)\) of maximal cardinality \((m+1)^2\). By Corollary~\ref{cor:2.11}, there exist sufficiently many  \(\text{QLS}(m+1)\text{s}\)  with cardinality \((m+1)^2\), each containing \(|m\rangle\), and any two of them intersect exactly in \(\{|m\rangle\}\). Since \(Y^{+}\) is finite and \(|m\rangle\notin Y^{+}\),  we may choose \(u\) \(\text{QLS}(m+1)\text{s}\) whose entries avoid  \(Y^+\).

For each chosen \(\text{QLS}(m+1)\), deleting the unique occurrence of \(|m\rangle\) yields an \(\text{IQLS}(m+1,1)\) of cardinality \((m+1)^2-1\). Moreover, since any two of the original \(\text{QLS}(m+1)\text{s}\) intersect only in \(|m\rangle\), the resulting \(\text{IQLS}(m+1,1)\text{s}\) are pairwise disjoint and avoid \(Y^{+}\).

Let \(c=u_0((m+1)^2-1)\), \(u_0=1,\ldots,u\). Choose \(u_0\) pairwise disjoint \(\text{IQLS}(m+1,1)\text{s}\) as above, and let the remaining \(u-u_0\) members be repetitions of one of them. Then the union of the resulting \(u\) \(\text{IQLS}(m+1,1)\text{s}\) has cardinality \(c\).
\end{proof}

\begin{lemma}\label{lem:4.5}
For  $u \ge 3$ and every integer \(c \in [4u+1,\,(4u+1)^{2}] \setminus \{4u+2\}\), there exists a $\text{QLS}(4u+1)$ with cardinality $c$.
\end{lemma}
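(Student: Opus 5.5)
We apply the singular direct product construction (Construction~\ref{con:1}) with $m=4$ and $h=1$, so that $mn+h=4u+1$ with $n=u$. For $u\ge 3$ with $u\neq 6$ there is a classical $\text{QLS}(u)$ with $u$ pairwise disjoint transversals. The case $u=6$, and possibly small $u$ where the arithmetic is tight, would be handled by instead taking $n=u/3$ or $n=u/7$. That is, we use blocks of $\text{QLS}(4k)$ built by Lemma~\ref{lem:4.2} together with $\text{IQLS}(4k+1,1)$s. The IQLS tables are listed for $u\in\{1,2,3,7\}$, which suggests exactly such a choice of $h$ transversals. With $h=1$, the ingredients are as follows:
\begin{itemize}
\item $u-1$ copies $B_0,\dots,B_{u-2}$ of $\text{QLS}(4)$s whose union has cardinality $c_0\in[4,16(u-1)]\setminus\{5\}$, given by Lemma~\ref{lem:4.1} (Lemma~\ref{lem:4.1} needs at least three $\text{QLS}(4)$s, so $u-1\ge 3$; otherwise use the two-square list $[4,32]\setminus\{5,17,26\}$);
\item one $\text{IQLS}(5,1)$ $C_0$, taken from Lemma~\ref{lem:4.3} or Lemma~\ref{lem:4.4} with $Y=\bigcup B_q$, so that $c_2=0$;
\item $D$, the trivial $\text{QLS}(1)$, so $c_3=1$.
\end{itemize}

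By Lemma~\ref{lem:3.5}, if $|4\rangle\in C_0$ the cardinality is $u(c_0+c_1-1)+1$ with $c_1\in S_1=\{5,7,12,21,24\}$. By Lemma~\ref{lem:3.6}, if $|4\rangle\notin C_0$ it is $u(c_0+c_1)+1$ with $c_1\in T_1=\{20,23,24\}$.

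This only gives values congruent to $1$ modulo $u$, which is far too coarse. So the real plan is to take $h$ larger, using $h\in\{1,2,3,7\}$ hole rows. Let me restate it: take $n$ with $4n+h=4u+1$. The more natural route is to use a classical $\text{QLS}(n)$ but let the $B_q$ vary per transversal. The cardinality formula in Lemma~\ref{lem:3.5} involves the union over all $q$ and is multiplied by $n$, so fine granularity must come from elsewhere. I would instead use the freedom that distinct symbols $e$ can use different families. Concretely, I would generalize Lemmas~\ref{lem:3.5}/\ref{lem:3.6} so that the $n$ symbol classes $X_e$ are built from independent choices of $(B_q^{(e)},C_r^{(e)})$. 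This is still a valid QLS because orthogonality in Construction~\ref{con:1} only uses orthogonality of the tensor factors within each block row and column. The total cardinality then becomes a sum $\sum_{e}(c_0^{(e)}+c_1^{(e)}-1)+1$ over $u$ terms. Each summand ranges over $([4,16(u-1)]\setminus\{5\})+S_1-1$, which contains all of $[8,16u+7]$ apart from a few small gaps. Alternatively it ranges over $([4,16(u-1)]\setminus\{5\})+T_1$.

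Summing $u$ such sets gives every integer from $8u+1$ up to roughly $u(16(u-1)+24)+1=16u^2+8u+1$, which reaches $(4u+1)^2=16u^2+8u+1$ exactly via the all-maximal choice: $c_0=16(u-1)$ with $T_1$ value $24$. For the low range $[4u+1,8u]\setminus\{4u+2\}$, Construction~\ref{con:1} cannot produce values below $4u+\dots$, so I would handle it by modifying a classical $\text{QLS}(4u+1)$ directly. Replacing a $2\times 2$ intercalate subsquare on symbols $\{a,b\}$ by $|\pm\rangle$ adds $2$ each time, and a $3\times3$ subsquare by a Fourier block adds $3$. A classical $\text{LS}(4u+1)$ with many disjoint intercalates and order-3 subsquares (for example from a product-type construction) then yields every value in $[4u+1,8u]$ except $4u+2$.

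The main obstacle is the verification that the per-symbol independent choices still give pairwise distinct vectors across classes: the $H$-vectors $(\mathbf 0,|r\rangle)$ are shared, and everything else is disjoint because of the $|e\rangle\otimes$ factor. A second obstacle is the careful sumset bookkeeping near the top end $(4u+1)^2$, where the gaps of $S_1,T_1$ near $24$ must be filled by mixing the $S$-type and $T$-type choices across different symbols $e$.
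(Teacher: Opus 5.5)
Your observation that Lemmas~\ref{lem:3.5} and~\ref{lem:3.6} as stated only give $|M|\equiv c_3 \pmod{n}$ is correct. With $c_1=5$, $c_2=4$, $c_3=1$ they give literally $uc_0+1$. The intervals asserted in the paper's proof are $u$-fold sumsets, so they tacitly rely on exactly your per-symbol version of Construction~\ref{con:1}, and your justification of that version is sound. The proposal nevertheless has genuine gaps.

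First, $u=6$ ($v=25$) is not covered. No $\text{LS}(6)$ has six disjoint transversals, and your fallback does not work: $n=u/3=2$ needs a classical $\text{QLS}(2)$ with two disjoint transversals (an $\text{LS}(2)$ has none), and $u/7$ is not an integer. The paper treats $25=5\cdot 5$ separately via Construction~\ref{con:2} with five $\text{QLS}(5)$s from Lemma~\ref{lem:4.20}, whose union cardinalities cover $[5,125]\setminus\{6\}$; summing over the five symbols gives $[25,625]\setminus\{26\}$.

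Second, with $c_2=0$ your per-symbol values are $([4,16(u-1)]\setminus\{5\})+(\{4,6,11,20,23\}\cup\{20,23,24\})=[8,16u+8]\setminus\{9\}$. The $u$-fold sumset of this set misses $8u+1$. Hence $c=8u+2$ is never produced, and your low-range argument stops at $8u$.

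Third, the low range rests on an unproved existence claim. Intercalate replacements add only even amounts, so you need a classical $\text{LS}(4u+1)$ with an order-$3$ subsquare plus about $2u$ cell-disjoint intercalates avoiding it, and a product-type construction is unavailable when $4u+1$ is prime.

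All of this is unnecessary, and your remark that Construction~\ref{con:1} cannot go below roughly $8u$ is wrong. As in the paper's Case~1, let $C^{(e)}_0$ be a $5\times 5$ Latin square on the zero-padded vectors of one row of some $B^{(e)}_q$ together with $|4\rangle$, arranged so that the lower-right cell is $|4\rangle$, and delete that cell. Then $c_1=5$, $c_2=4$, and symbol class $e$ contributes exactly $c_0^{(e)}\in[4,16(u-1)]\setminus\{5\}$ new vectors. Summing over the $u$ symbols and adding $c_3=1$ gives $[4u+1,16u(u-1)+1]\setminus\{4u+2\}$, which contains both your low range and $8u+2$.

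For the top range, take $|C^{(e)}_0|=24$, $|4\rangle\notin C^{(e)}_0$, $c_2=0$ (Lemma~\ref{lem:4.4}) for every $e$; this gives $[28u+1,(4u+1)^2]\setminus\{28u+2\}$. The two ranges overlap for $u\ge 3$ (using the two-square list of Lemma~\ref{lem:4.1} when $u=3$), so no mixing of $S_1$- and $T_1$-type choices and no intercalate surgery is needed.
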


\begin{proof}
For \(u\ge 4\) and \(u\ne 6\), there exists a \(\text{QLS}(u)\) \(A\) containing \(u\) pairwise disjoint transversals. Moreover, by Lemma~\ref{lem:4.1}, for any \(c_0\in [4,16(u-1)]\setminus\{5\}\), there exist \(u-1\) \(\text{QLS}(4)\text{s}\) \(B_0,\ldots,B_{u-2}\) such that \(\left|\bigcup_{q=0}^{u-2}B_q\right|=c_0\). Let \(D\) be a classical \(\text{QLS}(1)\). Applying Construction~\ref{con:1} with \(n=u\), \(m=4\), and \(h=1\), we obtain a \(\text{QLS}(4u+1)\), and it remains to determine the required \(\text{IQLS}(5,1)\) \(C_0\).

\textbf{Case 1.}
Let \(C_0\) be obtained by extending a basis contained in some \(B_q\), together with \(|4\rangle\). Thus Lemma~\ref{lem:3.5} applies with \(c_1=5\), \(c_2=4\), and \(c_3=1\), and for every integer \(c\in [4u+1,\,16u(u-1)+1]\setminus\{4u+2\}\), there exists a \(\text{QLS}(4u+1)\) with cardinality \(c\).

\textbf{Case 2.}
By Lemma~\ref{lem:4.4}, we may choose \(C_0\) such that \(|C_0|=24\), \(|4\rangle\notin C_0\), and \(C_0\cap \left(\bigcup_{q=0}^{u-2}B_q^+\right)=\emptyset\). Thus Lemma~\ref{lem:3.6} applies with \(c_1=24\), \(c_2=0\), and \(c_3=1\), and we obtain \(\text{QLS}(4u+1)\text{s}\) with cardinalities \(c\in [28u+1,\,(4u+1)^2]\setminus\{28u+2\}\).

Since \(28u + 2 \le 16u(u-1) + 1\) for \(u \ge 4\), every integer \(c \in [4u+1,(4u+1)^2]\setminus\{4u+2\}\) is attainable. For \(u\in\{3,6\}\), Table~\ref{tab:4u+1} shows that all cardinalities are attainable.
\end{proof}

\begin{table}[htbp]
\centering
{\footnotesize
\setlength{\tabcolsep}{5pt}
\caption{Cardinalities of $\text{QLS}(4u+1)$s for $u\in \{3,6\}$}
\label{tab:4u+1}
\begin{tabular}{
c c
@{\hspace{5pt}}
p{3.5cm}
@{\hspace{3pt}}
p{5.5cm}
@{\hspace{10pt}}   
c
}

\toprule
$u$ & $4u+1$ & Constructions & Input designs & Cardinalities \\
\midrule

\multirow{2}{*}{$3$} & \multirow{2}{*}{$13$}
&
\begin{tabular}[t]{@{}l@{}}
Lemma~\ref{lem:3.5},\\
$n=3,m=4,h=1$
\end{tabular}
&
\begin{tabular}[t]{@{}l@{}}
 Lemma~\ref{lem:4.1}, $c_0\in [4,32]\setminus\{5,17,26\}$;\\
$c_1=5,\, c_2=4,\, c_3=1$.
\end{tabular}
&
$[13,97]\setminus\{14\}$ \\

\cmidrule(lr){3-5}

& &
\begin{tabular}[t]{@{}l@{}}
Lemma~\ref{lem:3.6},\\
$n=3,m=4,h=1$
\end{tabular}
&
\begin{tabular}[t]{@{}l@{}}
 Lemma~\ref{lem:4.1},  $c_0\in [4,32]\setminus\{5,17,26\}$;\\
 Lemma~\ref{lem:4.4}, $c_1=24,\, c_2=0;\, c_3=1$.
\end{tabular}
&
$[85,169]\setminus\{86\}$ \\
\midrule

\multirow{2}{*}{$6$} & \multirow{2}{*}{$25$}
&
\begin{tabular}[t]{@{}l@{}}
Construction~\ref{con:2}\\
$m=5,n=5$
\end{tabular}
&
\begin{tabular}[t]{@{}l@{}}
Lemma~\ref{lem:4.20}, five  \(\text{QLS}(5)\text{s}\)  with\\ $c\in [5,125]\setminus\{6\}$.
\end{tabular}
&
$[25,625]\setminus\{26\}$ \\

\bottomrule
\end{tabular}
} 
\end{table}

\begin{lemma}\label{lem:4.6}
For   $u \ge 2$ and  every integer \(c \in [4u+2,\,(4u+2)^{2}] \setminus \{ 4u+3 \}\), there exists a $\text{QLS}(4u+2)$ with cardinality $c$.
\end{lemma}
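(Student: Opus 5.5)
The plan is to use the singular direct product (Construction~\ref{con:1}) with $n=u$, $m=4$ and $h=2$, so that $mn+h=4u+2$. For $u\ge 3$ with $u\neq 6$ there is a classical $\text{QLS}(u)$ with $u$ pairwise disjoint transversals. As $D$ I take a classical $\text{QLS}(2)$, which contains $|0\rangle,|1\rangle$ and has $c_3=2$. The $u-2$ squares $B_0,\ldots,B_{u-3}$ are $\text{QLS}(4)$s supplied by Lemma~\ref{lem:4.1}; their union $c_0$ ranges over $[4,16(u-2)]\setminus\{5\}$ when $u-2\ge 3$, and over the sets listed there when $u-2\in\{1,2\}$. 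The two $\text{IQLS}(5,1)$s $C_0,C_1$ are the source of the flexibility, and I will split into three cases according to how they are chosen.

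\textbf{Case 1 (low range).} Take $C_0=C_1$, obtained by extending a basis contained in some $B_q$ by $|4\rangle$. Then Lemma~\ref{lem:3.5} applies with $c_1=5$, $c_2=4$ and $c_3=2$, giving $|M|=u\,c_0+2$.

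\textbf{Case 2 (middle range).} Use Lemma~\ref{lem:4.3} with $u=2$ (Table~\ref{tab:1}, the set $S_2$) to choose $C_0,C_1\ni|4\rangle$ avoiding $\bigcup_q B_q^{+}$. Then Lemma~\ref{lem:3.5} gives $|M|=u(c_0+c_1-1)+2$ with $c_1\in S_2$.

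\textbf{Case 3 (top range).} Use Lemma~\ref{lem:4.4} or Lemma~\ref{lem:4.21} (the set $T_2$) to choose $C_0,C_1\not\ni|4\rangle$ avoiding $\bigcup_q B_q^{+}$. Then Lemma~\ref{lem:3.6} gives $|M|=u(c_0+c_1)+2$ with $c_1\in T_2$. In particular $c_0=16(u-2)$ and $c_1=48$ reach $16u^2+16u+2$, short of the maximum $(4u+2)^2=16u^2+16u+4$. That maximum is supplied by the known existence of a maximal-cardinality $\text{QLS}(v)$ for every $v\ge4$ (\cite{Zang6}).

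Next I would check that these families, with $c_0$ and $c_1$ varying over their admissible sets, together with $u$ and the extra additive freedom, cover $[4u+2,(4u+2)^2]\setminus\{4u+3\}$. The intended overlap argument is the one used in Lemma~\ref{lem:4.5}: compare the top of the Case~1 range, about $16u(u-2)+2$, with the bottom of the Case~3 range, about $u(4+20)+2$, which lies well below it once $u$ is moderately large. The gaps of $S_2$ and $T_2$ should be absorbed because $c_0$ fills a long interval.

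The main obstacle is exactly this interval-filling step. As written, Lemmas~\ref{lem:3.5} and~\ref{lem:3.6} produce cardinalities of the form $u(\cdot)+2$, so a literal reading only reaches residues $\equiv 2 \pmod u$. To get every integer I would first try to refine the count in the lemmas: use different $C_r$-sets for different symbols $e$ (each $X_e$ is built independently), or choose the $B_q$ so that the unions attached to distinct symbols $|e\rangle$ differ. The first thing to try is replacing the single value $c_0$ by independent values $c_0^{(e)}$, giving $\sum_e c_0^{(e)}$. I expect this to follow from the same disjointness bookkeeping as in the proof of Lemma~\ref{lem:3.5}, but it must be rechecked carefully.

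The small orders are handled separately and recorded in a table like Table~\ref{tab:4u+1}. These are $u=2$ (order $10$) and $u=6$ (order $26$), together with any $u$ too small for Lemma~\ref{lem:4.1} to give an interval, chiefly $u=3,4$ (orders $14$ and $18$). When $3\mid 2u+1$, so that $4u+2=6u'$, I cite Theorem~\ref{th:1.1}(2) directly; this settles order $18$. For order $26$, I would use Construction~\ref{con:1} with $n=5$, $m=5$, $h=1$ (since $5\cdot5+1=26$), with inputs from Lemma~\ref{lem:4.20} and Tables~\ref{tab:1} and~\ref{tab:2}. For the remaining orders I would use explicit choices of $(c_0,c_1,c_2,c_3)$.
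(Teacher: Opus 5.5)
Your generic construction ($n=u$, $m=4$, $h=2$, classical $D$, and $C_0,C_1$ taken from a basis extension, Lemma~\ref{lem:4.3} or Lemma~\ref{lem:4.4}) is exactly the paper's Cases 1--3. The obstacle you flag is real. Lemmas~\ref{lem:3.5} and~\ref{lem:3.6}, as stated, only produce values $\equiv 2 \pmod u$. The paper's proof passes over this point: it asserts the full intervals directly, which tacitly treats the count as a $u$-fold sumset, as Lemma~\ref{lem:4.2} does explicitly.

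Your fix is the right one, but it has to be made in the construction, not in the bookkeeping. In Construction~\ref{con:1} the inputs are attached to transversals, and every symbol meets every transversal once. So all $X_e$ are copies of one another, and no choice of the $B_q$ can make them differ. Instead, let the inserted square depend on the cell:
\begin{itemize}
\item put $|a_{i,j}\rangle\otimes_+ B_{i,j}$ in each cell of the first $n-h$ transversals;
\item put $|a_{i,j}\rangle\otimes_r C_{i,j}$ in each cell of transversal $n-h+r$, and take the blocks of row and column $mn+r$ from the same $C_{i,j}$ (a transversal meets each row and column once).
\end{itemize}
Every orthogonality check either stays inside one cell's square, as in the original proof, or compares entries from two different cells of one block row or column. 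Such entries are orthogonal: their first $mn$ coordinates carry different classical $|e\rangle$, and their last $h$ coordinates are $\mathbf{0}_h$ or multiples of distinct $|r\rangle$.

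Now choose each symbol's $n-h$ squares $B$ and $h$ squares $C$ separately, applying Lemma~\ref{lem:4.3} or~\ref{lem:4.4} with $Y$ the union of that symbol's $B$'s. The argument of Lemma~\ref{lem:3.5} then gives $|M|=\sum_e t_e+2$, where $t_e$ is the per-symbol value $c_0+c_1-c_2-1$ or $c_0+c_1-c_2$. For $u\ge 5$, $u\ne 6$, Case~1 yields $[4u+2,16u(u-2)+2]\setminus\{4u+3\}$, and your overlap argument goes through. The same cell-wise change, $M_{i,j}=|a_{i,j}\rangle\otimes B_{i,j}$, is needed in Construction~\ref{con:2} wherever per-symbol counts are added.

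What is genuinely missing is the value $(4u+2)^2-1$. No singular direct product with $h=2$ reaches it. The $2\times 2$ corner is a $\text{QLS}(2)$, which has only two distinct vectors, so $|M|\le (4u+2)^2-2$; your Case~3 attains exactly this. The maximal-cardinality result you cite supplies only $(4u+2)^2$.

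The paper adds a Case~4 for this: Construction~\ref{con:2} with $m=2$, $n=2u+1$, using maximal-cardinality $\text{QLS}(2u+1)$s that meet in $0$, $1$ or $2$ vectors (Corollaries~\ref{cor:2.7}, \ref{cor:2.11}, \ref{cor:2.12}). Literally this yields $2|B_0\cup B_1|$, which is even, so the odd value again needs the cell-wise version. Give symbol $0$ two maximal squares sharing one vector and symbol $1$ two disjoint ones, for $(2(2u+1)^2-1)+2(2u+1)^2=(4u+2)^2-1$.

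Your small orders also need more than parameter choices:
\begin{itemize}
\item For $u=2$ there is no Construction~\ref{con:1} with $n=2$, since an $\text{LS}(2)$ has no transversal. The paper uses Construction~\ref{con:2} with $m=2$, $n=5$ and Lemma~\ref{lem:4.20}.
\item For order $14$, with $(n,m,h)=(3,4,2)$ and the available inputs (basis extension, Lemmas~\ref{lem:4.3}, \ref{lem:4.4}, \ref{lem:4.21}), every odd per-symbol contribution $t_e$ is at least $15$. So $17,19,21,23$ are unreachable. The paper brings in the $\text{QLS}(7)$s of Example~\ref{ex:2} through a $2\times 7$ direct product.
\item For order $26$ your route $n=m=5$, $h=1$ works, but Tables~\ref{tab:1} and~\ref{tab:2} concern $\text{IQLS}(5,1)$s. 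With $m=5$ you need $\text{IQLS}(6,1)$s: the basis extension ($c_1=6$, $c_2=5$) and Lemma~\ref{lem:4.21} ($c_1=35$). These give $[26,501]\setminus\{27\}$ and $[201,676]\setminus\{202\}$.
\item Citing Theorem~\ref{th:1.1}(2) for order $18=6\cdot 3$ is fine.
\end{itemize}
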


\begin{proof}
For \(u\ge 5\) and \(u\ne 6\), there exists a \(\text{QLS}(u)\) \(A\) containing \(u\) pairwise disjoint transversals. By Lemma~\ref{lem:4.1}, for any \(c_0\in [4,\,16(u-2)]\setminus\{5\}\), there exist \(u-2\) \(\text{QLS}(4)\text{s}\) \(B_0,\ldots,B_{u-3}\) such that \(\left|\bigcup_{q=0}^{u-3}B_q\right|=c_0\). Let \(D\) be a classical \(\text{QLS}(2)\). Applying Construction~\ref{con:1} with \(n=u\), \(m=4\), and \(h=2\), it remains to choose the two required  \(\text{IQLS}(5,1)\text{s}\) \(C_0\) and \(C_1\).

\textbf{Case 1.}
Let \(C_0=C_1\) be obtained by extending a basis contained in some \(B_q\), together with \(|4\rangle\).  Thus Lemma~\ref{lem:3.5} applies with \(c_1=5\), \(c_2=4\), and \(c_3=2\), and all cardinalities \(c\in [4u+2,\,16u(u-2)+2]\setminus\{4u+3\}\) are attainable by \(\text{QLS}(4u+2)\text{s}\).

\textbf{Case 2.}
By Lemma~\ref{lem:4.3}, we may choose \(C_0\) and \(C_1\) such that \(|4\rangle\in C_0\), \(|4\rangle\in C_1\), \(\left|\bigcup_{r=0}^{1}C_r\right|=24\), and \(\left(\bigcup_{r=0}^{1}C_r\right)\cap \left(\bigcup_{q=0}^{u-3}B_q^+\right)=\emptyset\). Thus Lemma~\ref{lem:3.5} applies with \(c_1=24\), \(c_2=0\), and \(c_3=2\), and all cardinalities \(c\in [27u+2,\,16u^2-9u+2]\setminus\{27u+3\}\) are attainable.

\textbf{Case 3.}
By Lemma~\ref{lem:4.4}, we may choose \(C_0\) and \(C_1\) such that \(|4\rangle\notin \bigcup_{r=0}^{1}C_r\), \(\left|\bigcup_{r=0}^{1}C_r\right|=48\), and \(\left(\bigcup_{r=0}^{1}C_r\right)\cap \left(\bigcup_{q=0}^{u-3}B_q^+\right)=\emptyset\). Thus Lemma~\ref{lem:3.6} applies with \(c_1=48\), \(c_2=0\), and \(c_3=2\), and all cardinalities \(c\in [52u+2,\,(4u+2)^2-2]\setminus\{52u+3\}\) are attainable.

Combining the above three cases, every integer \(c\in [4u+2,\,(4u+2)^2-2]\setminus\{4u+3\}\) is attainable as the cardinality of a \(\text{QLS}(4u+2)\). The cardinalities \(\{(4u+2)^2-1,\,(4u+2)^2\}\) are handled in Case~4.

\textbf{Case 4.}
Since there exists a \(\text{QLS}(2u+1)\) of maximal cardinality \((2u+1)^2\), by Corollaries~\ref{cor:2.7}, \ref{cor:2.11}, and~\ref{cor:2.12}, there exist two \(\text{QLS}(2u+1)\text{s}\) of cardinality \((2u+1)^2\) with intersection size in \(\{0,1,2\}\). Hence the number of distinct elements in the  two \(\text{QLS}(2u+1)\text{s}\) ranges over \([2(2u+1)^2-2,\,2(2u+1)^2]\). Since \(4u+2=2(2u+1)\), Construction~\ref{con:2} with \(m=2\) and \(n=2u+1\) yields \(\text{QLS}(4u+2)\text{s}\) with cardinalities \(c\in [(4u+2)^2-4,\,(4u+2)^2]\).

Therefore, all cardinalities \(c\in [4u+2,\,(4u+2)^2]\setminus\{4u+3\}\) are attainable by \(\text{QLS}(4u+2)\text{s}\) for \(u\ge 5\), \(u\ne 6\). The cases \(u\in\{2,3,4,6\}\) follow from Table~\ref{tab:4u+2}.
\end{proof}

\begin{table}[htbp]
\centering
{\footnotesize
\setlength{\tabcolsep}{3pt}
\caption{Cardinalities of $\text{QLS}(4u+2)$s for $u \in \{2,3,4,6\}$}
\label{tab:4u+2}
\begin{tabular}{
c c
@{\hspace{5pt}}
p{3.5cm}
@{\hspace{3pt}}
p{6.0cm}
@{\hspace{38pt}}   
c
}

\toprule
$u$ & $4u+2$ & Constructions & Input designs & Cardinalities \\
\midrule

\multirow{2}{*}{$2$} & \multirow{2}{*}{$10$}
&
\begin{tabular}[t]{@{}l@{}}
Construction~\ref{con:2}\\
$m=2,n=5$
\end{tabular}
&
\begin{tabular}[t]{@{}l@{}}
Lemma~\ref{lem:4.20},  two  \(\text{QLS}(5)\text{s}\)  with\\ $c\in [5,50]\setminus\{6,20,38,39\}$.
\end{tabular}
&
$[10,100]\setminus\{11\}$ \\

\midrule

\multirow{2}{*}{$3$} & \multirow{2}{*}{$14$}
&
\begin{tabular}[t]{@{}l@{}}
Construction~\ref{con:2},\\
$m=2,n=7$
\end{tabular}
&
\begin{tabular}[t]{@{}l@{}}
Example~\ref{ex:2}, $\text{QLS}(7)\text{s}$ admit cardinalities\\ $\{7,9,10\}$; Corollaries~\ref{cor:2.7},   \ref{cor:2.11}, \ref{cor:2.12}, two \\ \(\text{QLS}(7)\text{s}\) with \(c\in[7,20]\setminus\{8,11\}\).
\end{tabular}
&
$[14,40]\setminus\{15\}$ \\
\cmidrule(lr){3-5}

& &
\begin{tabular}[t]{@{}l@{}}
Lemma~\ref{lem:3.5},\\
$n=3,m=4,h=2$
\end{tabular}
&
\begin{tabular}[t]{@{}l@{}}
$c_0\in \{4,6,8,16\}$;
Lemma~\ref{lem:4.3}, $c_1\in \{9,11,12\}$,\\$c_2=0;\, c_3=2$.
\end{tabular}
&
$ \left[38,83\right]\setminus\{ 39\} $
\\

\cmidrule(lr){3-5}

& &
\begin{tabular}[t]{@{}l@{}}
Lemma~\ref{lem:3.6},\\
$n=3,m=4,h=2$
\end{tabular}
&
\begin{tabular}[t]{@{}l@{}}
$c_0\in \{4,6,8,16\}$;
Lemma~\ref{lem:4.4}, $c_1\in T_2$,\,$ c_2=0;$\\$c_3=2$.
\end{tabular}
&
$[74,194]\setminus\{75\}$
\\
\cmidrule(lr){3-5}

& &
\begin{tabular}[t]{@{}l@{}}
Construction~\ref{con:2},\\
$m=2,n=7$
\end{tabular}
&
\begin{tabular}[t]{@{}l@{}}
\(\text{QLS}(7)\text{s}\) admit  cardinality \(49\);
Corollaries~\ref{cor:2.7},\\   \ref{cor:2.11},
 \ref{cor:2.12}, two   \(\text{QLS}(7)\text{s}\) with  $c\in [96,98]$.
\end{tabular}
&
$[192,196]$ \\
\midrule

\multirow{2}{*}{$4$} & \multirow{2}{*}{$18$}
&
\begin{tabular}[t]{@{}l@{}}
Lemma~\ref{lem:3.5},\\
$n=4,m=4,h=2$
\end{tabular}
&
\begin{tabular}[t]{@{}l@{}}
Lemma~\ref{lem:4.1}, $c_0\in [4,32]\setminus\{5,17,26\}$;
$c_1=5,$\\ $c_2=4, c_3=2$.
\end{tabular}
&
$[18,130]\setminus\{19\}$ \\

\cmidrule(lr){3-5}

& &
\begin{tabular}[t]{@{}l@{}}
Lemma~\ref{lem:3.6},\\
$n=4,m=4,h=2$
\end{tabular}
&
\begin{tabular}[t]{@{}l@{}}
Lemma~\ref{lem:4.1}, $c_0\in [4,32]\setminus\{5,17,26\}$;\\
Lemma~\ref{lem:4.4}, $c_1 \in T_2,\, c_2=0; c_3=2$.
\end{tabular}
&
$[98,322]\setminus\{99\}$ \\

\cmidrule(lr){3-5}

& &
\begin{tabular}[t]{@{}l@{}}
Construction~\ref{con:2},\\
$m=2,n=9$
\end{tabular}
&
\begin{tabular}[t]{@{}l@{}}
\(\text{QLS}(9)\text{s}\) admit  cardinality \(81\);
Corollaries~\ref{cor:2.7},\\   \ref{cor:2.11},  \ref{cor:2.12},  two   \(\text{QLS}(9)\text{s}\) with $c\in [160,162]$.
\end{tabular}
&
$[320,324]$ \\
\midrule

\multirow{2}{*}{$6$} & \multirow{2}{*}{$26$}
&
\begin{tabular}[t]{@{}l@{}}
Construction~\ref{con:2}\\
$m=2,n=13$
\end{tabular}
&
\begin{tabular}[t]{@{}l@{}}
Lemma~\ref{lem:4.5}, $\text{QLS}(13)\text{s}$ admit cardinalities\\ $[13,169] \setminus\{14\}$; Corollaries~\ref{cor:2.7}, two  \(\text{QLS}(13)\text{s}\)\\ with \(c\in [13,338]\setminus\{14\}\).
\end{tabular}
&
$[26,676]\setminus\{27\}$ \\

\bottomrule
\end{tabular}
} 
\end{table}

\begin{lemma}\label{lem:4.7}
For  $u \ge 2, u\neq 4$ and  every integer \(c \in [4u+7,\,(4u+7)^{2}] \setminus \{ 4u+8 \}\), there exists a $\text{QLS}(4u+7)$ with cardinality $c$.
\end{lemma}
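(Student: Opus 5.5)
Write $4u+7=4u+3+4$; the natural choice is Construction~\ref{con:1} with $m=4$, $h=7$, $n=u+1$? That gives $4(u+1)+7=4u+11$, which is wrong. Instead I take $m=4$, $n=u$, $h=7$, so $mn+h=4u+7$. This requires $n\ge h$, i.e.\ $u\ge 7$, together with a classical $\text{QLS}(u)$ having $u$ pairwise disjoint transversals, which exists for $u\ne 2,6$. The remaining ingredients are as follows. There are $u-7$ copies of $\text{QLS}(4)$ $B_0,\ldots,B_{u-8}$, whose union cardinality $c_0$ ranges over $[4,16(u-7)]\setminus\{5\}$ by Lemma~\ref{lem:4.1} (for $u-7\ge 3$; smaller $u-7$ uses the two-square set $[4,32]\setminus\{5,17,26\}$ or $\{4,6,8,16\}$). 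There are seven $\text{IQLS}(5,1)$s $C_0,\ldots,C_6$ chosen from Lemma~\ref{lem:4.3} (with $S_7$) or Lemma~\ref{lem:4.4} (with $T_7$), with $Y$ the union of the $B_q$ so that $c_2=0$. Finally $D$ is a $\text{QLS}(7)$ with $c_3\in\{7,9,10,49\}$ (Example~\ref{ex:2} and the maximal case), and for Lemma~\ref{lem:3.5} we rotate $D$ so that it contains $|0\rangle,\ldots,|6\rangle$.

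The cases are then the following. In Case 1, take all $C_r$ equal to an extension of a basis inside some $B_q$ by $|4\rangle$ ($c_1=5$, $c_2=4$), which via Lemma~\ref{lem:3.5} gives $|M|=u c_0+c_3$. With $c_3=7$ this covers $[4u+7,16u(u-7)+7]\setminus\{4u+8\}$ (the gap is at $c_0=5$). In Case 2, use Lemma~\ref{lem:3.5} with $c_1\in S_7$ and $c_2=0$, giving $|M|=u(c_0+c_1-1)+c_3$; since $c_0+c_1$ ranges over a long interval, this fills the middle range. In Case 3, use Lemma~\ref{lem:3.6} with $c_1\in T_7$, giving $|M|=u(c_0+c_1)+c_3$, which reaches up to $u(16(u-7)+168)+c_3=16u^2+56u+c_3$. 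Taking $c_3=49$ yields exactly $(4u+7)^2$. I would vary $c_3\in\{7,9,10,49\}$ to fill residues modulo $u$ and check that consecutive ranges overlap. Each fixed $c_3$ gives a sum set $u\cdot(\text{interval})+c_3$, and these are all multiples of $u$ shifted. This is the real issue: a single $c_3$ only hits one residue class mod $u$.

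I therefore have to reconsider, since the $u$-multiplier is exactly the problem. The cleaner route is to note that Lemma~\ref{lem:3.5}'s formula $n(\cdot)+c_3$ only gives residues $c_3 \bmod n$, whereas in Lemma~\ref{lem:4.5} it worked because $c_0$ itself varies over an interval and $n(c_0+\dots)$ hits everything only after the Lemma~\ref{lem:4.2}-style step. So I would realise the full interval by instead making the $B_q$ contribute via $c_0$ summed per transversal. Indeed the formula $n(c_0+c_1-c_2-1)$ is really a sum over the $n$ symbols $e$, and one can use different families of $B_q$ for different symbols, as in Construction~\ref{con:2} and Lemma~\ref{lem:4.2}. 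Concretely, I would generalize Lemma~\ref{lem:3.5} so that each symbol $e$ receives its own $B_q^{(e)}$, giving $|M|=\sum_e(c_0^{(e)}+c_1-c_2-1)+c_3$, where the $u$-fold sum of $[4,16(u-7)]\setminus\{5\}$ is the full interval $[4u,16u(u-7)]\setminus\{4u+1\}$. Then each of the three cases yields a full interval, and the ranges overlap as in the proof of Lemma~\ref{lem:4.6}.

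The top few values near $(4u+7)^2$ would be obtained by Case 3 with $c_1\in[146,168]\subseteq T_7$ and $c_3=49$. The small $u\in\{2,3,5,6\}$ (orders $15,19,27,31$) fail $u\ge7$ and would be handled by a table. For $15=3\cdot5$ and $27=3\cdot 9$ use Construction~\ref{con:2} with Lemma~\ref{lem:4.20}-type unions. For $19$ and $31$ use Construction~\ref{con:1} with $n=3$ or $n=6$ and $h\le n$ (e.g.\ $19=4\cdot4+3$, $31=4\cdot 7+3$). The main obstacle is exactly the residue and gap bookkeeping: verifying that the per-symbol variation legitimately extends Lemmas~\ref{lem:3.5}/\ref{lem:3.6}, and that the small orders, including the missing $u=4$ case, are fully covered.
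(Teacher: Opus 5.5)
Your generic argument is the paper's: Construction~\ref{con:1} with $m=4$, $n=u$, $h=7$ and the same three cases. The paper fixes $c_1\in\{44,87,130\}$ in Case~2, and $c_1=168$, $c_3=49$ in Case~3. Your residue objection is correct. Read literally, Lemmas~\ref{lem:3.5} and~\ref{lem:3.6} give $|M|\equiv c_3 \pmod u$, so Case~1 only yields the values $uc_0+7$. The paper's intervals tacitly rely on exactly the per-symbol version you propose, both here and in table entries such as $[13,97]\setminus\{14\}$, which is $1$ plus the $3$-fold sumset of $[4,32]\setminus\{5,17,26\}$. That version is valid:
\begin{itemize}
\item entries from cells with different symbols are orthogonal through the first tensor factor;
\item two entries of a common row or column carrying nonzero $\otimes_r$-tails with the same $r$ come from a single cell;
\item the count becomes $\sum_e\bigl(c_0^{(e)}+c_1^{(e)}-c_2^{(e)}-1\bigr)+c_3$.
\end{itemize}
So this step of your plan is sound, and more explicit than the paper.

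The first genuine gap is the range of $u$ you give to the generic construction. You claim it covers all $u\ge 7$, but Lemma~\ref{lem:4.1} needs $u-7\ge 3$. With the paper's parameters, the overlaps between successive ranges ($47u+8\le 16u(u-7)+7$ and $172u+50\le 16u^2+17u+7$) hold only for $u\ge 10$. For $u=7$ one has $n=h$, so there are no $B_q$ at all, and your Case~1 (a basis ``inside some $B_q$'') does not exist. For $u=8,9$ the available $c_0$-sets are $\{4,6,8,16\}$ and $[4,32]\setminus\{5,17,26\}$, and you have not shown that the cases still fit together. The paper handles orders $35$, $39$, $43$ separately in Table~\ref{tab:4u+7}:
\begin{itemize}
\item $35=7\cdot 5$ and $39=3\cdot 13$ by Construction~\ref{con:2}, with Lemmas~\ref{lem:4.20} and~\ref{lem:4.5};
\item $43=3\cdot 14+1$ by Lemmas~\ref{lem:3.5} and~\ref{lem:3.6} with $m=14$, using Lemmas~\ref{lem:4.6} and~\ref{lem:4.21}.
\end{itemize}

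The second gap is the small orders, where some of your suggestions fail outright.
\begin{itemize}
\item Construction~\ref{con:1} needs a Latin square of order $n$ with $n$ disjoint transversals, so $n=6$ is unavailable.
\item In your examples $19=4\cdot 4+3$ and $31=4\cdot 7+3$, the corner square $D$ is a $\text{QLS}(3)$. Since there are no genuinely quantum $\text{QLS}(3)$s, $c_3=3$. Hence $|M|\le 4(16+72)+3=355<19^2$ and $|M|\le 7(64+72)+3=955<31^2$, and the top values are unreachable.
\item The paper uses $n=3$, $h=1$ instead. For $31=3\cdot 10+1$ it uses Lemma~\ref{lem:4.6} and $c_1=120$ from Lemma~\ref{lem:4.21}, which reaches $3(200+120)+1=961$. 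For $19=3\cdot 6+1$ it uses the $\text{QLS}(6)$s of Example~\ref{ex:1} for $[19,55]\setminus\{20\}$, and $\text{QLS}(6)$s of cardinality $36$ with $c_1=48$ for $[355,361]$. The decomposition $19=4\cdot 4+3$ only fills the middle.
\item For $27=3\cdot 9$ via Construction~\ref{con:2}, you would need the cardinality spectrum of (unions of) $\text{QLS}(9)$s. This is not available: $9$ is itself an open order, and only $c\in\{9,81\}$ are at hand. The paper uses $27=5\cdot 5+2$ with Lemmas~\ref{lem:4.20} and~\ref{lem:4.21}, and uses $\text{QLS}(9)$s of cardinality $81$ only for $[723,729]$.
\end{itemize}
Finally, $u=4$ is excluded by hypothesis, so there is no missing case to cover there.
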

\begin{proof}
For \(u\ge 10\), there exists a \(\text{QLS}(u)\) containing \(u\) pairwise disjoint transversals. By Lemma~\ref{lem:4.1}, for any \(c_0\in [4,\,16(u-7)]\setminus\{5\}\), there exist \(u-7\) \(\text{QLS}(4)\text{s}\) \(B_0,\ldots,B_{u-8}\) such that \(\left|\bigcup_{q=0}^{u-8}B_q\right|=c_0\). Applying Construction~\ref{con:1} with \(n=u\), \(m=4\), and \(h=7\),  it remains to choose the required seven \(\text{IQLS}(5,1)\text{s}\) \(C_0,\ldots,C_6\) and a \(\text{QLS}(7)\) \(D\).

\textbf{Case 1.}
Let \(C_0,\ldots,C_6\) be identical \(\text{IQLS}(5,1)\text{s}\) consisting of the extensions of a basis contained in some \(B_q\), together with \(|4\rangle\). Let \(D\) be a classical \(\text{QLS}(7)\). Thus Lemma~\ref{lem:3.5} applies with \(c_1=5\), \(c_2=4\), and \(c_3=7\), and all cardinalities \(c\in [4u+7,\,16u(u-7)+7]\setminus\{4u+8\}\) are attainable by \(\text{QLS}(4u+7)\text{s}\).

\textbf{Case 2.}
By Lemma~\ref{lem:4.3}, for each \(c_1\in\{44,87,130\}\), we may choose \(C_0,\ldots,C_6\) such that \(|4\rangle\in C_r\) for  \(r=0,\ldots,6\), \(\left|\bigcup_{r=0}^{6}C_r\right|=c_1\), and \(\left(\bigcup_{r=0}^{6}C_r\right)\cap \left(\bigcup_{q=0}^{u-8}B_q^+\right)=\emptyset\). Let \(D\) be a classical \(\text{QLS}(7)\). Thus Lemma~\ref{lem:3.5} applies with \(c_1=44\), \(c_2=0\), and \(c_3=7\), and all cardinalities \(c\in [47u+7,\,16u^2-69u+7]\setminus\{47u+8\}\) are attainable by \(\text{QLS}(4u+7)\text{s}\).

Similarly, \(c_1=87\) and \(c_1=130\) yield the attainable cardinalities \(c\in [90u+7,\,16u^2-26u+7]\setminus\{90u+8\}\) and \(c\in [133u+7,\,16u^2+17u+7]\setminus\{133u+8\}\), respectively.

\textbf{Case 3.}
By Lemma~\ref{lem:4.4}, we may choose \(C_0,\ldots,C_6\) such that \(|4\rangle\notin C_r\) for \(0\le r\le 6\), \(\left|\bigcup_{r=0}^{6}C_r\right|=168\), and \(\left(\bigcup_{r=0}^{6}C_r\right)\cap \left(\bigcup_{q=0}^{u-8}B_q^+\right)=\emptyset\). Let \(D\) be a \(\text{QLS}(7)\) with \(|D|=49\). Thus Lemma~\ref{lem:3.6} applies with \(c_1=168\), \(c_2=0\), and \(c_3=49\), and all cardinalities \(c\in [172u+49,\,(4u+7)^2]\setminus\{172u+50\}\) are attainable by \(\text{QLS}(4u+7)\text{s}\).

Therefore, for \(u\ge 10\), all cardinalities \(c\in [4u+7,\,(4u+7)^2]\setminus\{4u+8\}\) are attainable by \(\text{QLS}(4u+7)\text{s}\). The cases \(u\in [2,9]\setminus\{4\}\) follow from Table~\ref{tab:4u+7}.
\end{proof}

\begin{table}[htbp]
\centering
{\footnotesize
\setlength{\tabcolsep}{3pt}
\caption{Cardinalities of $\text{QLS}(4u+7)$s for $u\in [2,9]\setminus\{4\}$}
\label{tab:4u+7}
\begin{tabular}{
c c
@{\hspace{5pt}}
p{3.5cm}
@{\hspace{3pt}}
p{6.0cm}
@{\hspace{38pt}}   
c
}

\toprule
$u$ & $4u+7$ & Constructions & Input designs & Cardinalities \\
\midrule

\multirow{2}{*}{$2$} & \multirow{2}{*}{$15$}
&
\begin{tabular}[t]{@{}l@{}}
Construction~\ref{con:2}\\
$m=3,n=5$
\end{tabular}
&
\begin{tabular}[t]{@{}l@{}}
Lemma~\ref{lem:4.20},  three  \(\text{QLS}(5)\text{s}\)  with\\ $c\in [5,75]\setminus\{6\}$.
\end{tabular}
&
$[15,225]\setminus\{16\}$ \\
\midrule

\multirow{2}{*}{$3$} & \multirow{2}{*}{$19$}
&
\begin{tabular}[t]{@{}l@{}}
Lemma~\ref{lem:3.5},\\
$n=3,m=6,h=1$
\end{tabular}
&
\begin{tabular}[t]{@{}l@{}}
Example~\ref{ex:1}, $\text{QLS}(6)\text{s}$ admit cardinalities\\ $\{6,8,9\}$; Corollaries~\ref{cor:2.7},   \ref{cor:2.11}, \ref{cor:2.12}, two  \(\text{QLS}(6)\text{s}\)\\ with  $c_0\in [6,18]\setminus\{7\}$; $c_1=7$,
$ c_2=6, c_3=1$.
\end{tabular}
&
$[19,55]\setminus\{20\}$ \\

\cmidrule(lr){3-5}

& &
\begin{tabular}[t]{@{}l@{}}
Lemma~\ref{lem:3.5},\\
$n=4,m=4,h=3$
\end{tabular}
&
\begin{tabular}[t]{@{}l@{}}
$c_0\in  \{4,6,8,16\}$;
Lemma~\ref{lem:4.3}, $c_1\in \{9,11,12\}$,\\$c_2=0; c_3=3$.

\end{tabular}
&
$ [51,111]\setminus\{52\}$
 \\

\cmidrule(lr){3-5}

& &
\begin{tabular}[t]{@{}l@{}}
Lemma~\ref{lem:3.6},\\
$n=4,m=4,h=3$
\end{tabular}
&
\begin{tabular}[t]{@{}l@{}}
$c_0\in  \{4,6,8,16\}$;
Lemma~\ref{lem:4.4}, $c_1 \in T_3,\, c_2=0$;\\$c_3=3$.
\end{tabular}
&
$[99,355]\setminus\{100\}$ \\

\cmidrule(lr){3-5}

& &
\begin{tabular}[t]{@{}l@{}}
Lemma~\ref{lem:3.6},\\
$n=3,m=6,h=1$
\end{tabular}
&
\begin{tabular}[t]{@{}l@{}}
\(\text{QLS}(6)\text{s}\) admit  cardinality \(36\);
Corollaries~\ref{cor:2.7},\\   \ref{cor:2.11},  \ref{cor:2.12},  two   \(\text{QLS}(6)\text{s}\) with $c_0\in [70,72]$;\\
Lemma~\ref{lem:4.21}, $c_1 =48, c_2=0$; $c_3=1$.
\end{tabular}
&
$[355,361]$ \\

\midrule

\multirow{2}{*}{$5$} & \multirow{2}{*}{$27$}
&
\begin{tabular}[t]{@{}l@{}}
Lemma~\ref{lem:3.5},\\
$n=5,m=5,h=2$
\end{tabular}
&
\begin{tabular}[t]{@{}l@{}}
 Lemma~\ref{lem:4.20},  $c_0\in [5,75]\setminus\{6\}$;\\
$c_1=6,  c_2=5$; $c_3=2$.
\end{tabular}
&
$[27,377]\setminus\{28\}$ \\

\cmidrule(lr){3-5}

& &
\begin{tabular}[t]{@{}l@{}}
Lemma~\ref{lem:3.6},\\
$n=5,m=5,h=2$
\end{tabular}
&
\begin{tabular}[t]{@{}l@{}}
 Lemma~\ref{lem:4.20}, $c_0\in [5,75]\setminus\{6\}$;\\
Lemma~\ref{lem:4.21}, $c_1 \in \{35,70\}, c_2=0$; $c_3=2$.
\end{tabular}
&
$[202,727]\setminus\{203\}$ \\

\cmidrule(lr){3-5}

& &
\begin{tabular}[t]{@{}l@{}}
Construction~\ref{con:2},\\
$m=3,n=9$
\end{tabular}
&
\begin{tabular}[t]{@{}l@{}}
\(\text{QLS}(9)\text{s}\) admit  cardinality \(81\);
Corollaries~\ref{cor:2.7},\\   \ref{cor:2.11},  \ref{cor:2.12},  three  \(\text{QLS}(9)\text{s}\) with    $c\in [241,243]$.
\end{tabular}
&
$[723,729]$ \\
\midrule

\multirow{2}{*}{$6$} & \multirow{2}{*}{$31$}
&
\begin{tabular}[t]{@{}l@{}}
Lemma~\ref{lem:3.5},\\
$n=3,m=10,h=1$
\end{tabular}
&
\begin{tabular}[t]{@{}l@{}}
Lemma~\ref{lem:4.6}, $\text{QLS}(10)\text{s}$ admit cardinalities\\ $[10,200]\setminus\{11\}$; Corollaries~\ref{cor:2.7}, two  \(\text{QLS}(10)\text{s}\)\\ with   $c_0\in [10,200]\setminus\{11\}$;
$c_1=11, c_2=10$, $c_3=1$.
\end{tabular}
&
$[31,601]\setminus\{32\}$ \\

\cmidrule(lr){3-5}

& &
\begin{tabular}[t]{@{}l@{}}
Lemma~\ref{lem:3.6},\\
$n=3,m=10,h=1$
\end{tabular}
&
\begin{tabular}[t]{@{}l@{}}
 Lemma~\ref{lem:4.6} and Corollary~\ref{cor:2.7}, \\
 $c_0\in [10,200]\setminus\{11\}$;\\
Lemma~\ref{lem:4.21}, $c_1=120, c_2=0$; $c_3=1$.
\end{tabular}
&
$[391,961]\setminus\{392\}$ \\

\midrule

\multirow{2}{*}{$7$} & \multirow{2}{*}{$35$}
&
\begin{tabular}[t]{@{}l@{}}
Construction~\ref{con:2}\\
$m=7,n=5$
\end{tabular}
&
\begin{tabular}[t]{@{}l@{}}
 Lemma~\ref{lem:4.20},  seven  \(\text{QLS}(5)\text{s}\)  with\\ $c\in [5,175]\setminus\{6\}$.
\end{tabular}
&
$[35,1225]\setminus\{16\}$ \\
\midrule

\multirow{2}{*}{$8$} & \multirow{2}{*}{$39$}
&
\begin{tabular}[t]{@{}l@{}}
Construction~\ref{con:2}\\
$m=3,n=13$
\end{tabular}
&
\begin{tabular}[t]{@{}l@{}}
Lemma~\ref{lem:4.5}, $\text{QLS}(13)\text{s}$ admit cardinalities\\ $[13,169] \setminus\{14\}$; Corollaries~\ref{cor:2.7}, three  \(\text{QLS}(13)\text{s}\)\\ with  $c\in [13,507]\setminus\{14\}$.
\end{tabular}
&
$[39,1521]\setminus\{40\}$ \\
\midrule

\multirow{2}{*}{$9$} & \multirow{2}{*}{$43$}
&
\begin{tabular}[t]{@{}l@{}}
Lemma~\ref{lem:3.5},\\
$n=3,m=14,h=1$
\end{tabular}
&
\begin{tabular}[t]{@{}l@{}}
Lemma~\ref{lem:4.6}, $\text{QLS}(14)\text{s}$ admit cardinalities\\ $[14,196]\setminus\{15\}$; Corollaries~\ref{cor:2.7}, two  \(\text{QLS}(14)\text{s}\)\\ with   $c_0\in [14,392]\setminus\{15 \}$;
$c_1=15,c_2=14$, $c_3=1$.
\end{tabular}
&
$[43,1177]\setminus\{44\}$ \\

\cmidrule(lr){3-5}

& &
\begin{tabular}[t]{@{}l@{}}
Lemma~\ref{lem:3.6},\\
$n=3,m=14,h=1$
\end{tabular}
&
\begin{tabular}[t]{@{}l@{}}
Lemma~\ref{lem:4.6} and   Corollary~\ref{cor:2.7},\\  $c_0\in [14,392]\setminus\{15 \}$;\\
Lemma~\ref{lem:4.21}, $c_1=224, c_2=0$; $c_3=1$.
\end{tabular}
&
$[715,1849]\setminus\{716\}$ \\

\bottomrule
\end{tabular}
} 
\end{table}

\noindent\textit{\textbf{Proof of Theorem~\ref{th:1.2}.}}
By   Lemmas~\ref{lem:4.2}, \ref{lem:4.5}--\ref{lem:4.7}, for integers \( v \ge 8 \), \( v \ne 9,11 ,23\), and \( c \in [v, v^{2}] \setminus \{v+1\}\),
there exists a \(\text{QLS}(v)\) with cardinality \( c \).

\section{Concluding remarks}
In this paper, we investigate the possible cardinalities of the $\text{QLS}(v)$. By extending the classical singular direct product construction and direct product construction to the quantum setting, we show that for any integer \(v \ge 8\) with \(v \ne 9,11,23\), a $\text{QLS}(v)$ exists for every cardinality \(c \in [v, v^{2}] \setminus \{ v+1 \}\).

Quantum Latin squares can also be constructed by several other methods, including those based on \(m \times n\) row-quantum Latin rectangles \cite{Zhang1} and complex Hadamard matrices  \cite{Mu2}, in addition to the singular direct product and direct product  constructions presented in this paper. Since genuinely quantum Latin squares of order $2$ or $3$ cannot exist \cite{Paczos}, the possible cardinalities of quantum Latin squares of order $4$ are $4$, $6$, $8$, and $16$ \cite{Zhang1}, and those of order $5$ are $5$, $7$, $12$, $21$, $24$, and $25$ \cite{Zhang4}. Thus, the problem of possible cardinalities for quantum Latin squares remains open only for orders $v=6,7,9,11$, and $23$. Table~\ref{tab:5} lists the attainable and uncertain cardinalities of $\text{QLS}(v)$ for these values of $v$.

\begin{table}[H]
\scriptsize
\centering
\caption{Attainable and uncertain cardinalities of a \(\text{QLS}(v)\) for \(v=6,7,9,11,23\)}
\label{tab:5}

\setlength{\tabcolsep}{3pt}

\begin{tabular}{|c|p{7.7cm}|p{6.3cm}|}
\hline
\(v\) & Attainable cardinalities & Uncertain cardinalities \\
\hline

\(6\) &
\([6,22]\setminus\{7\}\cup\{24,26,28,30,31,33,34,36\}\) &
\(\{23,25,27,29,32,35\}\) \\
\hline

\(7\) &
\([7,23]\setminus\{8\}\cup\{28,43,44,46,47,48,49\}\) &
\([24,27]\cup[29,42]\cup\{45\}\) \\
\hline

\(9\) &
\([9,59]\setminus\{10,34\}\cup\{63,64,72,75,76,77,78,79,80,81\}\) &
\([60,62]\cup[65,71]\cup\{34,73,74\}\) \\
\hline

\(11\) &
$[20,53]\setminus\{21,48,50\}\cup\{11,105,111,112\} \cup[116,121]\setminus\{119\}$ &
$[13,19]\cup[54,115]\cup\{ 21,48,50,105,111,112,119\}$ \\
\hline

\(23\) &
\([23,529]\setminus\{24,528\}\) &
\(\{528\}\) \\
\hline
\end{tabular}
\end{table}

\noindent{\bf Acknowledgments} The authors would like to acknowledge Prof. Lie Zhu (Suzhou University) for his helpful discussions during the preparation of this manuscript.  H. Cao's research was supported by the National Natural Science Foundation of China (Grants No. 12471313 and No. 12071226).

\end{document}